\documentclass[11pt]{article}
\usepackage[left=3.2cm,right=3.2cm,top=3.5cm,bottom=3cm]{geometry}
\usepackage[utf8]{inputenc}
\usepackage[english]{babel}
\usepackage{amsmath}
\usepackage{amssymb}
\usepackage{amsthm}
\usepackage{newpxtext,newpxmath}
\usepackage{braket}
\usepackage{amscd}
\usepackage{setspace}
\usepackage{booktabs}
\usepackage{lscape}
\usepackage{rotating}
\usepackage{graphicx}
\usepackage{tikz}
\usepackage{xcolor}
\usepackage{mathtools}
\usepackage{sidecap}
\usepackage{emptypage}
\usepackage{enumitem}
\usepackage{todonotes}
\newtheorem{theorem}{Theorem}
\newtheorem*{theorem*}{Theorem}

\newtheorem{lemma}{Lemma}
\newtheorem{proposition}{Proposition}
\newtheorem{corollary}{Corollary}
\newtheorem*{corollary*}{Corollary}

\newtheorem*{conjecture*}{Conjecture}

\newtheorem*{result*}{Result}
\theoremstyle{remark}
\newtheorem{remark}{Remark}
\theoremstyle{definition}

\newtheorem{example}{Example}
\newtheorem*{example*}{Example}

\newtheorem*{Ack}{Acknowledments}

\newcommand{\Z}{\mathbb{Z}}
\newcommand{\C}{\mathbb{C}}
\newcommand{\Q}{\mathbb{Q}}
\newcommand{\R}{\mathbb{R}}
\newcommand{\F}{\mathbb{F}}
\newcommand{\pro}{\mathbb{P}}
\newcommand{\Po}{\mathbb{H}}
\newcommand{\OO}{\mathcal{O}}
\newcommand{\fa}{\mathfrak{a}}

\newcommand{\fp}{\mathfrak{p}}

\newcommand{\Spec}{\mathrm{Spec}}

\newcommand{\SL}{\mathrm{SL}}

\newcommand{\ord}{\mathrm{ord}}

\newcommand{\A}{\mathcal{A}}
\newcommand{\V}{\mathbb{V}}
\newcommand{\Lo}{\mathbb{L}}

\def\={\;=\;}
\def\:={\;:=\;}
\def\+{\;+\;}
\def\-{\;-\;}
\title{Integrality of Picard-Fuchs differential equations of Kobayashi geodesics and applications}
\author{Gabriele Bogo}
\date{}
\begin{document}
\maketitle 

\begin{abstract}
We prove that the holomorphic solutions of Picard–Fuchs differential equations associated with 1-parameter families of abelian varieties with real multiplication admit power series expansions with $S$-integral coefficients at a maximal unipotent monodromy point. This extends classical integrality results for hypergeometric functions and Bouw–Möller’s work on Teichmüller curves. The integral solutions are related to the non-ordinary locus of the modulo~$p$ reduction of the family, whose cardinality we bound in terms of the Euler characteristic and Lyapunov exponents of the base curve. In some cases, the non-ordinary locus can be recovered by truncating the integral solutions, as in Igusa’s classical observation for the Legendre family. We also establish $S$-integrality of expansions of modular forms at cusps in terms of a modular function for (not necessarily arithmetic) Fuchsian groups with modular embeddings, and deduce congruences. These results are applied in~\cite{BL2} to construct lifts of partial Hasse invariants for rational curves in Hilbert modular varieties.
\end{abstract}

Interest in linear differential equations on complex domains that admit power series solutions with integral expansion coefficients spread after R.\,Apéry's proof in 1978 of the irrationality of the zeta value~$\zeta(3)$.
Following this discovery, D. Zagier conducted an extensive numerical exploration~\cite{ZagierA}, investigating for which quadratic polynomials~$P(t)$ and constant~$\lambda$, called accessory parameter, the differential equation
\begin{equation}
\label{eq:diff1}
\bigl(tP(t)y'(t)\bigr)'+(t-\lambda)y(t) = 0
\end{equation}
admits a solution at~$t = 0$ of the form~$y(t) = 1+\sum_{n=1}^\infty y_n t^n$ with~$y_n \in \mathbb{Z}$ for all~$n \ge 1$.
Zagier found essentially only six pairs $(P(t),\lambda)$ with this property. In all these cases, the solution~$y(t)$ is~\emph{modular}, that is, a modular form~$f$ expressed locally in terms of a modular function~$t$ (a Hauptmodul in Zagier's cases) for the monodromy group~$\Gamma\subset\SL_2(\R)$. In other words, these differential equations are~\emph{uniformizing differential equations} of the Riemann surface~$\Po/\Gamma$ in the sense of Poincaré (see Section~\ref{sec:unif} for more details).
Based on his computer search, Zagier proposed the following still open conjecture. 
\begin{conjecture*}[Zagier~\cite{ZagierA}]
Any integral solution of the differential equation~\eqref{eq:diff1}, where~$P(t)$ is a non-degenerate quadratic polynomial, is modular. 
\end{conjecture*}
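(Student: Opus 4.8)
The plan is to decompose the statement into an \emph{arithmetic} step, which extracts $p$-adic structure from the hypothesis of integrality, and a \emph{geometric} step, which converts that structure into modularity, the bridge being the theory of globally nilpotent operators. Throughout, write $Ly = (tP(t)\,y')' + (t-\lambda)\,y$ for the operator in~\eqref{eq:diff1}. By non-degeneracy of $P$ this is a second-order Fuchsian operator whose singular locus is $t=0$, the two distinct roots of $P$, and $\infty$, and the indicial computation at $t=0$ (double exponent $0$) shows that this is a point of maximal unipotent monodromy; hence the hypothesized integral solution $y(t) = 1+\sum_{n\ge1}y_n t^n$ with $y_n\in\Z$ spans the one-dimensional space of holomorphic solutions there, while the second solution has a logarithm.

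The first step is to show that integrality forces $L$ to be a $G$-operator, hence \emph{globally nilpotent}: its reduction modulo $p$ has nilpotent $p$-curvature for almost all primes $p$. Indeed, an $S$-integral power series with positive radius of convergence satisfying a linear ODE is a $G$-function, so the Chudnovsky--Andr\'e theorem on the minimal operator annihilating a $G$-function applies and yields global nilpotence of $L$; Dwork's congruences give the same conclusion more explicitly in this rank-two situation. Here I would also bring in the finer, paper-specific input: following the mechanism behind Igusa's observation recalled in the abstract, for each good prime $p$ the truncation of $y$ modulo $(p,t^{p})$ is a natural candidate for a partial Hasse invariant, cutting out a candidate non-ordinary locus whose cardinality is constrained by the Euler characteristic and Lyapunov bounds established in the paper. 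Integrality thus produces, uniformly in $p$, a compatible family of mod-$p$ structures of exactly the type carried by the reduction of a family of abelian varieties with real multiplication.

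The heart of the argument, and the expected main obstacle, is the converse passage from this arithmetic data to modularity. The strategy is to classify the globally nilpotent rank-two Fuchsian operators sharing the singularity type of $L$ and to show that each is the Picard--Fuchs operator of a one-parameter family of abelian varieties with real multiplication, i.e.\ a Kobayashi geodesic of the kind studied here; its monodromy is then a lattice $\Gamma\subset\SL_2(\R)$ admitting a modular embedding, the inverse of the developing map is a Hauptmodul, and $L$ is the corresponding uniformizing operator in the sense of Section~\ref{sec:unif}. The difficulty is that, for the fixed singularity data, global nilpotence alone constrains the accessory parameter $\lambda$ only to a \emph{finite} set, and proving that this finite set consists exactly of the geometric (hence modular) values requires excluding globally nilpotent operators not of geometric origin. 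This is precisely an instance of the Grothendieck--Katz $p$-curvature conjecture, reinforced by an Andr\'e--Oort--type rigidity for the resulting variation of Hodge structure; it is this rigidity of the accessory parameter --- the structural reason behind Zagier's finding of only finitely many integral cases --- that I do not expect the arithmetic input to settle on its own, and which leaves the conjecture open.

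Granting the classification, the conclusion is immediate: $\Gamma$ acts on $\Po$, the modular function $t$ is the Hauptmodul of $\Po/\Gamma$, and the integral solution $y$ is the expansion at the associated cusp of a modular form for $\Gamma$ pulled back along $t$, which is exactly the assertion that $y$ is modular. As a final consistency check closing the loop with the hypothesis, I would invoke the $S$-integrality-of-expansions results of the paper to verify that the modular form so produced has the integral $q$-expansion at the cusp that the integrality of $(y_n)$ predicts, and that its mod-$p$ reductions recover the partial Hasse invariants of the second step.
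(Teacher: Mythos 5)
There is a genuine gap here, but it is one you have honestly flagged yourself: the statement you were given is Zagier's conjecture, which the paper explicitly records as \emph{still open} and makes no attempt to prove. The paper's actual theorems go in the \emph{converse} direction: starting from a differential equation already known to be of geometric/modular origin (the uniformizing Picard--Fuchs equation of a Kobayashi geodesic), they establish integrality of its holomorphic solution. Your proposal attempts the forward direction (integrality $\Rightarrow$ modularity), and the step you identify as the heart of the matter --- showing that the finitely many globally nilpotent operators with the given singularity data and accessory parameter are exactly the ones of geometric, hence modular, origin --- is precisely the open problem, an instance of Grothendieck--Katz--type rigidity that neither the Chudnovsky--Andr\'e theory nor anything in this paper resolves. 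So the proposal is a reasonable sketch of the expected strategy, together with a correct admission that it does not close; it is not a proof, and no proof exists in the paper to compare it against.

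One further caution: your attempt to import the paper's machinery (truncations as partial Hasse invariants, the Lyapunov-exponent bounds on the non-ordinary locus, the structure theory of twisted modular forms) as ``finer input'' toward the arithmetic step is circular in this context. All of those results \emph{presuppose} that the operator is the Picard--Fuchs operator of a family of abelian varieties with real multiplication over a Kobayashi geodesic; they cannot be used to manufacture such a family from the bare hypothesis that $(y_n)\subset\Z$. The only legitimately available arithmetic consequence of integrality is global nilpotence (your first step), and the gap between ``globally nilpotent'' and ``of geometric origin, and moreover uniformizing'' is exactly where the conjecture lives.
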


In~\cite{BM}, Bouw and Möller disproved a related conjecture of D. and~G.\,Chudnovsky, by presenting an explicit differential equation with integral solution in~$\mathcal{O}_{\Q(\sqrt{13})}[\frac{1}{2}][\![t]\!]$ of shape similar to~\eqref{eq:diff1} with~$P(t)$ of degree three. Their example is particularly interesting because, while not discussing it directly, it sheds new light on Zagier's conjecture. The monodromy of the Bouw-Möller differential equation is a~\emph{non-arithmetic} Fuchsian group, and the solution of their differential equation is both integral and modular. 
Although not explicitly stated, the integrality in Zagier's conjecture should arise from the integrality of the~$q$-expansion of the modular solution, a property not generally expected for modular forms on non-arithmetic groups. Consequently, one may still expect Zagier's conjecture to hold, but in a wider sense, including examples beyond congruence (or arithmetic) Fuchsian groups (from now on we consider Zagier's question for polynomials~$P(t)$ in~\eqref{eq:diff1} of any degree, and differential operators of more general form, see~Section~\ref{sec:unif}). 
The Bouw-Möller example was revisited by Möller and Zagier in~\cite{MZ}, where the surprising integrality was re-proved by embedding the non-arithmetic monodromy group into a Hilbert modular group (a procedure known as~\emph{modular embedding}) and, roughly, by realizing the modular solution as a restriction of a Hilbert modular form with integral coefficients. 

The Bouw-Möller differential equation, as well as those considered by Zagier, are~\emph{Picard-Fuchs differential equations}, meaning that their holomorphic solution is a period function of a family of algebraic varieties varying over the curve~$\Po/\Gamma$. In Zagier's case the varieties are elliptic curves, and in the Bouw-Möller case they are abelian surfaces. While uniformizing differential equations are notoriously difficult to study (one must solve the~\emph{accessory parameter problem}, see~\cite{B1} and~\cite{B2}), Picard-Fuchs differential equations are, due to their geometric origin, easier to understand. 
This motivates the study of second-order differential equations on~$\pro^1(\C)$ that are simultaneously uniformizing and arise as Picard–Fuchs equations of more general families of abelian varieties. 
Restricting to affine base curves, a beautiful result of Möller-Viehweg~\cite{MV} shows that this occurs precisely for the uniformizing differential equations of genus zero non-compact curves lying in a Hilbert modular variety. Such curves are called (genus zero) affine~\emph{Kobayashi geodesics}, and another result in~\cite{MV} implies that they admit an integral model over~$\mathcal{O}_K[S^{-1}]$, for~$K$ a number field and~$S$ a finite set of primes. 

Relevant examples of affine Kobayashi geodesics include modular curves and Teichmüller curves, including curves uniformized by triangle groups, whose Picard-Fuchs differential equations are classical hypergeometric equations. 
As in the Bouw-Möller example, the uniformizing Fuchsian group of a Kobayashi geodesic is in general non-arithmetic; that is, it is not obtained via the usual construction from a quaternion algebra.

The first goal of this paper is to prove a result that complements Zagier's conjecture. We prove that any uniformizing differential equation that is also a Picard-Fuchs equation for a family of abelian varieties admits an integral solution. 
\begin{theorem*}[Theorem~\ref{thm:main}]
Let~$L$ be the uniformizing differential operator of a genus zero affine Kobayashi curve defined over~$\mathcal{O}_K[S^{-1}]$, and let~$y(t)=1+\sum_{n=1}^\infty{y_nt^n}$ be the normalized holomorphic solution at a maximal unipotent monodromy point. Then the coefficients~$\{y_n\}_n$ belong to~$\mathcal{O}_K[S^{-1}]$. 
\end{theorem*}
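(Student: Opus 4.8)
The plan is to reduce the statement to a $\mathfrak{p}$-integrality claim for every prime $\mathfrak{p}$ of $\mathcal{O}_K$ lying over a rational prime $p\notin S$, and to extract that integrality from the Frobenius structure carried by the Picard--Fuchs connection. I would first record the shape of the recursion: since $L$ is defined over $\mathcal{O}_K[S^{-1}]$ and $t=0$ is a maximal unipotent monodromy point, its indicial polynomial there is $\theta^2$ with $\theta=t\,\frac{d}{dt}$, so (after possibly absorbing finitely many primes into the $S$ provided by \cite{MV}) one may write $L=\sum_{i\ge 0}t^iP_i(\theta)$ with $P_0(\theta)=\theta^2$ and $P_i(\theta)\in\mathcal{O}_K[S^{-1}][\theta]$. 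The normalization $y_0=1$ then determines the coefficients by
\[
n^2\,y_n \= -\sum_{i\ge 1}P_i(n-i)\,y_{n-i}.
\]
In particular $y_n\in K$ a priori, and the sole obstruction to integrality is the repeated division by $n^2$; the entire content of the theorem is that the resulting denominators stay supported on $S$.

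\emph{(Geometric input.)} By Möller--Viehweg \cite{MV} the Kobayashi geodesic underlies a one-parameter family $\pi\colon\mathcal{A}\to U$ of abelian varieties with real multiplication by the ring of integers $\mathcal{O}_F$ of a totally real field, with a model over $\mathcal{O}_K[S^{-1}]$ having good reduction outside $S$ and the singular fibres. Passing to a finite extension $K'/K$ that splits $F$, the relative de Rham cohomology $H^1_{\mathrm{dR}}(\mathcal{A}/U)$ becomes a locally free module with an integral Gauss--Manin connection, and the $\mathcal{O}_F$-action decomposes it into rank-two eigenpieces. The modular embedding defining the geodesic singles out one embedding $\sigma\colon F\hookrightarrow\R$, and by the Möller--Viehweg identification of uniformizing and Picard--Fuchs equations the $\sigma$-eigenpiece $(\mathcal{H},\nabla)$ is exactly the rank-two module whose cyclic presentation at $t=0$ is $L$; the holomorphic solution $y(t)$ is the period of a generator of the Hodge line $\mathrm{Fil}^1\mathcal{H}$ against a flat cycle. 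Integrality over $\mathcal{O}_{K'}[S'^{-1}]$, combined with $y_n\in K$, will descend to $\mathcal{O}_K[S^{-1}]$ by intersecting the local rings.

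\emph{(Frobenius and Dwork congruences.)} For each $\mathfrak p\nmid S$, crystalline cohomology of the reduction of $\mathcal{A}\to U$ endows $(\mathcal{H},\nabla)$ with a horizontal Frobenius $F$, making it a rank-two $F$-crystal. On the ordinary locus $F$ has slopes $0$ and $1$ and determines a unit-root subcrystal of rank one; the maximal unipotent monodromy at $t=0$ forces the reduction there to be totally degenerate, so that near $t=0$ the holomorphic solution $y(t)$ coincides with the period of the unit-root line. Dwork's theory of the unit-root Frobenius then yields a relation
\[
y(t)\= U(t)\,y(t^p), \qquad U(t)\in\mathcal{O}_{\mathfrak p}[\![t]\!]^{\times},
\]
with $\mathcal{O}_{\mathfrak p}$ the completion at $\mathfrak p$; comparing coefficients and using $y_0=1$ gives $y_n\in\mathcal{O}_{\mathfrak p}$ for all $n$ by induction, despite the divisions by $n^2$. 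Running over all $\mathfrak p\nmid S$ and descending yields $y_n\in\mathcal{O}_K[S^{-1}]$.

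\emph{(Main obstacle.)} The crux is the Frobenius step. One must (i) verify that the unit-root structure genuinely descends to the rank-two eigenpiece $\mathcal{H}$, and not merely to the full $H^1_{\mathrm{dR}}$; and (ii) control the \emph{non-ordinary locus}, where the two slopes of $F$ collide and the unit-root decomposition degenerates, so that the congruences must first be established on the ordinary locus and then propagated to $t=0$ by $p$-adic continuity across the degenerate fibre. This is the same non-ordinary locus whose cardinality is bounded elsewhere in the paper, and keeping its interaction with the totally degenerate fibre at the MUM point under control — equivalently, upgrading a weak Frobenius structure to one strong enough for Dwork's congruences on a boundary annulus at $t=0$ — is the technical heart of the argument. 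A less crystalline alternative, realizing $y(t)$ as the restriction to the geodesic of a Hilbert modular form and invoking $S$-integrality of its Fourier expansion at the cusps in the spirit of Möller--Zagier \cite{MZ}, would bypass (i)--(ii), but would instead require the change of variable between the Hauptmodul $t$ and the Hilbert modular coordinates to itself be $S$-integral.
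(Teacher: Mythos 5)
Your proposal has the right skeleton (reduce to $\mathfrak{p}$-integrality prime by prime, use the Möller--Viehweg geometric input, and extract integrality from a Frobenius structure on the rank-two eigenpiece), and in spirit it is close to what the paper does. But the central step is false as stated, and it is precisely at your ``obstacle (i)''. You assert a Dwork-type functional equation $y(t)=U(t)\,y(t^p)$ with $U\in\mathcal{O}_{\mathfrak p}[\![t]\!]^{\times}$, relating $y$ to \emph{itself}. This presupposes that the crystalline Frobenius (equivalently, the Cartier operator) stabilizes the individual rank-two eigenpiece $\mathcal{H}$ singled out by the real multiplication. It does not: with respect to the idempotent decomposition of $\mathcal{O}_F\otimes W(\mathbb{F})$, Frobenius permutes the embeddings cyclically, so the Cartier operator sends the $(l,i)$-eigendifferential to a multiple of the $(l,i+1)$-eigendifferential; a given eigenpiece is Frobenius-stable only when the corresponding prime of $F$ above $p$ has residue degree one. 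For non-split primes the correct relation couples \emph{different} Picard--Fuchs solutions, $y_j(t)^N\equiv\alpha_{p,j}(t)\,y_{j'}(t^p)^N\bmod p$ with $j'\neq j$ in general --- this is exactly the content of Proposition~\ref{prop:cong} and Corollary~\ref{cor:cong} in the paper, and is visible in the inert example $y_1(t)^2\equiv(1-t)\,y_2(t^{13})^2\bmod 13$. Consequently your induction, which feeds integrality of the low-order coefficients of $y$ back into the \emph{same} series, does not run. It can be repaired --- either iterate Frobenius $f$ times ($f$ the residue degree) to relate $y_j(t)$ to $y_j(t^{p^f})$, or work with the full $g\times g$ Frobenius matrix as the paper does --- but the statement you rely on must be restructured, not merely ``verified''.

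Even granting that fix, the integrality of the Frobenius entry $U(t)$ in the $t$-coordinate at the cusp (your obstacle (ii)) is the entire technical content of the theorem, and your sketch defers it. The paper never establishes an exact unit-root functional equation; instead it uses Katz's expansion-coefficient theorem~\cite{K84} to manufacture \emph{polynomial} solutions of $L_jv\equiv0\bmod p^m$ (entries of iterated Cartier matrices, nonzero mod $p$ by the block structure forced by real multiplication), proves their degrees grow like $p^{\lceil m/2\rceil}$ (Lemma~\ref{lem:deg}), and then invokes uniqueness of the Frobenius-method recursion (Lemma~\ref{lem:rec}) to conclude that the normalized approximate solutions converge $p$-adically to the normalized holomorphic solution; this sidesteps any delicate analysis on a boundary annulus at the MUM point. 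Finally, your argument ignores elliptic points entirely: when $Y$ has orbifold points the Frobenius entry acquires fractional exponents $(t-t(\tau_0))^{\epsilon/n_0}$ there, which is why the paper enlarges $S$ by the primes dividing the orders of the elliptic points and works with $N$-th powers; without that normalization the claimed membership $U\in\mathcal{O}_{\mathfrak p}[\![t]\!]$ needs a separate justification.
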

A stronger statement holds. 
As explained in Section~\ref{sec:PF}, a Kobayashi geodesic~$Y$ gives rise to~$g$ second-order Picard–Fuchs equations, where~$g$ is the dimension of the ambient Hilbert modular variety. Theorem~\ref{thm:main} establishes integrality for the solutions of all these differential equations.

The strategy of the proof of Theorem~\ref{thm:main} generalizes the approach of Bouw–Möller~\cite{BM}, which was carried out for explicit differential equations arising from certain Teichmüller curves in Hilbert modular surfaces. It consists in lifting polynomial solutions of differential equations defined modulo~$p^n$, obtained via Katz's theory of expansion coefficients~\cite{K84}.
The proof in the general case requires a detailed study of the differential equations satisfied by modular forms (Section~\ref{sec:modde}), which is of independent interest, as well as an analysis of the geometry of Hilbert modular varieties in positive characteristic (Sections~\ref{sec:HMV} and~\ref{sec:CHMV}).

A first consequence of Theorem~\ref{thm:main} concerns modular forms. We establish an integral structure on the space of modular forms for non-cocompact genus-zero Fuchsian groups with a modular embedding, by considering certain expansions at the cusps.
The main novelty of this integrality result lies in the possible non-arithmetic nature of the Fuchsian groups, whose theory of modular forms is not covered by classical results. Modular forms on non-arithmetic groups are generally not expected to have algebraic Fourier coefficients at the cusps. In some examples, such as triangle curves and certain Teichmüller curves discussed in~\cite{Wo} and~\cite{MZ} respectively, the~$q$-expansion of modular forms at the cusps has a transcendental radius of convergence~$A$, and the Fourier coefficients contain a transcendental factor. Scaling the~$q$-parameter to~$Q = A \cdot q$ yields algebraic coefficients, but they are generally not~$p$-integral for a given prime~$p$.

Fix a cusp and let~$t$ be a modular function with a simple zero at that cusp, which we take as a local parameter. Then every modular form~$f$ admits a local expansion as a power series in~$t$, called the~\emph{$t$-expansion of~$f$} at the cusp.
\begin{theorem*}[Theorem~\ref{thm:intmf}]
Let~$\Gamma$ be the uniformizing group of a genus-zero affine Kobayashi geodesic admitting a model over~$\mathcal{O}_K[S^{-1}]$, and let~$t$ be a suitable Hauptmodul. Then, for every weight~$k\ge0$, the space of modular forms~$M_k(\Gamma)$ admits a basis consisting of forms whose~$t$-expansions are $p$-integral for almost all primes~$p\not\in S$.
\end{theorem*}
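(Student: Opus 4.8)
The plan is to deduce Theorem~\ref{thm:intmf} from the integrality of Picard--Fuchs solutions established in Theorem~\ref{thm:main}. The key structural fact is that the modular forms on~$\Gamma$ are governed by the uniformizing differential operator~$L$: the normalized holomorphic solution~$y(t)=1+\sum_{n\ge1}y_nt^n$ at the maximal unipotent point is itself a modular form of some weight~$k_0$ (expressed in the local parameter~$t$), and all other modular forms are obtained from~$y$ and~$t$ by algebraic and differential operations. Concretely, I would first recall from Section~\ref{sec:modde} the relationship between the weight~$k$ space~$M_k(\Gamma)$ and solutions of~$L$ and its symmetric powers: since~$L$ is second order, the symmetric power~$\mathrm{Sym}^k L$ is an operator of order~$k+1$ whose holomorphic solution space at the cusp is spanned by the products~$y_i\cdot\tilde y^{\,k-i}$ formed from a basis~$\{y,\tilde y\}$ of solutions of~$L$, and these products, read in the~$t$-coordinate, correspond precisely to the~$t$-expansions of weight~$k$ modular forms.

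Next I would construct the asserted integral basis explicitly. The local ring of the Hauptmodul~$t$ at the chosen cusp is~$\mathcal{O}_K[S^{-1}][\![t]\!]$ by the integral model hypothesis, and by Theorem~\ref{thm:main} the coefficients of~$y(t)$ lie in~$\mathcal{O}_K[S^{-1}]$. The point is that~$y(t)^k$ gives a weight~$kk_0$ form with~$p$-integral~$t$-expansion for all~$p\notin S$, and more generally, by applying to~$y$ the Ramanujan--Serre-type derivations~$\vartheta$ that raise weight by~$2$ (whose action on~$t$-expansions is, up to the integral factor coming from~$dt$, given by~$t\,\tfrac{d}{dt}$), one produces a family of forms~$\vartheta^j y$ of weights~$k_0+2j$ whose~$t$-expansions remain~$p$-integral, since~$t\tfrac{d}{dt}$ preserves~$\mathcal{O}_K[S^{-1}][\![t]\!]$. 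Together with the fact that~$t$ itself has~$p$-integral~$t$-expansion (trivially, being the coordinate), these forms generate, over~$\mathcal{O}_K[S^{-1}]$, a full-rank subspace of each~$M_k(\Gamma)$; one then verifies by a dimension count, using the known dimension formula for~$M_k(\Gamma)$ on a genus-zero curve with the prescribed cusp data, that for almost all~$p$ these generators span, so that a basis with~$p$-integral~$t$-expansions exists.

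The main obstacle, which I expect to occupy the bulk of the argument, is controlling the transition between the transcendental~$q$-expansion picture and the algebraic~$t$-expansion picture while keeping track of~$p$-integrality, together with the weight-versus-order bookkeeping. Specifically, one must show that the weight-raising derivation~$\vartheta$ really does act integrally on~$t$-expansions: the naive operator~$t\,d/dt$ differs from the correct Serre derivative by a term involving a weight-two form (the logarithmic derivative of~$t$ with respect to the uniformizer, essentially~$\tfrac{dt}{t}$ against the invariant differential), and one must ensure this correction term is itself~$p$-integral in its~$t$-expansion for almost all~$p$. This reduces to the~$p$-integrality of the~$t$-expansion of the weight-two form~$t\,y'(t)/y(t)$, or equivalently of the logarithmic derivative of the period, which follows from Theorem~\ref{thm:main} together with the invertibility of~$y(t)$ in~$\mathcal{O}_K[S^{-1}][\![t]\!]$ (guaranteed by~$y(0)=1$). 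Once this integral weight-one-step is secured, the passage to higher weights is formal, and the finitely many excluded primes are exactly those dividing the denominators appearing in the finite dimension formulas and in the leading coefficients of the structure relations among the generators.
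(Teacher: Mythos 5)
Your high-level strategy — deduce Theorem~\ref{thm:intmf} from Theorem~\ref{thm:main} by manufacturing all modular forms out of the integral period $y$ and the Hauptmodul $t$ — is indeed the paper's strategy, but the single step that carries all the content has a genuine gap in your write-up. Every weight-raising operation on $t$-expansions carries a factor of $t'=dt/d\tau$: in the $t$-coordinate one has $d/d\tau=t'\,d/dt$, and the weight-two correction term of any Serre-type derivative is $t'\cdot\frac{d}{dt}\log f_0$ for some auxiliary form $f_0$, not $t\,\frac{dy}{dt}/y$. So the integrality you actually need is that of the $t$-expansion of $t'$ itself (equivalently of $t'/t$), and this does \emph{not} follow from ``Theorem~\ref{thm:main} plus invertibility of $y$'': the two objects you declare equivalent — the logarithmic derivative of $t$ with respect to the uniformizer, and $t\,y'(t)/y(t)$ with $y'=dy/dt$ — are different functions (and if you instead meant $y'=dy/d\tau$, the reduction is circular, since that quantity already contains $t'$). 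The correct relation is the one the paper extracts in Lemma~\ref{lem:int1} from the Rankin--Cohen identities~\eqref{eq:rc}--\eqref{eq:demo}, equivalently from the Wronskian of $L_1$: one has $\frac{d}{dt}\log\bigl(t'/y^2\bigr)=A_1(t)=\sum_i a_i/(t-t_i)$, whence $t'=C\,t\,y(t)^2\prod_{t_i\neq0}(t-t_i)^{a_i}$. Integrality of $t'$ therefore requires, besides Theorem~\ref{thm:main}, control of the algebraic factor $\prod_{t_i\neq 0}(t-t_i)^{a_i}$, whose exponents have denominators at the orders of the elliptic points (Lemma~\ref{lem:Rs}) and whose binomial expansions introduce denominators at those primes, at $2$, and at the singular points $t_i$; this is exactly why the paper's statement holds over the enlarged set $S_\Gamma$ (elliptic orders, $2$, numerator of $\chi$), and why your closing remark that the excluded primes are ``exactly those dividing the denominators in the dimension formulas'' cannot be substantiated without this computation.

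Two secondary points. First, your structural claim about symmetric powers is off: the solution space of $\mathrm{Sym}^kL$ is $\{y^kP(\tau):\deg P\le k\}$, whereas a weight-$k$ modular form is $y^k$ times a \emph{rational function of $t$}; such a form satisfies an order-$(k{+}1)$ equation only projectively/rationally equivalent to $\mathrm{Sym}^kL$, so ``solutions of $\mathrm{Sym}^kL$ = $t$-expansions of weight-$k$ forms'' is false as stated. Second, in the presence of elliptic points the ring $M_*(\Gamma)$ is not spanned by $y$, its derivatives, and polynomials in $t$ with trivial multiplier system; the paper's proof of Theorem~\ref{thm:intmf} instead writes down explicit generators $Q_1,\dots,Q_r$ as fractional powers of rational expressions in $t$ and $t'$ (with multiplier systems), and it is precisely in these fractional powers that the integrality of $t'$ and the extra primes of $S_\Gamma$ are consumed; a vague ``dimension count'' does not replace this, since it must detect which forms vanish to fractional orders at elliptic points.
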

The theorem holds more generally for the larger ring of~\emph{twisted modular forms} (definition in Section~\ref{sec:modde}), a natural generalization of modular forms introduced by Möller and Zagier~\cite{MZ}. These are complex-valued holomorphic functions on~$\Po$ with a multi-weight~$(k_1,\dots,k_g)$, whose modular transformation takes into account the modular embedding of~$\Gamma$ in a Hilbert modular group.
Using Theorem~\ref{thm:main} and the description of the differential equations in terms of Rankin–Cohen brackets, we prove that the (meromorphic) modular form~$t'$ has integral~$t$-expansion. In genus zero, all modular forms are algebraic functions of a Hauptmodul and its first derivative; this implies Theorem~\ref{thm:intmf} for classical modular forms.
To prove the full version of Theorem~\ref{thm:intmf}, the structure of the space of twisted modular forms as a ring of Laurent polynomials over~$M_*(\Gamma)$ is determined in Proposition~\ref{prop:gen}, a result of independent interest.

Although the~$t$-expansion of twisted modular forms may seem exotic, it turns out to be quite useful. As a corollary of the above theorems, certain twisted modular forms satisfy congruences.
\begin{corollary*}[Corollary~\ref{cor:cong} and Remark~\ref{rmk:cong}]
Let~$Y = \Po/\Gamma$ be a genus-zero affine Kobayashi geodesic in a~$g$-dimensional Hilbert modular variety with real multiplication field~$F$, admitting an integral model over~$\mathcal{O}_K[S^{-1}]$. Let~$t$ be a suitable Hauptmodul for~$\Gamma$.
For every prime~$p\not\in S$, there exist non-constant twisted modular forms~$f_{p,1},\dots,f_{p,g}$, polynomials~$\alpha_{p,1}(t),\dots,\alpha_{p,g}(t)\in\mathcal{O}_K[S^{-1}][t]$, and a permutation~$\mathscr{l}_p$ of the set~$\{1,\dots,g\}$, such that for every~$j\in\{1,\dots,g\}$
\[
f_{p,j}(t)\equiv\alpha_{p,j}(t)\cdot f_{p,\mathscr{l}_p(j)}(t^p)\mod p
\]
holds for the~$t$-expansion at a cusp. 
\end{corollary*}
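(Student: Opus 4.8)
The plan is to read the congruence as the reduction modulo $p$ of the crystalline Frobenius structure on the relative de Rham cohomology of the universal family over the Möller–Viehweg integral model of $Y$, with the integrality coming from Theorem~\ref{thm:main}. Recall from Section~\ref{sec:PF} that $Y$ carries $g$ second-order Picard–Fuchs equations $L_1,\dots,L_g$, indexed by the real embeddings $\sigma_j\colon F\emb\R$, and let $y_j(t)=1+\sum_{n\ge1}y_{j,n}t^n$ be the normalized holomorphic solution of $L_j$ at the cusp; by Theorem~\ref{thm:main}, $y_{j,n}\in\mathcal{O}_K[S^{-1}]$. I would take $f_{p,j}$ to be (a normalization of) the twisted modular form whose $t$-expansion is $y_j$, which is non-constant since $y_{j,1}\neq0$. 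The permutation $\mathscr{l}_p$ will be the one induced on $\{1,\dots,g\}$ by the $p$-power Frobenius acting on the factors of $\mathcal{O}_F\otimes\overline{\F}_p\cong\prod_{j=1}^g\overline{\F}_p$, equivalently $\sigma_j\mapsto\mathrm{Frob}_p\circ\sigma_j$; for $p\notin S$ unramified in $F$ this is a genuine permutation of the embeddings.

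The main input is the behaviour of Frobenius modulo $p$. On the ordinary locus of the reduced model, the real multiplication by $\mathcal{O}_F$ splits $H^1_{\mathrm{dR}}$ of the universal family into $g$ rank-two isotypic pieces $H_j$, each carrying a unit-root line $U_j$ stable under the absolute Frobenius $F$. Since $F$ is $\mathcal{O}_F$-semilinear and $\mathrm{Frob}_p$ permutes the idempotents of $\mathcal{O}_F\otimes\overline{\F}_p$ by $\mathscr{l}_p$, the Frobenius carries $\phi^*U_{\mathscr{l}_p(j)}$ isomorphically onto $U_j$, where $\phi$ denotes the absolute Frobenius of the base acting on the Hauptmodul by $t\mapsto t^p$; the scalar of this isomorphism in the period frame is, modulo $p$, the $j$-th partial Hasse invariant (in the sense of the forms lifted in~\cite{BL2}). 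The reduction of the period $y_j$ trivializes $U_j$, so comparing the two trivializations through $F$ yields an identity $y_j\equiv\alpha_{p,j}\cdot\phi^*y_{\mathscr{l}_p(j)}\pmod p$, with $\alpha_{p,j}$ the $t$-expansion of that partial Hasse invariant. As $\phi$ acts by $t\mapsto t^p$, its effect on $t$-expansions is $f_{p,\mathscr{l}_p(j)}(t)\mapsto f_{p,\mathscr{l}_p(j)}(t^p)$, and one obtains $f_{p,j}(t)\equiv\alpha_{p,j}(t)\,f_{p,\mathscr{l}_p(j)}(t^p)\pmod p$.

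It then remains to see that $\alpha_{p,j}$ is a polynomial with coefficients in $\mathcal{O}_K[S^{-1}]$. Comparing coefficients of $t^N$, with $N=N_0+pN_1$ and $0\le N_0<p$, converts the congruence into the Dwork-type relation $y_{j,N}\equiv y_{j,N_0}\,y_{\mathscr{l}_p(j),N_1}\pmod p$; taking $N<p$ shows $\alpha_{p,j}(t)\equiv\sum_{m=0}^{p-1}y_{j,m}t^m\pmod p$, the degree-$(p-1)$ truncation of $y_j$. Polynomiality and the bound $\deg\alpha_{p,j}<p$ follow, while integrality of the coefficients is exactly Theorem~\ref{thm:main} (and Theorem~\ref{thm:intmf} for the full twisted version, using Proposition~\ref{prop:gen} to realize the $f_{p,j}$ as twisted modular forms with integral $t$-expansions). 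For $g=1$ the permutation is trivial and this recovers the classical Igusa/Dwork congruence recalled in the introduction, with $\alpha_{p,1}$ the truncated hypergeometric period, i.e. the Hasse polynomial.

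The step I expect to be hardest is globalizing the Frobenius identity, which is produced a priori only on the ordinary locus, into an identity of $t$-expansions holding across the non-ordinary locus and at the cusp. Concretely, one must extend the unit-root trivialization and its Frobenius comparison over the (finite) non-ordinary locus, where the $\alpha_{p,j}$ vanish but remain holomorphic, and check that the multiplier is a genuine holomorphic twisted modular form of finite weight rather than a mere power series; this is where the genus-zero hypothesis and the integral boundary behaviour furnished by Theorem~\ref{thm:intmf} are indispensable, and it is precisely the link to the non-ordinary locus emphasized in the introduction. A secondary subtlety is verifying that a single permutation $\mathscr{l}_p$ simultaneously governs all $g$ components compatibly, which reduces to the fact that $\mathrm{Frob}_p$ acts on $\mathcal{O}_F\otimes\overline{\F}_p$ as one permutation of its factors.
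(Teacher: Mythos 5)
Your geometric picture --- the congruence as the mod~$p$ shadow of a Frobenius structure, with the permutation induced by $\mathrm{Frob}_p$ on the factors of $\mathcal{O}_F\otimes\overline{\F}_p$ --- matches the paper's, but two steps in your execution fail. First, the polynomiality argument is circular and its conclusion is false. A priori the ratio $y_j(t)/y_{\mathscr{l}_p(j)}(t^p)\bmod p$ is only a power series; to ``compare coefficients of $t^N$'' and extract the Dwork-type relation you must already know that $\alpha_{p,j}$ is a polynomial of degree $<p$, which is exactly what you are trying to prove. Worse, the bound $\deg\alpha_{p,j}<p$ is simply wrong in general: Theorem~\ref{thm:deg} computes $\deg\alpha_{p,j}=\tfrac{-\chi(Y)}{2}\,N\,(p\lambda_{j'}-\lambda_j)$, which for $\Po/\Delta(2,5,\infty)$ and $p$ inert equals $(3p-1)/2>p$ when $j=2$; this is precisely why Corollary~\ref{cor:calc} must \emph{assume} $\deg\alpha_{p,j}<p$, and why the identification of $\alpha_{p,j}$ with the truncation $[y_j]_p$ is a separate theorem under that hypothesis, not an ingredient of the congruence itself. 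Second, your choice of $f_{p,j}$ as the form whose $t$-expansion is $y_j$ itself cannot satisfy the statement when $Y$ has elliptic points: the paper's proof of Corollary~\ref{cor:cong} shows that the multiplier acquires factors $(t(\tau_0)-t)^{\epsilon_\fp(j,\tau_0)/n_0}$ with \emph{fractional} exponents at elliptic points, hence is an algebraic function rather than a polynomial. One must raise to the power $N=\mathrm{lcm}$ of the orders of the elliptic points, i.e.\ take $f_{p,j}$ to be a suitable power of the form attached to $y_j$ (Corollary~\ref{cor:cong} and Remark~\ref{rmk:cong}); this is where the hypotheses on $S$ and the ``suitable Hauptmodul'' enter.

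The globalization problem you flag as hardest is real, and your proposal does not close it: the unit-root line exists only over the ordinary locus, and Theorem~\ref{thm:intmf} (which is downstream of Theorem~\ref{thm:main} and concerns integrality of $t$-expansions) says nothing about extending a Frobenius comparison across non-ordinary points. The paper avoids the problem altogether by never using the unit-root filtration: it works with Katz's expansion coefficients, whose matrices $E(\eta;m)$ reduce mod~$p$ to the matrices of iterates of the \emph{Cartier operator} (Lemma~\ref{lem:Em}), an object defined on the whole reduced family regardless of ordinarity. The approximate solutions $y_j^{[m]}$ mod~$p^m$ from the proof of Theorem~\ref{thm:main} are literally entries $\widehat{E}(\omega;m)_{\nu_j,j}$ (Proposition~\ref{prop:cong}), and the global matrix congruence $\widehat{E}(\omega;m+1)\equiv\widehat{E}(\omega;m)^{(p)}\widehat{E}(\omega;1)\bmod p$ yields $y_j^{[m+1]}(t)\equiv y_{j'}^{[m]}(t^p)\cdot\widehat{E}(\omega;1)_{\nu_j,j}(t)\bmod p$ directly, with no extension step; polynomiality of $\alpha_{p,j}$ is then read off from geometry (finiteness of the non-ordinary intersection, reducedness of the non-ordinary divisor by~\cite{AG}, non-vanishing at cusps, and the elliptic-point factors absorbed by the $N$-th power). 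If you want to salvage your crystalline formulation, the repair is to replace the unit-root line by the Cartier/Hasse--Witt matrix from the start, which is in effect what the paper does.
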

The permutation depends only on the splitting of~$p$ in~$F$: for instance, it is the identity if~$p$ splits completely in~$F$. 

These congruences for modular forms arise from congruences between the integral solutions of the Picard–Fuchs differential equations, and are more naturally formulated in that setting. They are new even for Gauss’s hypergeometric functions: concrete examples, related to the Kobayashi curve~$\Po/\Delta(2,5,\infty)$, are 
\[
\begin{aligned}
{}_2F_1\Bigl(\frac{3}{20}, \frac{7}{20};1;t\Bigr)^2&\;\equiv\; (1-t)\;\cdot\;{}_2F_1\Bigl(\frac{1}{20}, \frac{9}{20};1;t^{13}\Bigr)^2\mod 13\\
{}_2F_1\Bigl(\frac{1}{20}, \frac{9}{20};1;t\Bigr)^2&\;\equiv\; (1-t)(3-t)^2\;\cdot\; {}_2F_1\Bigl(\frac{3}{20},\frac{7}{20};1;t^{13}\Bigr)^2\mod 13\,.
\end{aligned}
\]

The corollary and Theorem~\ref{thm:intmf} are used in~\cite{BL2} to construct lifts of~\emph{partial Hasse invariants} for the Kobayashi curve~$Y$, realized as twisted modular forms in characteristic zero. These lifts correspond to the restriction to the modulo~$p$ reduction of~$Y$ of the partial Hasse invariants of the ambient Hilbert modular variety in characteristic~$p$. The results are also applied in~\cite{BL3} to describe the non-ordinary locus of genus-zero affine Kobayashi geodesics in terms of the zeros of orthogonal (Atkin) polynomials, and in terms of Padé approximants of the logarithmic derivative of the integral solutions.

The second application of Theorem~\ref{thm:main} concerns arithmetic aspects of affine Kobayashi curves. The zeros of the polynomials~$\alpha_{p,1}(t),\dots,\alpha_{p,g}(t)$ in the Corollary describe the intersection of the modulo~$p$ reduction of the Kobayashi curve~$Y$ with the components of the non-ordinary locus of the ambient Hilbert modular variety. Computing the degrees of these polynomials therefore amounts to determining the cardinality of the non-ordinary locus of~$Y$ modulo~$p$. We express these degrees in terms of the (orbifold) Euler characteristic~$\chi(Y)$ and certain invariants~$\lambda_1,\dots,\lambda_g\in(0,1]\cap\Q$ of~$Y$ called \emph{Lyapunov exponents}.

\begin{theorem*}[Corollary~\ref{cor:size}]
Let~$Y$ be a genus-zero affine Kobayashi curve with~$r\ge0$ elliptic points in a~$g$-dimensional Hilbert modular variety with real multiplication field~$F$. Consider the integral model of~$Y$ defined over~$\mathcal{O}_K[S^{-1}]$, let~$p\not\in S$ be a prime, and let~$n_p$ be the number of primes of~$\mathcal{O}_K$ over~$p$. 
The cardinality of the non-ordinary locus of~$Y$ modulo~$p$ is bounded by
\[
\Bigl\lfloor{\frac{-\chi(Y)}{2}\cdot\max_{(j,j')}\{p\lambda_{j'}-\lambda_j\}}\Bigr\rfloor\;\le\;\sharp\mathrm{no}_p^Y\;\le\;n_p\cdot\Bigl\lfloor{\frac{-\chi(Y)}{2}\sum_{(j,j')}{(p\lambda_{j'}-\lambda_{j})}}\Bigr\rfloor+r\,,
\]
where the indices~$j$ and~$j'$ are related as in the corollary and~$\lfloor{x}\rfloor$ denotes the integral part of~$x\in\Q$.
\end{theorem*}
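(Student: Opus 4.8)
The plan is to reduce the count $\sharp\mathrm{no}_p^Y$ to the degrees of the polynomials $\alpha_{p,1}(t),\dots,\alpha_{p,g}(t)$ furnished by Corollary~\ref{cor:cong}, and then to evaluate those degrees through a valence formula for twisted modular forms in which the Lyapunov exponents appear as normalized degrees of Hodge line bundles. As recalled in the paragraph preceding the statement, the zeros of $\alpha_{p,j}$ on the reduction of $Y$ are precisely the points where this reduction meets the $j$-th component of the non-ordinary locus of the ambient Hilbert modular variety, and by the construction of the lifts in~\cite{BL2} the reduction $\alpha_{p,j}\bmod p$ is the restriction to $Y$ of the $j$-th partial Hasse invariant $h_j$. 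Hence the number of non-ordinary points coming from the $j$-th component is $\deg_t\alpha_{p,j}=\deg\bigl(h_j|_Y\bigr)$, up to contributions supported at the cusps and elliptic points, and the whole problem becomes the computation of these degrees.

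To compute $\deg\bigl(h_j|_Y\bigr)$ I would combine the multi-weight of the partial Hasse invariants with the interpretation of the Lyapunov exponents. For the Hilbert modular variety with real multiplication field $F$, the $j$-th partial Hasse invariant has multi-weight $p\,e_{j'}-e_j$, where $j'=\mathscr{l}_p(j)$ is the image of $j$ under the permutation of the embeddings of $F$ induced by Frobenius, so that $j'$ depends only on the splitting of $p$ in $F$. Thus $h_j$ is a section of $\mathcal{L}_{j'}^{\otimes p}\otimes\mathcal{L}_j^{\otimes(-1)}$, where $\mathcal{L}_1,\dots,\mathcal{L}_g$ are the Hodge line bundles. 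The Lyapunov exponents enter exactly as the normalized degrees of these bundles along $Y$, namely $\deg\bigl(\mathcal{L}_i|_Y\bigr)=\tfrac{-\chi(Y)}{2}\lambda_i$; this is the characterization that forces the $\lambda_i$ to be rational for Kobayashi geodesics. Combining, $\deg\bigl(h_j|_Y\bigr)=\tfrac{-\chi(Y)}{2}\bigl(p\lambda_{j'}-\lambda_j\bigr)$, up to the same boundary corrections.

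With the degrees in hand the two inequalities are formal. Every zero of a single $\alpha_{p,j}$ lies in the non-ordinary locus, so $\sharp\mathrm{no}_p^Y$ is at least the largest of these degrees, which after rounding gives the lower bound $\bigl\lfloor\tfrac{-\chi(Y)}{2}\max_{(j,j')}(p\lambda_{j'}-\lambda_j)\bigr\rfloor$. For the upper bound, the non-ordinary locus is contained in the union of the zero loci of the $g$ polynomials $\alpha_{p,j}$, taken over the $n_p$ primes of $\mathcal{O}_K$ above $p$ modulo which the reduction and the congruences are formed, together with the at most $r$ elliptic points; summing the degrees and adding the elliptic contribution yields $n_p\bigl\lfloor\tfrac{-\chi(Y)}{2}\sum_{(j,j')}(p\lambda_{j'}-\lambda_j)\bigr\rfloor+r$.

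The main obstacle is the bookkeeping concealed in the phrase ``up to boundary corrections''. Because $\chi(Y)$ is an orbifold Euler characteristic and the $\lambda_i$ are merely rational, the quantity $\tfrac{-\chi(Y)}{2}(p\lambda_{j'}-\lambda_j)$ need not be an integer; the valence formula distributes the missing fractional part over the $r$ elliptic points and the cusps, and the real work is to show that these contributions assemble precisely into the floor function and into the additive term $r$ of the upper bound. One must also check that $\alpha_{p,j}$ does not drop degree modulo $p$, i.e.\ that its leading coefficient is a unit, so that $\deg_t\alpha_{p,j}$ equals the expected degree rather than something smaller, and one must track how the degrees for the individual primes above $p$ aggregate, which is the source of the factor $n_p$ in the upper bound and of its absence in the lower bound. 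Carrying out this rounding analysis uniformly in $p$, while keeping the permutation $\mathscr{l}_p$ and the pairing $(j,j')$ synchronized with the Frobenius action, is the delicate point.
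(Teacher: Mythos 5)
Your skeleton --- reduce $\sharp\mathrm{no}_p^Y$ to the degrees of the polynomials $\alpha_{p,j}$ and then count roots over the primes $\fp\mid p$ --- is exactly the paper's, but your computation of those degrees takes a genuinely different route. The paper proves Theorem~\ref{thm:deg} entirely on the differential-equation side: it re-runs the integrality and congruence arguments at the singular point at infinity, reads off $\deg\alpha_{p,j}=N(p\,s^\infty_{j'}-s^\infty_j)$ by comparing exponents in the congruence, and converts the local exponents into Lyapunov exponents via Fuchs's relation applied to the Riemann scheme of Lemma~\ref{lem:Rs}. You instead identify $\alpha_{p,j}\bmod\fp$ with the restriction to $Y$ of a partial Hasse invariant, i.e.\ a section of $\mathcal{L}_{j'}^{\otimes p}\otimes\mathcal{L}_j^{\otimes(-1)}$, and compute a bundle degree. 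This is more conceptual and explains geometrically why the $\lambda_i$ appear; the paper's route stays inside the machinery it has already built and is needed anyway for Corollary~\ref{cor:calc} and Example~\ref{ex:deltat}. Two caveats on your inputs, though. First, in this paper the $\lambda_j$ are \emph{defined} by~\eqref{eq:lj}, so your characterization $\deg(\mathcal{L}_i|_Y)=\tfrac{-\chi(Y)}{2}\lambda_i$ must be derived from it: this is a short valence-formula argument ($\varphi_j'$ is a section of $\mathcal{L}_1^{\otimes2}\otimes\mathcal{L}_j^{\otimes(-2)}$ whose divisor has degree $(\lambda_j-1)\chi(Y)$, together with $\deg(\mathcal{L}_1|_Y)=-\chi(Y)/2$ from the uniformizing property of $\Lo_1$), but it cannot simply be asserted. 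Second, citing~\cite{BL2} for the identification of $\alpha_{p,j}$ with a partial Hasse invariant is circular, since~\cite{BL2} builds on the present paper; the identification comes from the Cartier-action description on the real-multiplication eigenbasis in Section~\ref{sec:HMV}, i.e.\ from~\cite{GoHasse}.

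The genuine gap is the part you explicitly defer, and it is not routine rounding: it is where the paper's proof does its actual work. The point is that a naive ``number of zeros $\le$ degree'' count does \emph{not} give the stated bounds, because an elliptic zero of $\alpha_{p,j}$ carries fractional normalized multiplicity $\epsilon_\fp(j,e_i)/n_i<1$, so the number of \emph{distinct} zeros of $\alpha_{p,j}$ can exceed $\lfloor\deg\alpha_{p,j}/N\rfloor$. The paper resolves this through the factorization~\eqref{eq:ap}, $\alpha_{p,j}=\hat g_{j,\fp}^{\,N}\cdot\prod_i(t-t(e_i))^{N\epsilon_\fp(j,e_i)/n_i}$, which separates a polynomial $\hat g_{j,\fp}$ of \emph{integer} degree, whose roots are the non-elliptic non-ordinary points (distinct, since the non-ordinary divisor is reduced by~\cite{AG}), from the elliptic contributions. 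Then $\deg\hat g_{j,\fp}\le\tfrac1N\deg\alpha_{p,j}$ forces $\deg\hat g_{j,\fp}\le\bigl\lfloor\tfrac{-\chi(Y)}{2}(p\lambda_{j'}-\lambda_j)\bigr\rfloor$, while the elliptic points contribute at most $r$ points to the non-ordinary locus \emph{in total} --- they are at most $r$ points of $Y$ regardless of the component $j$ or the prime $\fp$ --- which is why $+r$ appears once and is multiplied by neither $g$ nor $n_p$. The same decomposition is what makes your lower bound correct: if an elliptic point is non-ordinary it lowers $\deg\hat g_{j,\fp}$ by less than $1$ while adding one point to the count, so $\sharp\mathrm{no}_p^Y\ge\lfloor\tfrac{-\chi(Y)}{2}(p\lambda_{j'}-\lambda_j)\rfloor$ holds in all cases, not only in the case (implicit in your ``after rounding'') where all elliptic points reduce to ordinary abelian varieties. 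Until this bookkeeping is carried out, the inequalities as stated are not proved. Your worry about $\alpha_{p,j}$ dropping degree, on the other hand, dissolves in either approach: $\alpha_{p,j}$ is intrinsically defined modulo $p$ and its degree is an exact identity (Theorem~\ref{thm:deg}), or equivalently the Hasse invariant restricts to a nonzero section because the generic fiber of $Y$ mod $\fp$ is ordinary.
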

For the special case~$g=2$, the result can be refined to give lower and upper bounds to the size of the supersingular locus of~$Y$ modulo~$p$.

Finally, we present a method to compute the non-ordinary locus via truncation of the integral solutions of the Picard–Fuchs equations. This approach recovers Igusa’s observation~\cite{IgLeg} that Deuring’s Hasse polynomial for elliptic curves is a solution modulo~$p$ of the Picard–Fuchs differential equation of the Legendre family.
\begin{theorem*}[Corollary~\ref{cor:calc}, Example~\ref{ex:deltat}]
Let~$Y$ be as above, and let~$y_1,\dots,y_g$ be the integral solutions of the Picard–Fuchs differential equations associated with~$Y$. For a prime~$p\not\in S$, assume that the polynomials~$\alpha_{p,1}(t),\dots,\alpha_{p,g}(t)$ in the corollary satisfy~$\deg(\alpha_{p,j})<p$. Then there exists an integer~$N\ge 1$, depending on the order of the elliptic points of~$Y$, such that the zeros modulo~$p$ of
\[
\mathrm{lcm}\bigl\{[y_1(t)^N]_p,\dots,[y_g(t)^N]_p\bigr\}\quad\text{and}\quad\mathrm{gcd}\bigl\{[y_1(t)^N]_p,\dots,[y_g(t)^N]_p\bigr\}\,,
\]
where~$[X]_m$ denotes truncation at order~$m$, respectively identify the fibers of non-ordinary and superspecial reduction of the integral model over~$Y$.
\end{theorem*}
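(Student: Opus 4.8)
The plan is to read the congruence of Corollary~\ref{cor:cong} directly at the level of the Picard--Fuchs solutions $y_1,\dots,y_g$ and then to extract the polynomials $\alpha_{p,j}$ by a single truncation. First I would pin down the exponent $N$. The twisted modular forms $f_{p,j}$ appearing in the corollary are built from the period solutions $y_j$, and the point is that $y_j^N$ becomes a genuine single-valued (twisted) modular form precisely when $N$ clears the fractional local exponents at the $r$ elliptic points: around an elliptic point of order $e$ the uniformizing equation has exponent difference $1/e$, so raising to a suitable common multiple of the orders of the elliptic points kills the branching. With this choice the identification $f_{p,j}=y_j^N$ turns Corollary~\ref{cor:cong} into a congruence
\[
y_j(t)^N\;\equiv\;\alpha_{p,j}(t)\cdot y_{\mathscr{l}_p(j)}(t^p)^N\pmod p
\]
in $(\mathcal{O}_K/p)[\![t]\!]$; the two hypergeometric identities displayed in the introduction are exactly the $g=2$, $N=2$ instance for $\Po/\Delta(2,5,\infty)$.

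Next I would run the truncation, which is where the degree hypothesis does its work. The factor $y_{\mathscr{l}_p(j)}(t^p)^N$ is a power series in $t^p$ with constant term $y_{\mathscr{l}_p(j)}(0)^N=1$, so all of its remaining terms have degree at least $p$; note that the Frobenius twist and the permutation $\mathscr{l}_p$ are already absorbed into the corollary and play no further role here. Multiplying by $\alpha_{p,j}(t)$ and invoking $\deg(\alpha_{p,j})<p$, the genuine polynomial part and every higher-order correction land in disjoint degree ranges, the corrections all having degree at least $p$. Truncation at order $p$ therefore separates them cleanly and yields
\[
[\,y_j(t)^N\,]_p\;\equiv\;\alpha_{p,j}(t)\pmod p,
\]
so that each truncated power recovers, on the nose, the reduction of $\alpha_{p,j}$ modulo $p$.

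It then remains to pass to zero loci. By the discussion preceding Corollary~\ref{cor:size}, the $\alpha_{p,j}$ are the restrictions to the modulo $p$ reduction of $Y$ of the $g$ partial Hasse invariants of the ambient Hilbert modular variety, so the zeros of $\alpha_{p,j}$ cut out the intersection of $Y$ with the $j$-th component of the non-ordinary locus. A fiber is non-ordinary exactly when at least one $\alpha_{p,j}$ vanishes, whence the zeros of $\mathrm{lcm}\{[y_j^N]_p\}$ form the non-ordinary locus; a fiber is superspecial exactly when all the $\alpha_{p,j}$ vanish simultaneously, whence the zeros of $\mathrm{gcd}\{[y_j^N]_p\}$ form the superspecial locus. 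As a consistency check, in the $g=1$ Legendre case the two coincide with the single polynomial $\alpha_{p,1}$, recovering Deuring's Hasse polynomial and Igusa's observation~\cite{IgLeg}.

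I expect the main obstacle to be this last translation, namely making the dictionary between the vanishing of the $\alpha_{p,j}$ and the slope stratification fully precise. One must check that the partial Hasse invariants detect non-ordinariness on $Y$ with the correct (multiplicity-one) behaviour, so that lcm and gcd see the right point sets rather than the wrong multiplicities; that the simultaneous vanishing genuinely picks out superspecial rather than merely supersingular reduction, these differing once $g\ge2$ (which is why the $g=2$ refinement for the supersingular locus is stated apart); and that the $r$ elliptic points, which force the passage from $y_j$ to $y_j^N$ and contribute the extra $r$ in Corollary~\ref{cor:size}, neither create spurious zeros nor merge distinct non-ordinary points after raising to the $N$-th power. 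Pinning down the exact $N$ and verifying this stability of the zero sets under $N$-th powering is the delicate bookkeeping on which the whole identification rests.
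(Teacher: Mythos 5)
Your proposal is correct and takes essentially the same route as the paper's proof: invoke the congruence $y_j(t)^N\equiv\alpha_{p,j}(t)\,y_{j'}(t^p)^N\pmod p$ of Corollary~\ref{cor:cong}, use $\deg(\alpha_{p,j})<p$ together with $y_{j'}(t^p)^N=1+O(t^p)$ to conclude $[y_j(t)^N]_p\equiv\alpha_{p,j}(t)\pmod p$, and then identify the union (resp.\ common) zeros of the $\alpha_{p,j}$ with the non-ordinary (resp.\ superspecial) locus. The final ``obstacles'' you flag are precisely the points the paper settles by citation and by earlier work in the proof of Corollary~\ref{cor:cong}: reducedness of the non-ordinary divisor and the fact that superspecial points are the intersections of the components of the non-ordinary locus are taken from Andreatta--Goren~\cite{AG}, while the elliptic-point contributions are already built into the definition of $\alpha_{p,j}$ in~\eqref{eq:ap}.
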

In Example~\ref{ex:mocu}, we apply Corollary~\ref{cor:calc} to Apery's differential equation for~$\zeta(2)$ and the related family of elliptic curves discovered by Beukers~\cite{BeIrr}. 

For an example related to a non-arithmetic group, consider the family of genus two curves
\begin{equation}
\label{eq:ext}
C_\eta\;:\;\begin{cases}
y^2&=x^5-5x^3+5x-2\eta,\quad \eta\neq\infty\\
y^2&=x^5-1,\quad \eta=\infty\,.
\end{cases}
\end{equation}
The corresponding family of Jacobians lies in the Hilbert modular surface of discriminant~$D=5$, and defines an integral model for the non-arithmetic triangle curve~$\Po/\Delta(2,5,\infty)$. A Hauptmodul for this group with a pole at the cusp is given by~$J=(\eta^2-1)^{-1}$ (it depends on~$\eta^2$ since~$C_\eta\simeq C_{-\eta}$). The integral solutions of the two Picard-Fuchs differential equations are classical hypergeometric functions~$y_j(J)={}_2F_1\Bigl(\frac{5-2j}{20}, \frac{5+2j}{20},1;J^{-1}\Bigr)$ for~$j=1,2$. (see Example~\ref{ex:hgD} for details). Then for~$p>5$ inert in~$\mathbb{Q}(\sqrt{5})$ the components of the family of Jacobians of~\eqref{eq:ext} over~$\bar{\F}_p$ that become supersingular modulo~$p$ correspond to the zeros of the polynomial
\[
\widetilde{\mathrm{ss}}_p(J)\equiv\mathrm{lcm}\biggl(J^{\frac{p-3}{2}}\biggr[{}_2F_1\Bigl(\frac{3}{20}, \frac{7}{20};1;\frac{1}{J}\Bigr)^2\biggl]^5_p,J^{\frac{3p-1}{2}}\biggl[{}_2F_1\Bigl(\frac{1}{20}, \frac{9}{20};1;\frac{1}{J}\Bigr)^2\biggr]^5_p\biggr)\mod p\,.
\]
By defining~$\mathrm{ss}_p(J):=\widetilde{\mathrm{ss}}_p(J)/\gcd(\widetilde{\mathrm{ss}}_p(J),\widetilde{\mathrm{ss}}'_p(J))$ one obtains a polynomial without multiple roots that describes the supersingular locus. 
For instance, if~$p=13$, one finds~$\mathrm{ss}_{13}(J)=J(J-1)(J-9)$, showing that the supersingular fibers of the family~\eqref{eq:ext} over~$\overline{\F}_{13}$ correspond to parameters~$\eta\in\{1,2,\infty\}$.
Interestingly, for~$p=37$, the polynomial is~$\mathrm{ss}_{37}(J)=J(J-1)(J+16)(J^2+J+8)(J^3+22J^2+J+25)$, which shows that there exist supersingular Jacobians~$\mathrm{Jac}(C_\eta)$ over~$\overline{\F}_{37}$ whose~$J$-invariant lies in a cubic extension of~$\F_{37}$ (the same is true for its absolute Igusa invariants). This contrasts with the elliptic case, where supersingular~$j$-values always lie in~$\F_{p^2}$.

In the Appendix, we show how to rewrite the points of supersingular reduction of the family~\eqref{eq:ext} in terms of absolute Igusa invariants, presenting a method that can also be applied to other curves. 

\begin{Ack}
I thank Claudia Alfes, Irene Bouw, Yingkun Li, Matteo Longo, Martin Raum for helpful comments and suggestions. The author is funded by the Deutsche Forschungsgemeinschaft (DFG, German Research Foundation) — SFB-TRR 358/1 2023 — 491392403.
\end{Ack}

\section{Integrality of solutions of Picard-Fuchs differential equations}

\subsection{Picard-Fuchs differential equations for Kobayashi geodesics}
\subsubsection{Kobayashi geodesics}
\label{sec:KG}
Let~$g\ge1$ be an integer and let~$\A_g$ denote a fine moduli space of polarized abelian varieties equipped with a suitable level structure.
Let~$Y$ be an affine complex curve and let~$\V$ be a polarized~$\Z$-variation of Hodge structures over~$Y$. It gives rise to a morphims~$\phi\colon Y\to\mathcal{A}_g$, where~$g=\mathrm{rank}(\V)/2$. 
We denote by~$f\colon\mathcal{X}\to Y$ the family of abelian varieties induced by the morphism~$\phi$. 
Let us identify the universal covering of~$Y$ with the upper half-plane~$\Po=\Po_1$ and of~$\mathcal{A}_g$ with the Siegel upper half-plane~$\Po_g$, and let~$\varphi\colon\Po\to\Po_g$ denote the map induced by~$\phi$ on the universal coverings. The space~$\Po_g$ is, for every~$g\ge1$, a metric space endowed with the Kobayashi metric~\cite{Kob}. The curve~$\phi\colon Y\to\mathcal{A}_g$ is called \emph{Kobayashi geodesic} if~$\varphi(\Po)\subset\Po_g$ is a totally geodesic subspace for the Kobayashi metric. 
The structure of the variations of Hodge structures~$\V$ associated to a Kobayashi geodesic has been investigated by Möller and Viehweg (\cite{MV}, Theorem 5.1 and Corollary 6.2). 

\begin{theorem*}[Möller-Viehweg]
Let~$Y$ be an affine Kobayashi geodesic coming from a weight-one polarized variation of Hodge structures~$\V$. Assume that~$\V$ is irreducible over~$\Q$. There exists a totally real number field~$F$ of degree~$g$ over~$\Q$ with real embeddings~$\sigma_1,\dots,\sigma_g$ and a rank two variation of Hodge structures~$\Lo_F$ of weight one such that
\[
\V_\C\:=\V\otimes_\Q\C\=\Lo^{\sigma_1}\oplus\cdots\oplus\Lo^{\sigma_g}\,,\quad\text{where}\quad\Lo:=\Lo\otimes_F\C\,.
\]
In particular, the general fiber of~$\mathcal{X}\to Y$ is a polarized abelian varieties with real multiplication by~$F$. 

Moreover, $\phi\colon Y\to \mathcal{A}_g$ can be defined over a number field~$K$.
\end{theorem*}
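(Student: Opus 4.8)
The plan is to separate the geometric content (what a totally geodesic holomorphic disk in $\Po_g$ must look like) from the Hodge-theoretic content (how that geometry constrains $\V$), and then to extract the field $F$ from the $\Q$-structure. The essential geometric input is that for $g\ge2$ the Kobayashi metric on $\Po_g=\mathrm{Sp}_{2g}(\R)/U(g)$ is a genuine Finsler metric: restricted to a maximal polydisk $\Po^g\subset\Po_g$ it is the \emph{maximum} of the $g$ Poincaré metrics, not a Riemannian average. Consequently a holomorphic disk $\varphi\colon\Po\to\Po_g$ is totally geodesic precisely when, after conjugating into such a polydisk, one of its coordinate components is a Poincaré-isometric (hence maximal-speed) embedding that dominates the others; equivalently, the derivative of $\varphi$ must realize the full Kobayashi--Royden norm through a single distinguished direction at every point. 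I would take this classification of complex geodesics as the starting geometric fact.

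First I would translate the geodesy condition into the language of the Higgs bundle $(E,\theta)$ attached to $\V$, where $E=E^{1,0}\oplus E^{0,1}$ and the polarization identifies $\theta\colon E^{1,0}\to E^{0,1}\otimes\Omega^1_Y(\log D)$ with an $\Omega^1_Y(\log D)$-valued symmetric form on $E^{1,0}$ (Griffiths transversality). Under the identification of the holomorphic tangent space of $\Po_g$ with symmetric matrices, the Kobayashi--Royden norm of the period derivative is the operator norm of $\theta$, whereas the total-geodesy requirement forces this operator norm to be realized along a rank-two sub-local-system $\Lo\subset\V_\C$ on which $\theta$ restricts to a \emph{maximal} Higgs field, i.e.\ an isomorphism $L^{\otimes 2}\cong\Omega^1_Y(\log D)$ for the associated line bundle $L$. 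This $\Lo$ is then the sub-VHS uniformizing $Y$, and its first Lyapunov exponent equals $1$. I expect this step -- deducing a maximal rank-two summand from the Finsler geodesy condition, rather than from the weaker totally-geodesic condition for the Riemannian Hodge metric -- to be the main obstacle, since it requires controlling $\nabla\theta$ and the second fundamental form of $\varphi$ carefully and excluding configurations (such as symmetric-power embeddings) that are geodesic only in the Bergman sense.

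Next I would upgrade the single summand $\Lo$ to the full decomposition using the $\Q$-structure. By Deligne's semisimplicity and the rigidity of polarized variations of Hodge structure, the sub-VHS $\Lo$ is defined over a number field; and since $\V$ is assumed irreducible over $\Q$ while $\Lo$ is a proper $\C$-sub-VHS, the monodromy (Galois) orbit of $\Lo$ must span $\V_\C$. The commutant $\End(\V)$ then contains the subalgebra generated by the projectors onto these conjugate summands, and compatibility with the polarization together with the weight-one Hodge type forces it to be a totally real field $F$ of degree $g$, the $g$ summands being the realizations $\Lo^{\sigma_1},\dots,\Lo^{\sigma_g}$ of one rank-two $F$-VHS $\Lo_F$ under the real embeddings $\sigma_1,\dots,\sigma_g$. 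This gives $\V_\C=\Lo^{\sigma_1}\oplus\cdots\oplus\Lo^{\sigma_g}$, and the $F$-action by endomorphisms on the generic fiber of $\mathcal{X}\to Y$ is exactly real multiplication by $F$, placing $Y$ inside the Hilbert modular variety attached to $F$.

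Finally, for the last assertion I would observe that the Hilbert modular variety for $F$ is a Shimura variety defined over a number field and maps to $\A_g$ over a number field, while the geodesic $Y$ inside it carries no continuous moduli -- it is pinned down by the maximal Higgs condition and the $F$-action -- and is therefore rigid. Standard descent for such rigid special subvarieties then supplies a model of $\phi\colon Y\to\A_g$ over a number field $K$. Once the analytic extraction of $\Lo$ in the second step is secured, the remaining arithmetic and descent arguments are comparatively formal.
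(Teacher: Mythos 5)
First, a point of order: the paper does not prove this statement at all. It is quoted as background from M\"oller--Viehweg (\cite{MV}, Theorem 5.1 and Corollary 6.2), so there is no ``paper's own proof'' to compare against; your proposal has to be measured against the original argument of~\cite{MV}. Measured that way, your skeleton (geodesy forces a rank-two uniformizing summand; $\Q$-irreducibility plus Galois conjugation and the polarization produce the totally real field $F$ and the decomposition $\V_\C=\Lo^{\sigma_1}\oplus\cdots\oplus\Lo^{\sigma_g}$; rigidity gives definability over a number field) does follow the lines of the original proof, and your steps 2 and 3 are essentially correct summaries of the corresponding arguments: Deligne semisimplicity, the fact that each Galois conjugate $\Lo^{\sigma_j}$ must again carry a polarized weight-one structure (which is what forces every embedding of the trace field to be real, hence $F$ totally real), and rigidity plus countability for descent to $\overline{\Q}$. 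One small caveat in step 3: Kobayashi geodesics are not ``special subvarieties'' in the Shimura-theoretic sense (Teichm\"uller curves generally are not weakly special), so the descent is not by the theory of special subvarieties but by the elementary argument that a rigid curve with its rigid VHS lies in a countable set of isomorphism classes.

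The genuine gap is your first step, and you flag it yourself. You \emph{assume} a classification of Kobayashi-geodesic disks in~$\Po_g$ (every such disk conjugates into a maximal polydisk with one Poincar\'e-isometric dominating component), and you then \emph{assume} that this pointwise Finsler condition can be promoted, by controlling $\nabla\theta$ and the second fundamental form, to the existence of a flat rank-two sub-local system $\Lo\subset\V_\C$ with maximal Higgs field. That promotion is precisely the content of Theorem 1.2 of~\cite{MV}; it is the hard geometric core of the whole theorem, and your outline does not prove it. Note also that M\"oller--Viehweg do not argue pointwise: they obtain the maximal rank-two summand globally, from the equality case of the Arakelov inequality (the Viehweg--Zuo Higgs-bundle degree estimates combined with the Ahlfors--Schwarz distance-decreasing property of period maps). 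This global route is what allows them to bypass the delicate analysis you anticipate. Indeed, a pointwise statement such as ``the Kobayashi--Royden norm is realized along a single distinguished direction at every point'' does not by itself produce a holomorphic, $\nabla$-flat, rank-two subsystem: a priori the distinguished direction could vary non-holomorphically, or fail to be parallel for the Gauss--Manin connection, and excluding this (as well as the symmetric-power-type configurations you mention) is exactly where the global degree computation is used. Until that step is carried out, the decomposition in steps 2 and 3 has nothing to stand on.
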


This result implies that the morphism~$\phi\colon Y\to\mathcal{A}_g$ factors through a Hilbert modular variety (see Section~\ref{sec:HMV}). In particular, the uniformizing group~$\Gamma$ of the base curve~$Y$ can be realized as a subgroup of a Hilbert modular group; it follows that it is defined over a totally real number field of degree~$g$ over~$\Q$. 

\subsubsection{Picard-Fuchs differential equations}
\label{sec:PF}
Let~$\mathbb{L}_j:=\mathbb{L}^{\sigma_j}$, for~$j=1,\dots,g$, be the rank-two variation of Hodge structures appearing in Theorem~\ref{sec:KG}. It follows from the theorem, that each vector bundle~$\Lo_j\otimes\mathcal{O}_{Y}$ is equipped with a Gauss-Manin connection~$\nabla_j=\nabla$, coming from the (Gauss-Manin) connection on~$\V$. The vector bundle~$\Lo_j\otimes\mathcal{O}_{Y}$ contains a holomorphic sub-bundle~$\mathcal{L}_j:=(\Lo_j\otimes\mathcal{O}_{Y})^{(0,1)}$ of rank one, whose global sections can be identified with holomorphic one-forms~$\omega_j$ on the fibers of~$\mathcal{X}\to Y$ that are eigenforms for the action of real multiplication. The bundles~$\mathcal{L}_j$ extend over a compactification~$Y$ of~$Y$ constructed by adding to it a finite number of cusps (see II.5 of~\cite{Deligne}).

Let~$t$ be a coordinate on~$Y$ and let~$\partial/\partial t$ act on the sections of~$\Lo_j\otimes\mathcal{O}_{Y}$ via the connection~$\nabla$, i.e., let $D:=\nabla(\partial/\partial t)\in \mathrm{End}(\Lo_j\otimes\mathcal{O}_{Y})$.
It follows from the irreducibility of~$\mathbb{L}_j$ that the sections~$\omega_j$ and~$D\omega_j$ of~$\Lo_j\otimes\mathcal{O}_{Y}$ are linearly independent, and then~$\omega_j$ satisfies a linear differential equation~$L_j\omega_j=0$ where
\[
L_j\;=\;D^2\+q_jD\+r_j\,,\quad j=1,\dots,g\,,
\]
for some meromorphic functions~$q_j$ and~$r_j$ on a compactification~$\overline{Y}$ of~$Y$. The differential operators~$L_1,\dots,L_g$ are classically called~\emph{Picard-Fuchs differential operators}. If~$Y$ is a genus zero curve, the action of~$D$ induces Fuchsian differential equations on~$\pro^1(\C)$ of the form
\begin{equation}
\label{eq:PF0}
L_j\;=\;p_j(t)\frac{d^2}{dt^2}\+q_j(t)\frac{d}{dt}\+r_j(t)\,,\quad j=1,\dots,g\,,
\end{equation}
where~$p_j(t),q_j(t)$ and~$r_j(t)$ are polynomials, and the degrees of~$q_j(t)$ and~$r_j(t)$ satisfy certain bounds (Fuchs's Theorem) depending on the number of singularities of the differential equation, i.e., on the degree of~$p_j(t)$. 
When~$Y\simeq\Po/\Delta(n,m,\infty)$ is a triangle curve, the Picard-Fuchs operators are hypergeometric differential operators~\cite{BMt},\cite{BN}, but in general non-classical Fuchsian equations arise; for instance, the Picard-Fuchs equations related to Teichmüller curves in~$\mathcal{M}_2$, have been explicitly computed in~\cite{BM}.

\subsubsection{Period mapping and uniformization}
\label{sec:unif}
Kobayashi geodesics in $\A_g$ can be characterized in terms of period maps (\cite{MV}, Theorem 1.2). 
\begin{theorem*}[Möller-Viehweg]
A curve~$\phi\colon Y\to\A_g$ is a Kobayashi geodesic if and only if the associated weight-one variation of Hodge structures~$\V$ over~$Y$ contains a non-unitary irreducible subvariation $\Lo\otimes\mathbb{U}$ of Hodge structures, where $\Lo$ is a rank-two, weight-one variation of Hodge structures on~$Y$ and $\mathbb{U}$ is an unitary irreducible local system, and such that the period map of~$\Lo$ is an isomorphism~$\varphi_1\colon\widetilde{Y}\overset{\simeq}{\to}\Po_1$, where~$\widetilde{Y}$ is the universal covering of~$Y$ and~$\Po_1=\Po$ the period domain. 
\end{theorem*}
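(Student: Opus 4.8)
The plan is to prove the two implications separately, translating the differential-geometric condition (totally geodesic for the Kobayashi metric) into the Hodge-theoretic structure of $\V$ through the Higgs field of the period map. First I would fix the infinitesimal picture: writing $\varphi\colon\widetilde Y\to\Po_g$ for the lift of the period map, its derivative is encoded by the Higgs field $\theta\colon\mathcal{F}^{1,0}\to\mathcal{F}^{0,1}\otimes\Omega^1_{\widetilde Y}$ of the weight-one VHS $\V$, where $\mathcal{F}^{1,0}$ is the Hodge bundle. I would record the curvature formula for the Hodge metric on $\mathcal{F}^{1,0}$ in terms of $\theta$, since this is the engine behind the Arakelov/Simpson inequality $\deg\mathcal{F}^{1,0}\le\tfrac12\,\mathrm{rk}\cdot\deg\Omega^1_{\overline Y}(\log S)$, whose equality case ("maximal Higgs field") is the bridge between the metric and the algebra.

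For the implication $(\Leftarrow)$, suppose $\V$ contains $\Lo\otimes\mathbb{U}$ with $\Lo$ a rank-two weight-one VHS whose period map is an isomorphism $\varphi_1\colon\widetilde Y\xrightarrow{\sim}\Po_1$ and $\mathbb{U}$ unitary. A unitary local system carries a flat Hermitian metric and a constant Hodge structure, so it contributes no motion to the period map; the unitary factor together with the orthogonal complement of $\Lo\otimes\mathbb{U}$ in $\V$ yields a fixed, totally geodesic factor of $\Po_g$. The variation is therefore governed entirely by $\Lo$, and $\varphi$ factors through the embedding $\Po_1\hookrightarrow\Po_g$ induced by $\Lo\otimes\mathbb{U}\hookrightarrow\V$. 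This is the standard diagonal (modular) embedding of a disk into Siegel space, which is totally geodesic for the invariant metric; composing with $\varphi_1$ exhibits $\varphi(\widetilde Y)$ as a totally geodesic disk.

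For the implication $(\Rightarrow)$, assume $\varphi(\widetilde Y)$ is totally geodesic. Then $\varphi^*$ of the Kobayashi metric is a complete Hermitian metric on $\widetilde Y\cong\Po_1$ of constant holomorphic sectional curvature, i.e.\ up to scale the Poincaré metric. Comparing this with the curvature formula forces the Higgs field, restricted to the sub-bundle it generates, to be \emph{maximal}, achieving equality in the Arakelov inequality on a rank-two sub-VHS. Using Deligne's semisimplicity theorem I would decompose $\V$ into isotypical pieces: the piece on which $\theta$ acts nontrivially is the non-unitary part, and maximality of the Higgs field isolates a rank-two weight-one sub-VHS $\Lo$ with maximal Higgs field, the remaining multiplicity being recorded by a unitary irreducible local system $\mathbb{U}$, so that $\Lo\otimes\mathbb{U}\hookrightarrow\V$. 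Finally, by the structure theorem for rank-two VHS with maximal Higgs field (Viehweg--Zuo, Möller), maximality of $\theta$ on $\Lo$ is equivalent to $\varphi_1$ being an isomorphism onto $\Po_1$, which closes the characterization.

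The main obstacle is the forward direction, namely turning the purely metric statement into the algebraic one. This requires (a) care with the fact that the Kobayashi metric on $\Po_g$ is a Finsler rather than the Bergman metric, so one must argue that along a complex geodesic disk it agrees with an invariant metric of constant curvature; (b) the precise curvature computation linking constant holomorphic sectional curvature of the pulled-back metric to equality in the Arakelov/Simpson bound; and (c) extracting the clean tensor decomposition $\Lo\otimes\mathbb{U}$ from Deligne's decomposition, in particular verifying that the non-unitary maximal part is genuinely of rank two.
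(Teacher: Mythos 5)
You should note at the outset that the paper contains no proof of this statement: it is quoted verbatim from M\"oller--Viehweg (\cite{MV}, Theorem 1.2) as an external input, so the only meaningful comparison is with the proof given there. Your forward direction ($\Rightarrow$) follows, in outline, the same strategy as theirs: decompose $\V$ using Deligne semisimplicity, show that the Kobayashi isometry condition forces equality in the Arakelov inequality for some irreducible summand (equivalently, a maximal Higgs field there), and then invoke the Viehweg--Zuo structure theorem, which identifies such a summand as $\Lo\otimes\mathbb{U}$ with $\Lo$ of rank two and maximal Higgs field and $\mathbb{U}$ unitary; the equivalence between maximality of the Higgs field of $\Lo$ and $\varphi_1$ being an isomorphism onto $\Po_1$ is the standard final step. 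You correctly flag that the genuinely hard part --- converting the Finsler-metric hypothesis into the Arakelov equality (M\"oller--Viehweg route this through a splitting of the logarithmic tangent map and Simpson's correspondence) --- is deferred; as a skeleton, though, this half is faithful to the actual argument.

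The backward direction ($\Leftarrow$), however, contains a genuine gap. You assert that the unitary factor ``together with the orthogonal complement of $\Lo\otimes\mathbb{U}$ in $\V$ yields a fixed, totally geodesic factor,'' so that $\varphi$ factors through an embedding $\Po_1\hookrightarrow\Po_g$. But the hypothesis only says that $\V$ \emph{contains} the subvariation $\Lo\otimes\mathbb{U}$; nothing is assumed about the complementary summand, which may be non-unitary with a genuinely moving period map. This is not a marginal case --- it is exactly the situation for the central examples of this paper: for a Kobayashi curve in a Hilbert modular variety one has $\V_\C=\Lo^{\sigma_1}\oplus\cdots\oplus\Lo^{\sigma_g}$, where every factor is non-unitary and the period maps $\varphi_j$, $j\ge2$, of the complementary factors are the (non-constant, strictly contracting) components of the modular embedding. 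Hence $\varphi(\widetilde{Y})$ is \emph{not} contained in any standard geodesic disk $\Po_1\subset\Po_g$, and what you prove is only the special case $\V=\bigl(\Lo\otimes\mathbb{U}\bigr)\oplus(\text{unitary})$. The correct argument --- and the reason the sup-like Kobayashi metric, rather than the Bergman metric, appears in the statement --- uses holomorphic retractions: the splitting of $\V$ gives a holomorphic projection $\pi\colon\Po_g\to\Po_1$ onto the period domain of $\Lo$ (block projection onto the subdomain attached to $\Lo\otimes\mathbb{U}$, followed by the retraction of that subdomain onto its diagonally embedded $\Po_1$) satisfying $\pi\circ\varphi=\varphi_1$. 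Since holomorphic maps do not increase Kobayashi distances and $\varphi_1$ is a biholomorphism, the chain
\[
d_{\Po_1}(p,q)\;\ge\;d_{\Po_g}\bigl(\varphi(p),\varphi(q)\bigr)\;\ge\;d_{\Po_1}\bigl(\varphi_1(p),\varphi_1(q)\bigr)\=d_{\Po_1}(p,q)
\]
collapses to equalities, so $\varphi$ is a Kobayashi isometry \emph{regardless} of what the complement does. For the Bergman (Riemannian) metric this argument fails and a moving complement really does destroy total geodesy; your instinct to freeze the complement is what would be required in that setting, which suggests a conflation of the two metrics.
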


Let us interpret this result in terms of differential equations. The variation of Hodge structures~$\Lo$ in the theorem above and~$\Lo=\Lo_1$ in Theorem~\ref{sec:KG} are the same. It follows from the classical theory that the ratio of independent solutions of the Picard-Fuchs differential equation~$L_1$ lifts, as map on the universal covering~$\widetilde{Y}$ of~$Y$, to the period map~$\varphi_1$. What is exceptional in the case of Kobayashi curves, is that this map gives an identification of~$\widetilde{Y}$ with the upper half-plane~$\Po$. This is precisely the property defining another, a priori unrelated, classical object, the~\emph{uniformizing differential equation} of the curve~$Y$ seen as a Riemann surface. More precisely, as envisioned by Poincaré (see~\cite{Gray} for an historical account), the uniformizing differential equation of a hyperbolic Riemann surface~$X$ is the unique linear second-order differential equation with holomorphic coefficients in~$X$, whose ratio of independent solutions identify the universal covering~$\widetilde{X}$ of~$X$ with the simply connected Riemann surface~$\Po$. Two differential equations of order two on~$X$ are called \emph{projectively equivalent} if the ratio of independent solutions lifts to the same function on~$\widetilde{X}$. The above discussion implies then the following result.

\begin{corollary}
\label{cor:uni}
Let~$\phi\colon Y\to\A_g$ be a Kobayashi geodesic. The Picard-Fuchs differential equation associated to the factor~$\Lo$ is projectively equivalent to the uniformizing differential equation of~$Y$ seen as a Riemann surface. In particular, the image of the monodromy representation is conjugated to the uniformizing group~$\Gamma$ of~$Y$ seen as a subgroup of~$\SL_2(\R)$ via the trivial real embedding~$\sigma_1\colon F\to\R$. 
\end{corollary}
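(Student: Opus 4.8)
The plan is to unwind the chain of identifications set up in Section~\ref{sec:unif} and verify that the two differential equations produce the same multivalued ratio of solutions on the universal covering. First I would recall the classical fact, already invoked in the paragraph preceding the statement, that for any second-order Fuchsian operator $L_1 = D^2 + q_1 D + r_1$ the ratio $\tau = s_1/s_2$ of two linearly independent (local) holomorphic solutions defines, by analytic continuation, a multivalued map on $Y$ that lifts to a single-valued holomorphic map $\widetilde{Y}\to\pro^1(\C)$; its monodromy is the projectivization of the monodromy representation of $L_1$. The content of projective equivalence is exactly that two operators induce the same lift, so the corollary amounts to identifying this lift with the uniformizing map of $Y$ as a Riemann surface.

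The key step is to match this lifted ratio with the period map $\varphi_1$ of $\Lo=\Lo_1$. Here I would use the Möller--Viehweg period-map characterization quoted just above: the period map of $\Lo$ is an isomorphism $\varphi_1\colon\widetilde{Y}\overset{\simeq}{\to}\Po_1=\Po$. Since $\Lo_1\otimes\OO_Y$ carries the Gauss--Manin connection $\nabla$, and $\omega_1$ together with $D\omega_1$ trivializes the rank-two bundle, the period map is precisely the ratio of the two period integrals of the local system $\Lo_1$ against a flat frame, i.e.\ the ratio of two independent flat sections paired with $\omega_1$. These periods are exactly the independent solutions of $L_1\omega_1=0$, so $\varphi_1$ coincides with the solution-ratio $\tau$ of $L_1$ up to a constant Möbius transformation (the choice of flat basis of the local system). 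Therefore $\tau$ lifts to an isomorphism $\widetilde{Y}\overset{\simeq}{\to}\Po$, which is the defining property of the uniformizing differential equation of $Y$; projective equivalence with the uniformizing operator follows. The second assertion is then a formal consequence: the image of the monodromy representation of $L_1$ is, after projectivization, the deck group of $\widetilde{Y}\to Y$ acting on $\Po$ through $\varphi_1$, i.e.\ the uniformizing group $\Gamma\subset\SL_2(\R)$, realized via the trivial embedding $\sigma_1$ because $\Lo_1=\Lo^{\sigma_1}$ is the factor on which the period domain is $\Po_1=\Po$ itself rather than its conjugate.

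I expect the main subtlety to be the passage from ``the ratio of solutions lifts to the period map'' to ``the lift is an isomorphism onto $\Po$,'' since for a generic Fuchsian equation the solution-ratio lands in $\pro^1(\C)$ and need be neither injective nor have image contained in a half-plane. The resolution is not a new computation but the invocation of the Kobayashi-geodesic hypothesis through the cited theorem: it is exactly the non-unitarity of $\Lo\otimes\mathbb{U}$ and the totally-geodesic condition that force $\varphi_1$ to be an isomorphism onto $\Po_1$, rather than a general holomorphic map to the compact dual. Once this input is granted, the remaining work is bookkeeping: checking that the trivialization $\{\omega_1, D\omega_1\}$ identifies solutions of $L_1$ with periods of $\Lo_1$ (this uses only the irreducibility of $\Lo_1$, already established in Theorem~\ref{sec:KG} to guarantee $\omega_1,D\omega_1$ are independent), and tracking that the flat structure identifies the projectivized monodromy with $\Gamma$ under $\sigma_1$. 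A minor point to handle carefully is uniqueness of the uniformizing equation up to projective equivalence, so that ``projectively equivalent to the uniformizing equation'' is a well-posed conclusion; this is classical and I would simply cite it.
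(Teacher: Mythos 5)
Your proposal is correct and follows essentially the same route as the paper: the paper's argument is precisely to identify the lifted ratio of independent solutions of $L_1$ with the period map $\varphi_1$ of $\Lo=\Lo_1$ (classical theory), invoke the M\"oller--Viehweg period-map characterization to conclude that this lift is an isomorphism $\widetilde{Y}\overset{\simeq}{\to}\Po$, and then appeal to the definition of the uniformizing differential equation and of projective equivalence, with the monodromy statement following from the identification of the projectivized monodromy with the deck group acting through $\sigma_1$. Your additional care about the flat-frame identification and the M\"obius ambiguity in the choice of solution basis only makes explicit what the paper leaves implicit.
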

 
\subsubsection{Differential equations and modular embeddings} 
\label{sec:modde}
A Theorem of Möller (see~\cite{Mo1}, Corollary 2.11) permits to generalize the content of Corollary~\ref{cor:uni} to the differential operators~$L_j$ for~$j>1$. 
\begin{theorem*}[Möller]
\label{thm:unif}
Let~$\phi\colon Y\to\mathcal{A}_g$ be a Kobayashi geodesic. The ratio of linearly independent solutions of the Picard-Fuchs differential equation associated to~$\Lo_j$, for~$j=1,\dots,g$, lifts, via analytic continuation, to the~$j$-th component~$\varphi_j:=\mathrm{proj}_j\circ\varphi\colon\Po\to\Po$ of the modular embedding~$\varphi\colon\Po\to\Po^g$ associated to~$\phi$.
Moreover, the image of the monodromy representation associated to~$\Lo_j$ is conjugated to~$\sigma_j(\Gamma)\subset\SL_2(\R)$, where~$\sigma_1,\dots,\sigma_g\colon F\to R$ are the real embedding of the totally real number field~$F$. 
\end{theorem*}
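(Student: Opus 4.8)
The plan is to deduce the statement from the $j=1$ case already established in Corollary~\ref{cor:uni}, by combining the classical dictionary between rank-two variations of Hodge structures and second-order differential equations with the real multiplication decomposition of Theorem~\ref{sec:KG} and with Galois conjugation of local systems.

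First I would record, exactly as in the proof of Corollary~\ref{cor:uni} but now for each index $j$, that the ratio of two linearly independent solutions of $L_j$ continues analytically on $\widetilde{Y}$ to the period map of $\Lo_j$. Indeed, $L_j$ is by construction the equation satisfied by the eigenform $\omega_j$ under the connection $\nabla_j=\nabla$ of Section~\ref{sec:PF}; the multivalued periods of $\omega_j$ against a flat frame of $\Lo_j$ form a basis of solutions of $L_j$, and their normalized ratio is precisely the holomorphic map $\widetilde{Y}\to\Po$ recording the Hodge line $\mathcal{L}_j\subset\Lo_j\otimes\OO_Y$ inside the rank-two fibre, that is, the period map of $\Lo_j$.

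Next I would identify this period map with the $j$-th component $\varphi_j$ of the modular embedding. The key point is that real multiplication by $F$ acts by endomorphisms of $\mathcal{X}\to Y$, hence horizontally for the Gauss-Manin connection, so that the decomposition $\V_\C=\Lo^{\sigma_1}\oplus\cdots\oplus\Lo^{\sigma_g}$ of Theorem~\ref{sec:KG} is flat and compatible with the Hodge filtration. Consequently the Hodge filtration on $\V_\C$ splits as a direct sum of lines, one in each rank-two summand $\Lo^{\sigma_j}_\C$, namely the fibres of the sub-bundles $\mathcal{L}_j$; the period map $\varphi\colon\widetilde{Y}\to\Po_g$ of $\V$ therefore factors through the sub-period-domain $\Po^g\hookrightarrow\Po_g$ cut out by the $F$-action (the Hilbert modular locus) as $\varphi=(\varphi_1,\dots,\varphi_g)$, and its $j$-th coordinate is exactly the period map of $\Lo_j$ computed in the previous step. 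This realizes $\varphi$ as the modular embedding associated to $\phi$ and establishes the first assertion.

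Finally, for the monodromy I would use that $\Lo$ underlies an $F$-rational variation, so that its monodromy representation $\rho\colon\pi_1(Y)\to\SL_2(F)$ has image $\sigma_1(\rho(\pi_1(Y)))=\Gamma$ by Corollary~\ref{cor:uni}. The conjugate local system $\Lo_j=\Lo^{\sigma_j}$ has monodromy $\sigma_j\circ\rho$, whence its image is $\sigma_j(\Gamma)\subset\SL_2(\R)$; equivalently $\varphi_j(\gamma z)=\sigma_j(\gamma)\,\varphi_j(z)$ for all $\gamma\in\Gamma$, which is the equivariance defining a modular embedding. The step I expect to be the main obstacle is precisely the horizontality and Hodge-compatibility of the eigenform decomposition used in the third paragraph: everything else is the rank-two dictionary of Corollary~\ref{cor:uni} applied factor by factor, but one must verify carefully that the $F$-action splits both the flat bundle and the Hodge filtration, so that the global period map genuinely decomposes as a product governed summand-wise by the operators $L_j$.
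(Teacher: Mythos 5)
Your proposal is correct and takes essentially the approach the paper itself relies on: the paper does not reprove this statement but quotes it as Möller's result (\cite{Mo1}, Corollary 2.11), remarking only that Möller's proof---being based on the study of the period map of a variation of Hodge structures that decomposes as in the Möller--Viehweg theorem---adapts immediately from Teichmüller curves to general Kobayashi geodesics. Your reconstruction (the flat, Hodge-compatible eigenspace decomposition of~$\V_\C$ under real multiplication, the identification of the period map of each rank-two factor~$\Lo_j$ with the component~$\varphi_j$ of the modular embedding, and Galois conjugation of the $F$-rational monodromy yielding~$\sigma_j(\Gamma)$) is precisely that strategy carried out factor by factor, so it matches the argument the paper defers to rather than offering a different route.
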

The proof in Möller's paper is written in the particular case of Teichmüller curves but, being based on the study of the period map associated to a variations of Hodge structures that decomposes as in Theorem~\ref{sec:KG}, it adapts immediately to the case of Kobayashi curves. 

A first consequence (Corollary 2.11 in~\cite{Mo1}) of Möller's result is that the component~$\varphi_j\colon\Po\to\Po$ of the modular embedding is equivariant for the action of~$\sigma_1(\Gamma)=\Gamma$ on the domain and of~$\sigma_j(\Gamma)$ on the codomain, both acting on~$\Po$ via Möbius transformations. In other words, for every~$\gamma\in\Gamma$ it holds
\begin{equation}
\label{eq:modem}
\varphi_j(\gamma\tau)\=\sigma_j(\gamma)\cdot\varphi_j(\tau)\,\quad\tau\in\Po\,.
\end{equation}

Another consequence of Möller's theorem is the following. Assume that the differential operators~$L_1,\dots,L_g$ are defined over~$\pro^1(\C)$ , i.e., that~$Y$ has genus zero. For every~$j=1,\dots,g$, let~$y_j(t)$ be a holomorphic solution of the Picard-Fuchs equation~$L_jv=0$ around the point~$t=0$. Here the parameter~$t$ on~$\overline{Y}$ should be thought as the image of a modular function (Hauptmodul) giving the identification~$\overline{Y}\simeq\pro^1(\C)$. Via analytic continuation, one constructs a lift~$f_{j}\colon \Po\to\C$  of~$y_j(t)$ on the universal covering of~$Y$, that is holomorphic on~$\Po$. Because of the construction of the Picard-Fuchs differential equations, the functions~$f_j$ are nothing but the period functions~$\int_{\alpha_j}\omega_j(\tau)$ for a suitable choice of cycles~$\alpha_j$ (see~\cite{Mo1} for more on this perspective).
\begin{proposition}
\label{prop:twide}
The functions~$f_j(\tau)$ satisfy the generalized modular property in weight one
\[
f_j(\gamma\tau)\=\bigl(\sigma_j(c)\varphi_j(\tau)+\sigma_j(d)\bigr)\cdot f_j(\tau)\,,\quad\text{for every }\gamma=\begin{pmatrix} a & b\\c & d\end{pmatrix}\in\Gamma\,.
\]
\end{proposition}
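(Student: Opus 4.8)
The plan is to realise $f_j$ as one of a pair of independent solutions and to extract the automorphy factor from the equivariance~\eqref{eq:modem} of the modular embedding, together with the fact, recorded by Möller's theorem above, that the monodromy of $L_j$ is, up to conjugation, the representation $\sigma_j$. First I would fix a solution $\tilde f_j$ of $L_jv=0$ independent from $f_j$, so that, by Möller's theorem, the ratio $\varphi_j=\tilde f_j/f_j$ is the $j$-th component of the modular embedding. Since the Hauptmodul $t$ is $\Gamma$-invariant, the coefficients of $L_j$, pulled back to $\Po$ through $t(\tau)$, are $\Gamma$-invariant; hence for each $\gamma\in\Gamma$ both $\tau\mapsto f_j(\gamma\tau)$ and $\tau\mapsto\tilde f_j(\gamma\tau)$ are again solutions of the same second-order equation in $\tau$. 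Writing the solution vector as $\mathbf f=(\tilde f_j,f_j)^{\mathrm t}$, there is therefore a constant matrix $M_\gamma\in\GL_2(\C)$ with $\mathbf f(\gamma\tau)=M_\gamma\,\mathbf f(\tau)$; this $M_\gamma$ is precisely the monodromy of $L_j$ along the loop in $Y$ determined by $\gamma$, and $\gamma\mapsto M_\gamma$ is (anti)homomorphic.

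Next I would divide the two components of $\mathbf f(\gamma\tau)=M_\gamma\mathbf f(\tau)$ to obtain $\varphi_j(\gamma\tau)=M_\gamma\cdot\varphi_j(\tau)$ as a Möbius transformation. Comparing with~\eqref{eq:modem}, the matrices $M_\gamma$ and $\sigma_j(\gamma)$ induce the same Möbius map, hence $M_\gamma=\kappa_\gamma\,\sigma_j(\gamma)$ for some scalar $\kappa_\gamma\in\C^\times$. Reading off the lower component then yields
\[
f_j(\gamma\tau)=\kappa_\gamma\bigl(\sigma_j(c)\tilde f_j(\tau)+\sigma_j(d)f_j(\tau)\bigr)=\kappa_\gamma\bigl(\sigma_j(c)\varphi_j(\tau)+\sigma_j(d)\bigr)f_j(\tau),
\]
so the assertion is equivalent to the statement $\kappa_\gamma=1$ for all $\gamma\in\Gamma$.

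It remains to kill the scalar. By Möller's theorem the representation $\gamma\mapsto M_\gamma$ is conjugate to $\sigma_j$, so $\det M_\gamma=\det\sigma_j(\gamma)=1$ and $\tr M_\gamma=\tr\sigma_j(\gamma)$ for every $\gamma$. Taking determinants in $M_\gamma=\kappa_\gamma\sigma_j(\gamma)$ gives $\kappa_\gamma^2=1$, while taking traces gives $\kappa_\gamma\,\tr\sigma_j(\gamma)=\tr\sigma_j(\gamma)$; hence $\kappa_\gamma=1$ whenever $\tr\sigma_j(\gamma)\neq0$. Finally, $\gamma\mapsto\kappa_\gamma$ is a character $\Gamma\to\{\pm1\}$, and since $\sigma_j(\Gamma)$ is a lattice in $\SL_2(\R)$, hence Zariski-dense in $\SL_2$, any $\gamma$ with $\tr\sigma_j(\gamma)=0$ can be written as $\gamma=(\gamma\delta)\delta^{-1}$ with $\delta\in\Gamma$ chosen so that both $\sigma_j(\delta)$ and $\sigma_j(\gamma\delta)$ have nonzero trace; multiplicativity of $\kappa$ then forces $\kappa_\gamma=1$ throughout.

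I expect the main obstacle to be the verification that $M_\gamma$ carries no automorphy factor beyond the pure monodromy, i.e.\ that the only $\gamma$-dependence of $f_j(\gamma\tau)$ is the one produced by analytic continuation. Conceptually this is guaranteed by the period description $f_j=\int_{\alpha_j}\omega_j$ recorded above: the eigenform $\omega_j$ is a section over $Y$ and is therefore $\Gamma$-invariant on $\Po$, so that continuing along $\gamma$ only transports the cycle $\alpha_j$ by the integral, determinant-one homology monodromy. The determinant–trace comparison is exactly what lets one conclude $\kappa_\gamma=1$ without making the conjugating matrix in Möller's theorem explicit, and the residual trace-zero (order-four elliptic) elements are dispatched by Zariski density.
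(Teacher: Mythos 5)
Your proof follows essentially the same route as the paper's: both realize $f_j$ and $\varphi_j f_j$ as lifts of a fundamental system of solutions of $L_jv=0$, invoke M\"oller's theorem to identify the monodromy with $\sigma_j$, and read the automorphy factor off the second row of the monodromy matrix. The one genuine difference is that the paper simply asserts that the monodromy in the basis $(\varphi_jf_j,f_j)$ is \emph{exactly} $\sigma_j(\gamma)$, whereas you extract it only up to a scalar $\kappa_\gamma$ from the projective equivariance~\eqref{eq:modem} and then kill the scalar by determinant and trace comparisons; this is a refinement in rigor, and it is valid provided M\"oller's result (Corollary 2.11 of~\cite{Mo1}) is read at the level of representations (conjugacy of the monodromy representation with $\gamma\mapsto\sigma_j(\gamma)$, not merely of its image), which is indeed what that corollary gives and what the paper itself implicitly uses. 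One justification in your final step is wrong, though fixable: for $j>1$ and $\Gamma$ non-arithmetic, $\sigma_j(\Gamma)$ is \emph{not} a lattice in $\SL_2(\R)$ — it is in general not even discrete, which is precisely why modular embeddings are needed — but Zariski density still holds, most easily because $\tr\sigma_j(\gamma)=\sigma_j(\tr\gamma)$ vanishes if and only if $\tr\gamma$ does, so the auxiliary element $\delta$ can be produced from Zariski density (Borel density) of the lattice $\Gamma$ itself. Finally, note a caveat your trace argument makes visible but which is shared by the paper's statement: if $-I\in\Gamma$ (as happens for the paper's own example $\Delta(2,5,\infty)$, where $S^2=-I$), then $M_{-I}=I$ while $\sigma_j(-I)=-I$, so both the representation-conjugacy premise and the stated weight-one transformation fail for $\gamma=-I$; the proposition must implicitly be read for $-I\notin\Gamma$ or up to sign/multiplier conventions.
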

\begin{proof}
Let~$\widetilde{y_j}$ be another solution in~$t=0$ of the differential equation~$L_jv=0$, such that~$y_j$ and~$\widetilde{y}_j$ are linearly independent. The result of Möller implies that~$\widetilde{y_j}$ lifts to the complex-valued function~$\varphi_j(\tau)f_j(\tau)$ on~$\Po.$ Moreover, the image of the monodromy representation associated to~$L_jv=0$ is~$\sigma_j(\Gamma)$, where~$\sigma_j$ acts componentwise. Since~$f_j$ and~$\varphi_jf_j$ are lifts of independent solutions of this differential equations, it follows that for every~$\gamma\in\Gamma$ as in the statement, the monodromy action is expressed as
\[
\begin{pmatrix}
\varphi_j(\gamma\tau)f_j(\gamma\tau)\\f_j(\gamma\tau)
\end{pmatrix}
\=\sigma_j(\gamma)\begin{pmatrix}
\varphi_j(\tau)f_j(\tau)\\f_j(\tau)
\end{pmatrix}\=
\begin{pmatrix}
\sigma_j(a)\varphi_j(\tau)f_j(\tau)\+\sigma_j(b)f_j(\tau)\\\sigma_j(c)\varphi_j(\tau)f_j(\tau)\+\sigma_j(d)f_j(\tau)
\end{pmatrix}\,.
\]
\end{proof}

Following~\cite{MZ}, we say that a holomorphic function~$f\colon\Po\to\C$ is a~\emph{twisted modular form} of weight~$(k_1,\dots,k_g)$ with respect to the modular embedding~$\varphi\colon\Po\to\Po^g$ if it satisfies the modular property
\[
f(\gamma\tau)\=f(\tau)\cdot\prod_{j=1}^g(\sigma_j(c)\varphi_j(\tau)+\sigma_j(d))^{k_j}\,,\quad\text{for every }\gamma=\begin{pmatrix} a & b\\c & d\end{pmatrix}\in\Gamma\,,
\]
and the following growth condition: the function~$f(g\tau)\prod_{j=1}^g(\sigma_j(c)\varphi_j(\tau)+\sigma_j(d))^{-k_j}$ is bounded as~$\tau\to\infty$ for every~$g\in\SL_2(K)$. We denote by~$M_{\vec{k}}(\Gamma,\varphi)$ the space of twisted modular forms of weight~$\vec{k}=(k_1,\dots,k_g)$. 
Classical modular forms are twisted modular forms of weight~$(k,0,\dots,0)$. Prominent examples of new twisted modular forms of weight~$(2,0,\dots,0,-2,0,\dots,0)$, with the non-zero weights at the first and~$j$-th place, are the derivative~$\varphi_j'$ of the components of the modular embedding~$\varphi_j$: the modular transformation follows directly from the property~\eqref{eq:modem} of~$\varphi_j$, while the boundedness at the cusp is proven in Proposition 1.1 of~\cite{MZ}.

\begin{remark}
By considering~\emph{stable} forms~$\omega_j$ as in Proposition 5.6 of~\cite{MZ}, it is possible to prove that the functions~$f_j$ in Proposition~\ref{prop:twide} are holomorphic at the cusps in the sense of the above definition, but this is not needed in the arguments below.
\end{remark}

The modularity property of the solutions of the differential equations~$L_jv=0$ gives insight on the local properties of the solutions. In order to describe them, let~$L$ denote a Fuchsian differential equation of order two. The local behavior of the solutions of~$L$ a regular singular point~$t_0$ is described by a pair of numbers~$\bigl(s^{(t_0)}_1,s^{(t_0)}_2\bigr)\in\C^2$ called~\emph{characteristic exponents}. We write~$(s_1,s_2)$ if there is no ambiguity about the singular point considered. 
The difference~$s_2-s_1$ of the characteristic exponents plays a role in the local study of solutions; the relevant cases in our setting are the following (assume~$t_0=0$ for simplicity). If~$s_2-s_1\not\in\Z$, then there are two independent solutions~$y_1(t)$ and~$y_2(t)$ for~$Lv=0$, of the form~$y_i(t)=t^{s_i}\sum_{n\ge 0}{y_{i,n}t^n}$ where the coefficients~$y_{i,n}$ are uniquely determined by a linear recursion, one for each~$i=1,2$, and the initial datum~$y_{i,0}=1$. If~$s_2-s_1=0$ the two solutions are of the form~$y_1(t)=x^{s_1}\sum_{n\ge 0}{y_{1,n}t^n}$ and~$y_2(t)=\log(t)\cdot y_1(t)+\widetilde{y_2}(t)$, where the coefficients~$y_{1,n}$ are uniquely determined recursively with initial datum~$y_{1,0}=1$, and~$\widetilde{y_2}(0)$ is a holomorphic function in~$t=0$ (Frobenius method). For details and the discussion of the missing cases, which do not play a role in this paper, see Chapter 2 of~\cite{Yo}. The table of all the characteristic exponents of the Fuchsian differential operator~$L$ is called the~\emph{Riemann scheme} of~$L$.

In the next lemma, we determine the Riemann scheme of a differential operator~$L_j$ whose solution is a twisted modular form~$f_j$ of weight~$(0,\dots,0,1,0\dots,0)$, non-zero in the~$j$-th place. Examples of such differential operators are the Picard-Fuchs operators associated to a Kobayashi curve in Theorem~\ref{thm:unif}.

\begin{lemma}
\label{lem:Rs}
For every~$j=1,\dots,g$ let~$\{t_{j,1},\dots,t_{j,m_j},\infty\}$ be the set of regular singular points of the differential operator~$L_j$, and assume that~$t_{j,i}=t(c_i)$ for~$i=1,\dots, s$ is the image of a cusp of~$\Gamma$ and that~$t_{j,i+s}=t(\tau_i)$ for~$i=1,\dots,m_j-s$, is the image of a point~$\tau_i\in\Po$ with stabilizer in~$\Gamma$ (or in~$\Gamma/\{\pm1\}$ if~$-1\in\Gamma$) of order~$n_i\ge1$.  
Then the operator~$L_j$ can be normalized to have Riemann scheme of the form
\[
\begin{pmatrix}
t_{j,1} & \cdots & t_{j,s} & t_{j,s+1} &  \cdots & t_{m_j} & \infty\\
0 & \cdots & 0 & 0 & \cdots  & 0 &  s^\infty_{j,1}\\
0 & \cdots & 0 & \frac{1+\mathrm{ord}_{\tau_1}\varphi_j'}{n_1} & \cdots  & \frac{1+\mathrm{ord}_{\tau_{m_j}}\varphi_j'}{n_{m_j}} & s^\infty_{j,2}
\end{pmatrix}
\] 
\end{lemma}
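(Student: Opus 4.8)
The plan is to compute the pair of characteristic exponents at each regular singular point separately, exploiting the fact (Proposition~\ref{prop:twide} together with Möller's Theorem~\ref{thm:unif}) that the two independent solutions of~$L_jv=0$ lift on~$\Po$ to~$f_j$ and to~$\varphi_jf_j$, so that the projective ratio of solutions is precisely the component~$\varphi_j$ of the modular embedding. Since adding a common constant to both exponents at a point is exactly the gauge freedom~$y\mapsto\prod_i(t-t_{j,i})^{\lambda_i}\,y$, which shifts exponents at the finite points and compensates at~$\infty$ so as to preserve the Fuchs relation, the intrinsic datum to pin down is the \emph{difference}~$s_2-s_1$ at each point. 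Once these differences are computed, I will normalize the lower exponent to~$0$ at every finite singular point and let the accumulated shifts be absorbed into the unspecified pair~$(s^\infty_{j,1},s^\infty_{j,2})$ at~$\infty$.

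At a cusp~$c_i$ the stabilizer is generated (after conjugation) by a parabolic~$\gamma=\begin{pmatrix}1 & h\\0 & 1\end{pmatrix}$, and its image~$\sigma_j(\gamma)=\begin{pmatrix}1 & \sigma_j(h)\\0 & 1\end{pmatrix}$ is again a non-trivial unipotent, as~$\sigma_j(h)\neq0$. By Theorem~\ref{thm:unif} the monodromy of~$L_j$ around~$t(c_i)$ is conjugate to~$\sigma_j(\gamma)$, hence a single unipotent Jordan block, so the two exponents coincide and~$s_2-s_1=0$ (the logarithmic case). Moreover~$f_j$ is bounded at the cusp while~$\varphi_j$ grows linearly in~$\tau\sim\frac{1}{2\pi i}\log\bigl(t-t(c_i)\bigr)$ by the equivariance~\eqref{eq:modem}; this exhibits~$f_j$ as the holomorphic solution of exponent~$0$ and~$\varphi_jf_j$ as the accompanying logarithmic one, and normalizing gives the entry~$(0,0)$.

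At a point~$\tau_i$ with stabilizer of order~$n_i$ I will work in a linearizing coordinate~$z$ centered at~$\tau_i$ (for instance~$z=(\tau-\tau_i)/(\tau-\overline{\tau_i})$), in which the generator~$\gamma_i$ acts as~$z\mapsto\zeta z$ with~$\zeta=e^{2\pi i/n_i}$ and the~$\Gamma$-invariant Hauptmodul satisfies~$t-t(\tau_i)\sim c\,z^{n_i}$. Choosing a linearizing coordinate~$u$ at the fixed point~$\varphi_j(\tau_i)\in\Po$ of the elliptic element~$\sigma_j(\gamma_i)$ (which has the same order~$n_i$), the equivariance~\eqref{eq:modem} reads~$U(\zeta z)=\zeta'U(z)$ for~$U=u\circ\varphi_j$ and~$\zeta'$ the rotation number of~$\sigma_j(\gamma_i)$; expanding~$U$ in powers of~$z$ forces~$U(z)\sim a\,z^{m_0}$ with~$m_0$ the least positive integer satisfying~$\zeta^{m_0}=\zeta'$, whence~$\ord_{\tau_i}\varphi_j'=m_0-1$. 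Since the ratio of solutions (namely~$\varphi_j$) is a Möbius transform of the normalized developing map~$w^{s_2-s_1}$ with~$w=t-t(\tau_i)$, and takes the finite interior value~$\varphi_j(\tau_i)$ at~$w=0$, comparing~$\varphi_j-\varphi_j(\tau_i)\sim z^{m_0}\sim w^{m_0/n_i}$ with~$w^{s_2-s_1}$ yields~$s_2-s_1=m_0/n_i=\frac{1+\ord_{\tau_i}\varphi_j'}{n_i}$; normalizing the lower exponent to~$0$ gives the stated entry. As a check, for~$j=1$ one has~$\varphi_1=\mathrm{id}$,~$\zeta'=\zeta$,~$m_0=1$, recovering the classical exponent difference~$1/n_i$ of the uniformizing equation (Corollary~\ref{cor:uni}). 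The point~$\infty$ is of the same two types and its difference is computed identically, while its actual exponents~$s^\infty_{j,1},s^\infty_{j,2}$ are then fixed by the Fuchs relation together with the normalization performed at the finite points.

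The main obstacle I anticipate is the elliptic-point computation: making rigorous the passage from~$\sigma_j$-equivariance to the leading exponent~$m_0$ of~$\varphi_j$ and its identification with~$1+\ord_{\tau_i}\varphi_j'$, while tracking that the rotation numbers~$\zeta$ and~$\zeta'$ of~$\gamma_i$ and~$\sigma_j(\gamma_i)$ need not coincide (they do for~$j=1$ but differ in general). One must also verify that the Möbius map relating the developing map to~$w^{s_2-s_1}$ is biholomorphic at the relevant value, which holds because~$\varphi_j(\tau_i)$ is an interior point of~$\Po$, so no chart swap towards~$\infty$ occurs. A secondary, more bookkeeping, difficulty concerns the resonant cases (all cusps, and any elliptic point with~$m_0/n_i\in\Z$), where the exponent difference is an integer: there one must argue directly from the boundedness of~$f_j$ and the growth of~$\varphi_j$ that the holomorphic solution realizes the lower exponent, rather than relying on the non-resonant normal form.
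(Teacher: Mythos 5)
Your elliptic-point analysis is sound, and in fact more careful than the paper's own treatment: you track the two rotation numbers $\zeta$ and $\zeta'$ of $\gamma_i$ and $\sigma_j(\gamma_i)$, which need not coincide, whereas the paper compares expansions in $(\tau-\tau_0)^{n_0}$ somewhat loosely; your normalization and Fuchs-relation bookkeeping at $\infty$ also matches the paper. The genuine gap is at the cusps. You deduce $s_2-s_1=0$ from the claim that a non-trivial unipotent local monodromy forces the two characteristic exponents to coincide; that implication is false. Unipotent monodromy only forces $s_2-s_1\in\Z$ together with the existence of a logarithmic solution. Concretely, the equation $t(t-1)y''-ty'+y=0$ has solutions $y_1=t$ and $y_2=t\log t+1$; its local monodromy at $t=0$ is a single non-trivial unipotent Jordan block, yet its exponents there are $(0,1)$. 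So unipotency alone cannot produce the entries $(0,0)$, and this equality is precisely the content the rest of the paper relies on: the point $t=0$ normalized to exponents $(0,0)$ feeds into the recursion of Lemma~\ref{lem:rec} (the factor $(n+1)^2$ in $d_{j,0}$ comes from the double root of the indicial polynomial) and into Theorem~\ref{thm:main}.

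What closes the gap --- and what the paper's proof actually does --- is a quantitative version of the growth remark you make only in passing. Writing $q_c$ for the local parameter at the cusp, one has $q_c=a_1(t-t(c))+O\bigl((t-t(c))^2\bigr)$ with $a_1\neq0$, the bounded lift gives $f_j=\sum_{m\ge \ord_c(f_j)}f_{j,m}q_c^m$, and Proposition 1.1 of~\cite{MZ} gives $\varphi_j(\tau)=a_0\tau+\sum_{m\ge0}\beta_mq_c^m$ with $a_0\neq0$. Hence the second solution equals $\mathrm{const}\cdot\log(t-t(c))\cdot y_j+\widetilde{y}_j$, where $\widetilde{y}_j=\bigl(\sum_m\beta_mq_c^m\bigr)\cdot f_j$ has order in $(t-t(c))$ at least equal to $\ord(y_j)$. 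If the exponents were distinct integers $s_1<s_2$, Frobenius theory in the logarithmic case would force the log-free solution $y_j$ to realize the larger exponent $s_2$ while the non-logarithmic part of the logarithmic solution realizes the smaller one, i.e.\ $\ord(\widetilde{y}_j)=s_1<\ord(y_j)$, contradicting the order bound above. It is this order comparison, not the Jordan form of the monodromy, that pins down $s_2=s_1=\ord_c(f_j)$; your proposal as written uses the boundedness of $f_j$ and the logarithmic growth of $\varphi_j$ only to label the two solutions, so the step establishing equality of the cusp exponents is missing.
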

\begin{proof}
We consider singularities given by cusps and by points in~$\Po$ separately. Let~$c$ be a cusp and~$t(c)$ a regular singular point of~$L_j$. Let~$y_{j}(t-t(c))=(t-t(c))^{s_1}\sum_{n\ge0}{y_{j,n}(t-t(c))^n}$ be a holomorphic solutions in~$t(c)$. By assumption, it lifts to a twisted modular form~$f_j$, which has Fourier expansion~$f_j(\tau)=\sum_{m\ge0}f_{j,m}q_c^m$ at the cusp~$c$. Since~$t$ is a Hauptmodul, $t-t(c)$ is a local parameter at the cusp~$c$, and one can express~$q_c$ as a power series in~$t-t(c)$, i.e., $q_c=a_1(t-t()c)+O((t-t(c))^2)$ with~$a_1\neq0$. It follows immediately that~$s_1=\ord_c(f_j)$.
Consider now the second (independent) solution of~$L_jv=0$, which lifts to~$\varphi_jf_j$. It is proven in~\cite{MZ}, Proposition 1.1, that at the cusp~$c$ the modular embedding function has expansion~$\varphi_j(\tau)=a_0\tau\+\sum_{m=0}^\infty{\beta_mq_c^m}$ for some~$a_0\neq0$. Since~$\tau=\log(q_c)$ up to a constant, using again the expression of~$q_c$ in~$(t-t(c))$ we see that the second solution is of the form~$\varphi_j(q_c)f_j(q_c)=\log((t-t(c))y_{j}(t-t(c))+\widetilde{y}_{j}(t-t(c))$ for some holomorphic function~$\widetilde{y}_{j}(t)$. This implies that the characteristic exponents~$(s_1,s_2)$ at the singular point~$t(c)$ satisfy~$s_2=s_1=\ord_c(f_j)$.  

Let now~$\tau_0\in\Po$ be such that~$t(\tau_0)$ is a regular singular point of~$L_j$, and denote by~$n_0$ the order of the stabilizer of~$\tau_0$ in~$\Gamma$ (or $\Gamma/\{\pm 1\}$ if~$-1\in\Gamma$). The local expansion of~$t$ near~$\tau_0$ is of the form
\[
t\=t(\tau_0)\+\sum_{m\ge 0}{t_m(\tau-\tau_0)^{n_0m}}\,,
\] 
with~$t_1\neq0$ since~$t$ is a Hauptmodul. Let~$y_{j}(t-t(\tau_0))=(t-t(\tau_0))^{s_1}\sum_{n\ge0}{y_{j,n}(t-t(\tau_0))^n}$ be the holomorphic solution that lifts to~$f_j$.
One sees immediately, by considering the local expansion at~$\tau_0$ of~$f_j$ as above, that~$s=\ord_{\tau_0}(f_j)/n_0$. The local expansion of~$\varphi_j(\tau)-\tau_0$ is a power series in~$(\tau-\tau_0)^{n_0}$, which implies that the second solutions of~$L_jv=0$ has no logarithmic terms, i.e., the singular point~$t(\tau_0)$ has characteristic exponents~$s_1\neq s_2$. This implies that the second solution is of the form~$y_{j,2}(t-t(\tau_0))=(t-t(\tau_0))^{s_2}\widetilde{y}_{j,2}(t-t(\tau_0))$ for some holomorphic function~$\widetilde{y}_{j,2}$, non zero in~$t(\tau_0)$.  
Again by comparing the local expansion of~$\varphi_j-\varphi_j(\tau_0)=y_{j,2}(t-t(\tau_0))/y_{j,1}(t-t(\tau_0))=(t(\tau)-t(\tau_0))^{s_2-s_1}(1+\cdots)$, one finds that~$s_2-s_1=\ord_{\tau_0}(\varphi_j-\varphi_j(\tau_0))/n_0=(\ord_{\tau_0}\varphi_j'+1)/n_0$.

The Riemann scheme of the differential equation~$L_j$ is then
\[
\begin{pmatrix}
t(c_1) & \cdots & t(c_s) & t(\tau_{1}) & \cdots & t(\tau_{m_j}) & \infty\\
\ord_{c_1}(f_j) & \cdots & \ord_{c_s}(f_j) & \ord_{\tau_{1}}(f_j)/n_1 & \cdots  & \ord_{\tau_{m_j}}(f_j)/n_{m_j} & \ord_{\tau_\infty}(f_j)/n_{\infty}\\
\ord_{c_1}(f_j) & \cdots & \ord_{c_s}(f_j) & \frac{(\ord_{\tau_1}(f_j)+\ord_{\tau_1}\varphi_j'+1)}{n_1} & \cdots & \frac{(\ord_{\tau_{m_j}}(f_j)+\ord_{\tau_{m_j}}\varphi_j'+1)}{n_{m_j}} & \frac{(\ord_{\tau_{\infty}}(f_j)+\ord_{\tau_{\infty}}\varphi_j'+1)}{n_\infty} 
\end{pmatrix}\,,
\]
where~$n_\infty=1$ if~$\tau_\infty$, the point where~$t$ has a pole, is either in~$\Po$ with trivial stabilizer or a cusp. In the latter case it also holds~$\ord_{\tau_{\infty}}\varphi_j'=0$.

The transformation~$v\mapsto \widetilde{v}=\prod_{i=1}^{m_j}(t_i-t)^{-s_1^{(j)}}\cdot v$ of the dependent variable~$v$ in~$L_jv=0$ produces a solution of an equivalent differential equation~$\widetilde{L_j}\widetilde{v}=0$ whose first characteristic exponent is~$0$ at all finite singular points, and whose second characteristic exponent at~$t_i$ is the difference~$s_2^{(t_i)}-s_1^{(t_i)}$ of the characteristic exponents of~$t_i$ with respect to the differential operator~$L_j$, as follows from an easy calculation. Then~$\widetilde{L_j}$ is the desired normalization of~$L_j$.
\end{proof}

\begin{remark}
The proof shows, by studying the differential equation at~$\infty$ with the parameter~$1/t$, that the holomorphic solution lifts to a modular forms with divisor supported at the unique point where the Hauptmodule~$t$ has a pole. In particular,~$\eta_1\neq0$,
\end{remark}

\begin{lemma}
\label{lem:rec}
Let the differential operator~$L_j$ be normalized as in Lemma~\ref{lem:Rs}. 
Assume that~$t_1=0$ is a regular singular point with characteristic exponents~$(0,0)$, and let~$y_j(t)=1+\sum_{n\ge1}y_{j,n}t^n$ be the unique normalized holomorphic solution in~$t=0$. The coefficients~$\{y_{j,n}\}_n$ of~$y_j(t)$ satisfy a recursion
\[
d_{j,0}(n)\cdot y_{j,n+1}\=d_{j,1}(n)\cdot y_{j,n}+\cdots d_{j,m_j-1}(n)\cdot y_{j,n-m_j+1}\,,
\]
for every~$n\ge1$ with inital datum~$y_{j,0}=1$, where~$d_{j,0},\dots,d_{j,m_j-1}$ are polynomials in~$n$ which depends on the coefficients of~$L_j$. Moreover
\[
d_{j,0}\=(n+1)^2\prod_{i=2}^{m_j}(-t_{j,i})\,,\quad
d_{j,m_j-1}(n)\=(n-m_j+2+s^\infty_{j,1})(n-m_j+2+s^\infty_{j,2})\,.
\]
\end{lemma}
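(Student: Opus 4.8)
The plan is to substitute the normalized power series into $L_j$, extract the coefficient of each power of $t$, and recognize the two extreme coefficients of the resulting recursion as the indicial polynomials of $L_j$ at $0$ and at $\infty$. After the normalization of Lemma~\ref{lem:Rs}, $L_j$ is a Fuchsian operator on $\pro^1(\C)$ whose finite regular singular points are $t_{j,1}=0,t_{j,2},\dots,t_{j,m_j}$, so I may normalize the leading coefficient to be the monic polynomial $p_j(t)=\prod_{i=1}^{m_j}(t-t_{j,i})=t\prod_{i=2}^{m_j}(t-t_{j,i})$ of degree $m_j$; regularity at $\infty$ (Fuchs's theorem) then forces $\deg q_j\le m_j-1$ and $\deg r_j\le m_j-2$. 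Writing $p_j=\sum_kp_kt^k$, $q_j=\sum_kq_kt^k$, $r_j=\sum_kr_kt^k$, note in particular $p_0=0$, $p_1=\prod_{i=2}^{m_j}(-t_{j,i})$ and $p_{m_j}=1$.

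First I would substitute $y_j(t)=\sum_{n\ge0}y_{j,n}t^n$, $y_j'=\sum_n(n+1)y_{j,n+1}t^n$, $y_j''=\sum_n(n+2)(n+1)y_{j,n+2}t^n$, and collect the coefficient of $t^n$ in $L_jy_j$. Because $p_0=0$ the would-be $y_{j,n+2}$-term disappears, so the highest index is $y_{j,n+1}$; the degree bounds $\deg p_j=m_j$, $\deg q_j\le m_j-1$, $\deg r_j\le m_j-2$ guarantee the lowest index is $y_{j,n-m_j+2}$. Setting $[L_jy_j]_n=0$ and moving all but the top term to the right-hand side produces exactly the $m_j$-term recursion of the statement, with each $d_{j,\ell}(n)$ an explicit polynomial in $n$ built from the $p_k,q_k,r_k$.

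It then remains to identify the two boundary coefficients. Applying $L_j$ to $t^\rho$, the lowest-order contribution in $t$ is $I_0(\rho)\,t^{\rho-1}$ with indicial polynomial $I_0(\rho)=p_1\rho(\rho-1)+q_0\rho=\rho\bigl(p_1\rho-p_1+q_0\bigr)$; its roots are the characteristic exponents at $0$. Since these are assumed to be $(0,0)$, I get $q_0=p_1$, hence $I_0(\rho)=p_1\rho^2$. As the coefficient of $y_{j,n+1}$ in $[L_jy_j]_n$ is precisely $I_0(n+1)$, this yields $d_{j,0}(n)=(n+1)^2p_1=(n+1)^2\prod_{i=2}^{m_j}(-t_{j,i})$. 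Dually, the top-order contribution of $L_j(t^\rho)$ is $I_\infty(\rho)\,t^{\rho+m_j-2}$ with $I_\infty(\rho)=\rho(\rho-1)+q_{m_j-1}\rho+r_{m_j-2}=\rho^2+(q_{m_j-1}-1)\rho+r_{m_j-2}$; passing to the coordinate $1/t$ shows its roots are $-s^\infty_{j,1},-s^\infty_{j,2}$, so $I_\infty(\rho)=(\rho+s^\infty_{j,1})(\rho+s^\infty_{j,2})$. Since the coefficient of $y_{j,n-m_j+2}$ in $[L_jy_j]_n$ is $I_\infty(n-m_j+2)$, I obtain $d_{j,m_j-1}(n)=(n-m_j+2+s^\infty_{j,1})(n-m_j+2+s^\infty_{j,2})$. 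Finally $d_{j,0}(n)=(n+1)^2p_1\ne0$ for all $n\ge0$, because the singular points $t_{j,i}$ are distinct and nonzero, so the recursion together with the normalization $y_{j,0}=1$ determines all the $y_{j,n}$ uniquely.

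The computation is essentially mechanical, so I expect no genuine obstacle; the only points requiring care are the index-shift and sign bookkeeping when rearranging $[L_jy_j]_n=0$ into the stated recursion, and the use of the $(0,0)$-exponent hypothesis to obtain the clean relation $q_0=p_1$ — equivalently, the verification that the normalization of Lemma~\ref{lem:Rs} has indeed produced the leading and trailing behavior recorded above.
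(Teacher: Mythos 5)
Your proof is correct and follows essentially the same route as the paper: substitute the normalized power series, read off the $m_j$-term recursion (the $y_{j,n+2}$ term dropping out because $t=0$ is a root of the leading polynomial), and identify the two extreme coefficients with the indicial polynomials at $0$ and at $\infty$, using the $(0,0)$-exponent hypothesis to get $q_0=p_1$. The only cosmetic difference is that the paper encodes the indicial data at $\infty$ through the partial-fraction residues $a_{j,i},c_{j,i}$ (its equation~\eqref{eq:indi}), whereas you read it off directly from the top-degree coefficients $q_{j,m_j-1},r_{j,m_j-2}$ — the same computation in different bookkeeping.
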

\begin{proof}

That the coefficients of a holomorphic solution of a Fuchsian equation satisfy a linear recursion of length~$m_j$ is well known (Frobenius method). We only have to compute the coefficients~$d_0$ and~$d_{m_j-1}$. Since the top row of the Riemann scheme of~$L_j$ has zeros at all the finite singularities, it follows that~$L_j$ is of the form~$L_j=\frac{d^2}{dt^2}+\hat{q}_j(t)\frac{d}{dt}+\hat{r}_j(t)$, where
\begin{equation}
\label{eq:p,q,r}
\hat{q}_j(t)=\sum_{i=1}^{m_j}\frac{a_{j,i}}{t-t_{j,i}}\=\frac{q_j(t)}{tp_j(t)}\,,\quad \hat{r}_j(t)=\sum_{i=1}^{m_j}\frac{c_{j,i}}{t-t_{j,i}}\=\frac{r_j(t)}{tp_j(t)}\,,\quad a_{j,i},c_{j,i}\in\C
\end{equation}
where~$1-a_{j,i}=s_2^{(t_{j,i})}+s_1^{(t_{j,i})}$ and~$p_j(t)=\prod_{i=2}^{m_j}(t-t_{j,i})$ (see~\cite{Yo}, Chapter 2.6). 
The coefficients~$a_{j,i}$ and~$c_{j,i}$ are related with the characteristic exponents~$s_{j,1}^\infty$ and~$s_{j,2}^\infty$ at~$\infty$ via 
\begin{equation}
\label{eq:indi}
(x-s^\infty_{j,1})(x-s^\infty_{j,2})\= x(x-1)+\biggl(2-\sum_{i=1}^{m_j}{a_{j,i}}\biggr)x+\sum_{i=1}^{m_j}{c_{j,i}t_{j,i}}
\end{equation}
as follows from the computation of the the indicial equation of~$L$ at~$\infty$. Frobenius's method implies that the coefficient~$d_{j,m_j-1}$ of the recursion is given by
\[
d_{j,m_j-1}(n)\=(n-m_j+2)(n-m_j+1)\cdot p_{j,m_j-1}\+(n-m_j+2)\cdot q_{j,m_j-1}+r_{j,m_j-2}\,,
\]
where~$p_{j,a},q_{j,a}$, and~$r_{j,a}$ denote the coefficient of~$t^a$ of~$p_j(t),q_j(t),$ and~$r_j(t)$ respectively. From the definition of these polynomials~\eqref{eq:p,q,r}, it follows that
\[
d_{j,m_j-1}(n)\=(n-m_j+2)(n-m_j+1)\+(n-m_j+2)\sum_{i=1}^{m_j}{a_{j,i}}+\sum_{i=1}^{m_j}{c_{j,i}t_{j,i}}\,,
\]
which, thanks to the relation~\eqref{eq:indi}, equals the expression for~$d_{j,m_j-1}$ in the statement. 

Similarly, the coefficient~$d_{j,0}$ of the recursion is identified via Frobenius's method with 
\[
d_{j,0}(n)\=n(n+1)\cdot p_{j,0}\+(n+1)\cdot q_{j,0}\,.
\]
The definition of~$p_j(t)$ and~$q_j(t)$ implies immediately that~$p_{j,0}=q_{j,0}=\prod_{i=2}^{m_j}(-t_{j,i})$.
\end{proof}

\subsection{Hilbert modular varieties}
\label{sec:HMV}
Let~$F$ be a totally real field of degree~$g$ over~$\Q$ and let~$\OO_F$ denote its ring of integers. Let~$R$ be a ring. An~\emph{abelian variety with real multiplication by $\mathcal{O}_F$} is an abelian variety~$A\to\mathrm{Spec}(R)$ of dimension~$g$ together with a ring injection~$\iota\colon\mathcal{O}_F\to\mathrm{End}_R(A)$ such that the following condition holds. Let~$A^\vee$ be the dual abelian variety, and notice that~$\mathcal{O}_F$ acts on~$A^\vee$ by duality. Define 
\[
M_A:=\{\lambda\colon A\to A^t\,:\,\lambda\circ\iota(r)=\iota(r)^\vee\circ\lambda\,,\,\forall r\in\OO_F \}
\]
to be the~$\OO_F$-module of symmetric~$\OO_F$-linear homomorphisms from $A$ to $A^\vee$. We assume that the Deligne-Pappas condition~$A\otimes_{\OO_F}M_A\simeq A^\vee$ holds (this condition is always satisfied if~$\mathrm{char}(R)=0$). See~\cite{GoBook} for details. 

For~$n\ge3$ there exists a fine moduli scheme~$\mathcal{M}_F^n\to\Spec(\Z[(nd_F)^{-1}])$, where~$d_F$ is the discriminant of~$F$, classifying isomorphism classes of abelian varieties with real multiplication by~$\OO_F$ and full level~$n$ structure. By forgetting the level structure one gets a scheme~$\mathcal{M}_F\to\Spec(\Z)$, which is a coarse moduli space for abelian varieties with real multiplication by~$\OO_F$. 
The scheme~$\mathcal{M}_F$ is the disjoint union of components~$\mathcal{M}_F(\fa)$, where~$\fa$ runs over the the strict class group~$Cl(F)^+$ of~$F$. For~$R=\C$ the component~$\mathcal{M}_L(\fa)(\C)$ corresponds to the quotient of~$\Po^g$ by the Hilbert modular group~$\SL_2(\OO_F\oplus\fa)$. 

\smallskip

Let~$p\in\Z$ be a prime unramified in~$F$. Then
\[
pF=\prod_{l=1}^r\fp_l\,,\quad f_l:=\deg[\mathcal{O}_F/\fp_l:\F_p]\,,
\]
where~$\fp_1,\dots,\fp_r$ are prime ideals in~$F$. Denote by~$\F$ the minimal finite field that contains copies of every residue field~$k_l:=\mathcal{O}_F/\fp_l$ (one has $[\F:\F_p]=\mathrm{lcm}(f_1,\dots,f_r)$), and denote by~$W(\F)$ its infinite Witt vectors. There is an idempotent decomposition 
\begin{equation}
\label{eq:idemp}
\mathcal{O}_F\otimes_\Z W(\F)\=\bigoplus_{\mathrm{Emb(\OO_F,W(\F))}}W(\F)
\end{equation}
with idempotents~$e_{(l,i)}$ for~$1\le l\le r$ and~$1\le i\le f_l$. We write accordingly~$\sigma_{(l,i)}$ for the embeddings of~$\OO_F$ into $W(\F)$ and assume that the indexing is chosen so that
\begin{equation}
\label{eq:normf}
\sigma\circ\sigma_{(l,i)}\=\sigma_{(l,i+1)}\quad \forall\,l,i\,
\end{equation}
where~$\sigma$ is the Frobenius homomorphism on~$W(\F)$, with the convention that~$(j,f_j+1)=(j,1)$. We still use~$e_{(l,i)}$ and~$\sigma_{(l,i)}$ to denote the corresponding objects modulo~$p$.

\smallskip

Let~$R_0$ be an~$\F$-algebra and consider an Abelian variety~$g\colon A\to\Spec(R_0)$ with real multiplication by~$\OO_F$. The space~$g_*\Omega^1_{A/R}$ is a free~$\OO_F\otimes R_0$-module of rank one, and let~$\omega\in g_*\Omega^1_{A/R_0}$ be a generator (see Section 2 of \cite{GoHasse} for details). The elements
\begin{equation}
\label{eq:basisRM}
\omega_{l,i}:=e_{(l,i)}\omega\,,\quad 1\le l\le r\,,\;1\le i\le f_l
\end{equation}
form an~$R_0$-basis of~$g_*\Omega^1_{A/R}$ which respects the action of real multiplication, i.e., each~$\omega_{(l,i)}$ is an eigenform for the action of~$\OO_F$ on~$g_*\Omega^1_{A/R_0}$. Thanks to the normalization~\eqref{eq:normf}, the action of the Cartier operator~$\mathcal{C}$ on~$H^0(A, \Omega^1_{A/R_0})$ is given for every pair of indices~$(l,i)$ simply by~$\mathcal{C}(\omega_{(l,i)})=\alpha_{l,i}\cdot\omega_{(l,i+1)}$ for some~$\alpha_{l,i}\in R_0$.
In other words, the action of~$\mathcal{C}$ with respect to the basis~$\{\omega_{(l,i)}\}_{(l,i)}$ is represented by a block matrix with non-zero square blocks of size~$f_l$ for~$1\le l\le r$ and zero elsewhere.

\subsection{Curves in Hilbert modular varieties}
\label{sec:CHMV}
Let $\phi\colon Y\to\mathcal{A}_g$ be an affine Kobayashi geodesic, seen as a curve in a (complex) Hilbert modular variety~$\mathcal{M}_F(\C)$ (Theorem~\ref{sec:KG}). By a slight abuse of notation, we denote it by~$\phi\colon Y\to \mathcal{M}_F$.
If~$\mathcal{M}_F^*$ denotes a toroidal compactification of the Hilbert modular scheme~\cite{Chai}, we denote by~$\overline{\phi}\colon\overline{Y}\to\mathcal{M}_F^*$ the corresponding compactificaation of~$\phi\colon Y\to \mathcal{M}_F$ to~$Y$, and define~$C:=\overline{Y}\setminus Y$ to be the finite set of cusps. 

Since~$\phi\colon Y\to\A_g$ can be defined over a number field~$K$ and~$\mathcal{M}_F$ is defined over $\Spec(\Z)$, by considering the Zarisky closure of~$Y$ along the embedding~$\phi\colon Y\to\mathcal{M}_F$, the Kobayashi geodesic~$\phi\colon Y\to\A_g$ can be endowed with an integral structure. From now on we consider~$\phi\colon Y\to\A_g$ as given by a model~$f\colon \mathcal{X}\to Y$ defined over~$\Spec(R)$ for some ring~$R=\mathcal{O}_K[S^{-1}]$, where~$\mathcal{O}_K$ is the ring of integer of the number field~$K$, and~$S\subset\Z$ is a finite set of primes. Similarly, we denote by ~$\overline{f}\colon\overline{\mathcal{X}}\to\overline{Y}$ a model for the compactification~$\overline{Y}\to\mathcal{M}_F^*$ defined over~$\Spec(R)$.
At the expense of enlarging the set of primes~$S$, we assume that the model~$\overline{f}\colon \overline{\mathcal{X}}\to \overline{Y}$ has good reduction at every prime~$\fp\in R$ above the rational primes outside~$S$. We are currently not able to determine the set~$S$ for a general Kobayashi curve (it has been determined for certain models of Teichmüller curves in the moduli space of genus two curves and discriminant~$D=13,17$ in Section 10 of~\cite{BM}); nevertheless, ~$S$ must contain the primes that ramify in~$F$ and in~$K$, and the primes~$p$ for which the solution space of the differential equation~$L_i=0\mod p$ has dimension bigger than one (over~$\F(t^p)$). This, as follows from a result of Katz~\cite{K70}, only happens for finitely many primes. Later, we will consider a possibly larger set~$S$ containing also the primes dividing the order of the elliptic points of~$Y$.

Let~$B$ be an~$R$ algebra. A consequence of Theorem~\ref{sec:KG} is that the set~$\{\omega_i\}_{i=1}^g$ (notation as in Section~\ref{sec:PF}) gives a~$B$-basis of~$f_*\Omega^1_{\mathcal{X}/B}$ made of holomorphic eigenforms for the action of real multiplication on the fibers of~$f\colon \mathcal{X}\to Y$. The base change~$\mathcal{X}_p:=\mathcal{X}\otimes_B\bigl(B\otimes_\Z\F\bigr)$ makes~$\mathcal{X}\to\Spec(B)$ into a scheme~$f_p\colon\mathcal{X}_p\to R_0$ over the~$\F$-algebra~$R_0:=B\otimes_\Z\F$. The~$R_0$-module~$(f_p)_*\Omega^1_{\mathcal{X}_p/R_0}$ has an~$R_0$-basis~$\{\omega_{0j}\}_{j=1}^g$ obtained by the base-changing the basis~$\{\omega_j\}_{j=1}^g$ coming from the action of real multiplication on~$\mathcal{X}\to Y$, and it has the basis~$\{\omega_{(l,i)}\}_{(l,i)}$ coming from the idempotent decomposition~\eqref{eq:basisRM}. Since the elements of both basis are eigenforms for the action of~$\OO_L$ and~$p\not\in S$ is of good reduction, one has the following result comparing the two basis. 
\begin{lemma}
\label{lem:basisrel}
Let~$\{\omega_{0j}\}_{j=1}^g$ and~$\{\omega_{(l,i)}\}_{(l,i)}$, for~$l=1,\dots,r$ and~$i=1,\dots,f_i$, be the basis of eigenforms of~$(f_p)_*\Omega^1_{\mathcal{X}_p/R_0}$ described above. For every~$j=1,\dots,g$ there exists a choice of indices~$(l_j,i_j)$ and a non-zero~$\beta_{j}\in R_0$ such that
\[
\omega_{0j}\=\beta_j\cdot\omega_{(l_j,i_j)}\,,
\]
and~$(l_j,i_j)\neq(l_k,i_k)$ if $j\neq k$.
\end{lemma}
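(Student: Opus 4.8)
The plan is to exploit the eigenspace decomposition of~$(f_p)_*\Omega^1_{\mathcal{X}_p/R_0}$ coming from real multiplication and to observe that both bases in the statement are adapted to it. First I would recall from Section~\ref{sec:HMV} that, since~$\overline{f}$ has good reduction at the primes above~$p\not\in S$ and the fibers carry an~$\OO_F$-action, the module~$(f_p)_*\Omega^1_{\mathcal{X}_p/R_0}$ is free of rank one over~$\OO_F\otimes_\Z R_0$. Because~$R_0=B\otimes_\Z\F$ is an~$\F$-algebra and~$\F$ contains copies of all residue fields~$k_l$, tensoring the idempotent decomposition~\eqref{eq:idemp} over~$W(\F)$ with~$R_0$ yields~$\OO_F\otimes_\Z R_0=\bigoplus_{(l,i)}R_0$, the~$(l,i)$-summand being cut out by the reduction of the idempotent~$e_{(l,i)}$. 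Applying this to the rank-one module produces a decomposition into~$g$ eigenspaces, each free of rank one over~$R_0$ and spanned by~$\omega_{(l,i)}=e_{(l,i)}\omega$; the count~$\sum_l f_l=g=[F:\Q]$ matches the rank.

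Next I would show that each~$\omega_{0j}$ lies in a single eigenspace. By Theorem~\ref{sec:KG} and the discussion in Section~\ref{sec:PF}, the forms~$\omega_j$ are eigenforms for the~$\OO_F$-action on~$f_*\Omega^1_{\mathcal{X}/B}$. Since this action is defined over~$R$ and the reduction is good, the base-changed~$\omega_{0j}$ remain eigenforms for the induced~$\OO_F$-action. Being an eigenform means~$\OO_F$ acts on~$\omega_{0j}$ through a single ring homomorphism~$\OO_F\to R_0$; as~$R_0$ is an~$\F$-algebra, this homomorphism factors through the reduction of exactly one of the embeddings~$\sigma_{(l,i)}$. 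Equivalently, precisely one idempotent acts as the identity on~$\omega_{0j}$ while the others annihilate it, so~$\omega_{0j}$ lies in the line~$R_0\cdot\omega_{(l_j,i_j)}$ for a unique index~$(l_j,i_j)$, giving~$\omega_{0j}=\beta_j\,\omega_{(l_j,i_j)}$ with~$\beta_j\in R_0$.

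Finally I would conclude. As~$\{\omega_{0j}\}_{j=1}^g$ is an~$R_0$-basis, each~$\omega_{0j}\neq0$, forcing~$\beta_j\neq0$. If two indices coincided, say~$(l_j,i_j)=(l_k,i_k)$ with~$j\neq k$, then~$\omega_{0j}$ and~$\omega_{0k}$ would both be~$R_0$-multiples of~$\omega_{(l_j,i_j)}$ and hence~$R_0$-linearly dependent, contradicting that they are distinct members of a basis. Therefore~$j\mapsto(l_j,i_j)$ is injective, and since it is a map between two index sets of the same cardinality~$g$, it is in fact a bijection.

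The only delicate point is the compatibility of the eigenform property with reduction modulo~$p$, namely that the characteristic-zero eigenforms~$\omega_j$ specialize to eigenforms spanning \emph{distinct}~$p$-adic eigenspaces; this is exactly where good reduction at~$p\not\in S$ and the integrality of the~$\OO_F$-action are used. The resulting matching~$j\mapsto(l_j,i_j)$ between the archimedean embeddings~$\sigma_j$ and the~$p$-adic embeddings~$\sigma_{(l,i)}$ is precisely the combinatorial datum that later governs the permutation~$\mathscr{l}_p$ appearing in the congruences.
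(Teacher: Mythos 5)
Your proof is correct and is essentially the paper's own (implicit) argument: the paper states this lemma with no proof at all, justifying it only by the sentence immediately preceding it --- both bases consist of eigenforms for the $\OO_F$-action and $p\not\in S$ is of good reduction --- and your write-up simply makes that one-liner explicit, via the idempotent decomposition of the rank-one $\OO_F\otimes_\Z R_0$-module, the observation that each reduced eigenform spans a single eigenline, and linear independence of basis elements to get injectivity (hence bijectivity) of $j\mapsto(l_j,i_j)$. The only caveat is that your step ``the character $\OO_F\to R_0$ factors through exactly one $\sigma_{(l,i)}$'' requires $\Spec(R_0)$ to be connected --- when $\OO_K\otimes_\Z\F$ is a nontrivial product (e.g.\ $p$ split in $K$, or inert with several embeddings of the residue field into $\F$) the index $(l_j,i_j)$ can genuinely differ on different factors, so the statement must be read component-by-component after fixing a prime $\fp\subset\OO_K$ over $p$ --- but this imprecision sits in the paper's own formulation of the lemma (and is implicitly resolved there only later, when the proof of Corollary~\ref{cor:cong} works with tuples indexed by $\fp\mid p$), so it is not a gap your argument introduces.
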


\subsection{Expansion coefficients and Cartier matrix}
\label{sec:expc}
The solutions of Picard-Fuchs differential equations have been studied by Katz in terms of expansion coefficients, which are defined below in the case of one-forms. The details and the general result can be found in Katz's paper~\cite{K84}.

Let~$R$ be a Noetherian ring, let~$B$ be a smooth $R$-algebra, and let~$X$ be a smooth~$B$-scheme of relative dimension~$N\ge1$. Let~$O\in X(B)$ be a marked point and let~$(T_1,\dots,T_N)$ be local coordinates on~$X$ at~$O$. 

Let~$\widehat{X}$ denote the formal completion of $X$ along~$O$. 
Via an isomorphism of pointed formal schemes~$\mathrm{Spf}(B[\![T_1,\dots,T_N]\!])\simeq \widehat{X}$, to an element~$\omega\in H^0_{dR}\bigl(X,\Omega^1_{X/B})$ is associated the formal expansion
\begin{equation}
\label{eq:expcoef}
\omega\;\sim\;\sum_{1\le k\le N}\sum_{I\in(\Z_{\ge0})^N}{a(\omega;k,I)\cdot T^I\,\frac{dT_k}{T_k}}\,,\quad a(\omega;k,I)\in B
\end{equation}
where~$T^I:=\prod_{v=1}^N{T_v^{I_v}}$ if~$I=(I_1,\dots,I_N)$. The elements~$a(\omega;k,I)$ are called~\emph{expansion coefficients of $\omega$}. 

Let~$k$ be a perfect field of characteristic~$p>0$ and set~$R=W(k)$ its ring of Witt vectors. Denote by~$B_0:=B\otimes_R{k}$ and $X_0:=X\otimes_B{B_0}$ the corresponding objects modulo~$p$. The reduction of the expansion coefficients describe the action of the Cartier operator~$\mathcal{C}\colon H^0(X_0,\Omega^1_{X_0/B_0})\to H^0(X_0^{(p)},\Omega^1_{X_0^{(p)}/B_0})$ with respect to a suitable basis as follows. Given~$\omega\in\Omega^1_{X_0/B_0}$ with formal expansion~\eqref{eq:expcoef} the action of~$\mathcal{C}$ on~$\omega$ is given by (see Corollary~5.3 of~\cite{K85})
\begin{equation}
\label{eq:Cexp}
\mathcal{C}(\omega)\=\sum_{1\le k\le N}\sum_{I\in (\Z_{\ge0})^N}{a(\omega;k,pI)\cdot T^I\,\frac{dT_k}{T_k}}\,.
\end{equation}
There exists (Lemma 4.1 of~\cite{K85}) a set of indices~$(k_v,E_v)$, where~$v=1,\dots,N$, for which the map
\[
H^0(X,\Omega^1_{X/B})\to R^N\,,\quad \omega\mapsto(,\cdots,a(\omega; k_v,I_v),\dots,)
\]
is an isomorphism of~$R$-modules. Fix such a choice of indices, and let~$\eta=\{\eta_1,\dots,\eta_N\}$ be the basis of~$H^0(X,\Omega^1_{X/B})$, such that~$a(\eta_{i};k_v,W_v)=\delta_{i,v}$, where~$\delta_{i,v}$ is the Kronecker symbol. 
We denote by $\eta_0=\{{\eta_0}_1,\dots,{\eta_0}_N\}$ the corresponding object modulo~$p$.
For~$m\ge0$ define the matrix~$E(\eta;m)$ by
\begin{equation}
\label{eq:Em}
E(\eta;m)_{v,j}\:= a(\eta_j; k_v,p^mI_v)\,,\quad v\,,j\=1,\dots,N\,.
\end{equation}
By using~\eqref{eq:Cexp}, the action of the Cartier operator on~$\{\eta_{01},\dots,\eta_{0N}\}$ can be described in terms of the matrix~$E(m)$ (see Lemma 6.1 in~\cite{K85}).  
\begin{lemma}
\label{lem:Em}
Let~$E(\eta;m)$ be as in~\eqref{eq:Em}. Then $E(\eta;m)\mod p$ is the matrix of the $m$-th iterate of the Cartier operator~$\mathcal{C}$ with respect to the bases~$\{\eta_{01},\dots,\eta_{0N}\}$ and~$\{\eta_{01}^{(p^m)},\dots,\eta_{0N}^{(p^m)}\}$. In particular, the following congruence
\[
E(\eta;m+1)\=E(\eta;m)^{(p)}E(\eta;1)\mod p,
\] 
where~$E(m)^{(p)}$ is obtained by raising to the~$p$-th power every entry of~$E(m)$, holds for every~$m\ge0$.
\end{lemma}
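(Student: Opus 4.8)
The plan is to prove the two claims in the order they are stated, deducing the congruence $E(\eta;m+1)\equiv E(\eta;m)^{(p)}E(\eta;1)\bmod p$ as a formal consequence of the matrix interpretation together with the cocycle property of the Cartier operator under Frobenius twists. First I would establish that $E(\eta;m)\bmod p$ represents the $m$-th iterate $\mathcal{C}^m$ in the two prescribed bases. The starting point is the defining relation~\eqref{eq:Cexp}, which says that applying $\mathcal{C}$ to a one-form replaces each expansion coefficient $a(\omega;k,I)$ by $a(\omega;k,pI)$. Iterating this $m$ times shows that the expansion coefficients of $\mathcal{C}^m(\omega)$ are $a(\omega;k,p^m I)$. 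Applying this to each basis element $\eta_{0j}$ and reading off the coefficient indexed by the distinguished pair $(k_v,I_v)$ gives precisely $E(\eta;m)_{v,j}=a(\eta_j;k_v,p^m I_v)\bmod p$, which by definition of the dual basis $\{\eta_{0v}^{(p^m)}\}$ is the matrix entry of $\mathcal{C}^m$ in the target basis. The only care needed here is bookkeeping: the Cartier operator is $p^{-1}$-linear (Frobenius-semilinear), so its $m$-fold iterate lands in the $p^m$-twisted module $H^0(X_0^{(p^m)},\Omega^1)$, and the isomorphism of Lemma~4.1 of~\cite{K85} must be used consistently on each twist.

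Having the matrix interpretation in hand, the congruence follows from the factorization $\mathcal{C}^{m+1}=\mathcal{C}^{m}\circ\mathcal{C}$, but one must track the semilinearity correctly. Writing $\mathcal{C}$ in the bases $\{\eta_{0j}\}\to\{\eta_{0j}^{(p)}\}$ by the matrix $E(\eta;1)$, and $\mathcal{C}^m$ in the bases $\{\eta_{0j}^{(p)}\}\to\{\eta_{0j}^{(p^{m+1})}\}$ by the Frobenius-twist $E(\eta;m)^{(p)}$ (obtained by raising each entry to the $p$-th power, exactly the operation appearing in the statement), composition of the two maps gives the matrix $E(\eta;m)^{(p)}E(\eta;1)$ for $\mathcal{C}^{m+1}$ in the bases $\{\eta_{0j}\}\to\{\eta_{0j}^{(p^{m+1})}\}$. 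Since $E(\eta;m+1)\bmod p$ also represents $\mathcal{C}^{m+1}$ in these same bases by the first part, the two matrices agree modulo $p$, which is the desired congruence.

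The only genuinely delicate point, and the step I expect to require the most care, is the appearance of the Frobenius twist $(\,\cdot\,)^{(p)}$: it arises precisely because $\mathcal{C}$ is $p^{-1}$-linear over $B_0$, so that expressing the composite $\mathcal{C}^m\circ\mathcal{C}$ in a fixed basis forces one to conjugate the matrix of $\mathcal{C}^m$ by the relative Frobenius, turning ordinary matrix multiplication into the twisted product $E(\eta;m)^{(p)}E(\eta;1)$. Keeping the semilinear bookkeeping consistent—ensuring each power $\mathcal{C}^k$ is read in the correct source and target twist, and that the dual-basis identifications $a(\eta_j;k_v,p^m I_v)=\delta$-normalizations transport correctly under these twists—is where the proof must be done carefully; everything else is a direct substitution into~\eqref{eq:Cexp}. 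Both assertions then follow, and we are done.
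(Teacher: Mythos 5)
Your proposal is correct. Note that the paper itself gives no proof of Lemma~\ref{lem:Em}: it is quoted directly from Katz (Lemma 6.1 of~\cite{K85}), with the preceding sentence pointing to~\eqref{eq:Cexp} as the underlying mechanism. Your argument is a faithful reconstruction of that standard proof: iterating~\eqref{eq:Cexp} to see that the expansion coefficients of $\mathcal{C}^m(\eta_{0j})$ at the distinguished indices $(k_v,I_v)$ are $a(\eta_j;k_v,p^mI_v)\bmod p$ (the $\delta$-normalization of $\eta$ being preserved under Frobenius twist, since $\delta_{iv}^{p^m}=\delta_{iv}$), and then obtaining the congruence from $\mathcal{C}^{m+1}=\mathcal{C}^m_{(p)}\circ\mathcal{C}$ together with the base-change/semilinearity fact that the $m$-th Cartier iterate of $X_0^{(p)}$ has matrix $E(\eta;m)^{(p)}$ in the twisted bases. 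Your identification of the semilinear bookkeeping as the one delicate point is exactly right, and you handle it correctly, so the proof goes through.
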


\subsection{Integrality of solutions of differential equations}
Let~$\phi\colon Y\to\A_g$ be a genus zero affine Kobayashi geodesics with parameter~$t$, and let~$f\colon{\mathcal{X}}\to Y$ be the associated integral model defined over the ring~$R=\mathcal{O}_K[S^{-1}]$ as in Section~\ref{sec:CHMV}. In this section we assume, possibly enlarging~$S$, that the primes dividing the order of elliptic points of~$Y$ are in~$S$. 
Fix a basis~$\{\omega_j\}_{j=1}^g$ of~$f_*\Omega^1_{\mathcal{X}/R}$ of eigenforms for the action of real multiplication by~$\OO_L$, corresponding to the holomorphic eigenforms~$\omega_j\in\mathcal{L}_j\subset\mathbb{L}_j$ for~$j=1,\dots,g$, of Section~\ref{sec:PF}. Let~$L_1,\dots,L_g$ be the associated Picard-Fuchs differential operators as in~\eqref{eq:PF0}. 
Since~$Y$ has genus zero and is affine, by removing from it the points corresponding to singularities of the differential operators~$L_j$, we consider the family~$\mathcal{X}\to Y$ as defined over the~$R$-algebra $B:=R[t][\mathrm{lcm}(p_1(t),\dots,p_g(t))^{-1}]$ where~$p_j(t)$ is as in~\eqref{eq:PF0}. 

By construction, the differential operator~$L_j$ satisfies~$\nabla(L_j)\omega_j=0$ in cohomology, where~$\nabla$ is the Gauss-Manin connection and~$\omega_j\in\mathcal{L}_j$. The link between the theory over number fields and in postive characteristic is given via expansion coefficients by the following result of Katz.
\begin{theorem*}[Katz~\cite{K84}]
\label{thm:K84}
Consider the formal expansion attached to~$\omega_j\in\mathcal{L}_j$ 
\[
\omega_j\;\sim\;\sum_{1\le k\le g}\sum_{I\in(\Z_{\ge0})^g}{a(\omega_j;k,I)\cdot T^I\,\frac{dT_k}{T_k}}\,,\quad j=1,\dots,g\,.
\]
If~$\nabla(L_j)(\omega_j)=0$ holds in~$H^1_{dR}(\mathcal{X}/B)$, then
\[
L_j\,a(\omega_j,k,I)\equiv 0\mod I_k B\,,
\]
where~$I=(I_1,\dots I_g)\in(\Z_{\ge0})^g$.
\end{theorem*}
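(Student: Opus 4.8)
The plan is to reconstruct Katz's argument, whose two pillars are that relatively exact forms have expansion coefficients divisible by~$I_k$, and that extracting the coefficient~$a(\,\cdot\,;k,I)$ turns the Gauss--Manin connection into ordinary differentiation in~$t$. Throughout, $\nabla(L_j)$ denotes the operator~$L_j=D^2+q_jD+r_j$ with $D=\nabla(\partial/\partial t)$ acting on relative forms, while $L_j$ acting on an element of~$B$ means the same polynomial with~$D$ replaced by~$d/dt$. The hypothesis~$\nabla(L_j)(\omega_j)=0$ in~$H^1_{dR}(\mathcal{X}/B)$ says precisely that the relative $1$-form~$\nabla(L_j)(\omega_j)$ is exact, say~$\nabla(L_j)(\omega_j)=d_{\mathcal{X}/B}h$ for some~$h$ on the formal neighbourhood of~$O$.

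First I would record how exactness interacts with expansion coefficients. Writing~$h=\sum_I b_I\,T^I\in B[\![T_1,\dots,T_g]\!]$, one has
\[
d_{\mathcal{X}/B}h\=\sum_{k=1}^g\frac{\partial h}{\partial T_k}\,dT_k\=\sum_{k=1}^g\Bigl(\sum_I I_k\,b_I\,T^I\Bigr)\frac{dT_k}{T_k}\,,
\]
so that~$a(d_{\mathcal{X}/B}h;k,I)=I_k\,b_I\equiv0\pmod{I_kB}$. Consequently, for each fixed pair~$(k,I)$ the assignment~$\omega\mapsto a(\omega;k,I)\bmod I_kB$ is insensitive to relatively exact forms and therefore descends to a well-defined map~$H^1_{dR}(\mathcal{X}/B)\to B/I_kB$.

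The heart of the proof is to show that this map is \emph{horizontal}, i.e.\ that it intertwines~$\nabla(\partial/\partial t)$ on the source with~$d/dt$ on the target:
\[
a\bigl(D\omega;k,I\bigr)\;\equiv\;\frac{d}{dt}\,a(\omega;k,I)\pmod{I_kB}\,.
\]
To prove this I would lift~$\omega=\sum_k f_k\,\tfrac{dT_k}{T_k}$ to an absolute form~$\tilde\omega$ on~$\mathcal{X}$, compute~$\iota_{\partial/\partial t}\,d_{\mathcal{X}/R}\tilde\omega$, and read off the coefficients of the resulting representative of~$D\omega$. Since~$\tfrac{dT_k}{T_k}$ is closed, the absolute differential contributes in the~$dt$-direction only through~$\sum_k(\partial f_k/\partial t)\tfrac{dT_k}{T_k}$, whose coefficients are~$\tfrac{d}{dt}a(\omega;k,I)$; a different absolute lift changes~$\tilde\omega$ by a multiple of~$dt$, hence alters the representative of~$D\omega$ by a relatively exact form, which by the previous step is harmless modulo~$I_kB$. \textbf{This is the step I expect to be the main obstacle}: controlling the dependence on the trivialization of~$\widehat{\mathcal{X}}$ and verifying that the Gauss--Manin connection on the formal completion really does reduce to~$d/dt$ up to relatively exact terms is the technical core of~\cite{K84}, and is where one must argue carefully that the chosen formal frame is horizontal up to~$I_k$-divisible corrections.

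With both steps in hand the conclusion is immediate. Because~$I_k$ is a constant and~$d/dt$ preserves divisibility by~$I_k$, the horizontality identity iterates to~$a(D^2\omega;k,I)\equiv\tfrac{d^2}{dt^2}a(\omega;k,I)\pmod{I_kB}$, and combining the three terms of~$L_j=D^2+q_jD+r_j$ gives~$a(\nabla(L_j)(\omega_j);k,I)\equiv L_j\,a(\omega_j;k,I)\pmod{I_kB}$. On the other hand~$\nabla(L_j)(\omega_j)=d_{\mathcal{X}/B}h$ is exact, so its coefficient is~$\equiv0\pmod{I_kB}$ by the first step. Comparing the two expressions yields~$L_j\,a(\omega_j;k,I)\equiv0\pmod{I_kB}$, as claimed.
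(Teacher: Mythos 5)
This statement is not proved in the paper at all: it is imported verbatim as a cited result of Katz~\cite{K84}, so there is no internal proof to compare against. Your reconstruction is, however, essentially Katz's own argument: the two pillars you identify (expansion coefficients of relatively exact forms are divisible by~$I_k$, and the coefficient-extraction map intertwines~$\nabla(\partial/\partial t)$ with~$d/dt$ modulo~$I_kB$ because the formal-coordinate lift of the vector field acts as~$d/dt$ on coefficients while any change of lift only perturbs the representative by a relatively exact form) are exactly the mechanism of~\cite{K84}, and your final assembly via $B$-linearity and the fact that~$d/dt$ preserves the ideal~$I_kB$ is correct. The only point deserving slightly more care than you give it is the opening sentence: vanishing of~$\nabla(L_j)(\omega_j)$ in~$H^1_{dR}(\mathcal{X}/B)$ yields exactness of a chosen form-level representative on the formal neighbourhood because the formal completion is affine (so hypercohomological triviality there really does mean~$d_{\mathcal{X}/B}h$ for a formal function~$h$); once that is said, your argument goes through as written.
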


Let~$p\not\in S$ be a rational prime, and let~$\F$ be the finite field associated to~$p$ in Section~\ref{sec:HMV}. Consider the model~$f\colon \mathcal{X}\to Y$ as defined over~$B\otimes W(\F)$. 
As it follows from Lemma~\ref{lem:Em}, certain linear combinations of the expansion coefficients~$a(\omega_j;k,I)$ describe, after reduction modulo~$p$, the action of $m$-th iterated of the Cartier operator with respect to the basis~$\{\omega_{01},\dots,\omega_{0g}\}$ and~$\{\omega_{01}^{(p^m)},\dots,\omega_{0g}^{(p^m)}\}$ obtained by reduction modulo~$p$. We reorder as follows the basis in order to make the shape of the Cartier matrix particularly easy to work with. Recall from Lemma~\ref{lem:basisrel} that the modulo~$p$ reduction~$\omega_{0j}$ of~$\omega_j$ corresponds to a differential~$\omega_{(l,i)}$ for some index of~$(l,i)$ with~$l\in\{1,\dots,r\}$ and~$i\in\{1,\dots,f_l\}$, where~$p\mathcal{O}_F=\prod_{l=1}^r\fp_l$ and~$f_l=[\mathcal{O}_F/\fp_l:\F_p]$.
Up to rename the indices, one can assume that 

\begin{equation}
\label{eq:Nj}
\omega_{j}\overset{\mod p}{\longrightarrow}\omega_{(N_j+1,i_j)}\,,\quad\text{if}\quad j=\sum_{l=0}^{N_j}{f_l}+i_j\,,\quad 0\le N_j\le r-1\,,\quad 1\le i_j\le f_{N_j+1}\,,
\end{equation}
where we set~$f_0:=0$.  
\begin{example}
Let~$F$ be a cubic extension and let~$p\in\Z$ decompose in~$\mathcal{O}_F$ as~$p=\fp_1\cdot\fp_2$ with~$f_1=2$ and~$f_2=1$.
Then~$N_j\in\{0,1\}$ and, if~$N_j=0$, then~$1\le j_0\le f_1=2$; if~$N_j=1$, then~$1\le j_0\le f_2=1$. The possible assignments~$j\to\sum_{l=0}^{N_j}f_l+i_j$ are then
\[
1=f_0+1=0,\,,\quad 2=f_0+2\,,\quad 3=f_1+1\,,
\]
corresponding to~$\omega_1\to\omega_{(1,1)}$ and~$\omega_2\to\omega_{(1,2)}$, and~$\omega_3\to\omega_{(2,1)}$.
\end{example}

Denote by~$\omega=\{\omega_1,\dots,\omega_g\}$ the basis ordered as above, and let~$M^\eta_\omega\in R^{g\times g}$ be the change of basis matrix from $\eta$ to~$\omega$. Define
\begin{equation}
\label{eq:cart}
\hat{E}(\omega;m)\:=\bigl(M^\eta_\omega\bigr)^{(p^m)}\cdot E(\eta;m)\cdot \bigl(M^\eta_\omega\bigr)^{-1}\,.
\end{equation}
It is clear from~Lemma~\ref{lem:Em} that~$\hat{E}(\omega;m)\mod p$ represents the action of the $m$-th iterate of the Cartier operator with respect to the basis~$\omega_0=\{{\omega_0}_1,\dots,{\omega_0}_g\}$ and~$\{{\omega_0}_1^{(p^m)},\dots,{\omega_0}_g^{(p^m)}\}$, and that the collection~$\{\hat{E}(\omega;m)\}_m$ satisfies the same congruences modulo~$p$ as~$\{E(\eta;m)\}_m$ in the same lemma. Note that each matrix-entry~$\hat{E}(\omega;m)_{\nu,j}$ is a finite linear combination of expansion coefficients~$a(\omega_j;k,p^mI)$ depending only on the index~$j$. It follows from Katz's result that $\hat{E}(\omega;m)_{\nu,j}$ is a solution modulo~$p^m$ of the differential equation~$L_jv(t)=0$. It may be the case though, that~$\hat{E}(\omega;m)_{\nu,j}$ is zero modulo~$p^m$. 

\begin{proposition}
\label{prop:cong}
Let~$m\in\Z_{\ge1}$ be fixed. 
For every~$j\in\{1,\dots,g\}$ let~$0\le N_j\le r-1$ and~$1\le i_j\le f_{N_j+1}$ be as in~\eqref{eq:Nj}. For each~$i_j$, let~$\nu_{i_j}$ be the unique integer such that~$1\le\nu_{i_j}\le f_{N_j+1}$ and~$\nu_0\equiv i_j+m\mod f_{N_j+1}$, and set~$\nu_j:=\sum_{l=0}^{N_j}f_l+\nu_{i_j}$. 
Then
\[
\hat{E}(\omega;m)_{\nu_j,j}\neq 0\mod p\cdot B\otimes W(\F)\,,\quad \text{and}\quad L_j\hat{E}(\omega;m)_{\nu_j,j}\equiv0\mod p^m\cdot B\otimes W(\F)\,.
\]
\end{proposition}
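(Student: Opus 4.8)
The plan is to treat the two assertions of Proposition~\ref{prop:cong} separately, since only one of them carries real content. The congruence $L_j\,\hat E(\omega;m)_{\nu_j,j}\equiv 0\bmod p^m$ is immediate: as observed just before the statement, each entry $\hat E(\omega;m)_{\nu,j}$ is a fixed linear combination of the expansion coefficients $a(\omega_j;k,p^mI)$ depending only on $j$, so Katz's theorem gives $L_j\,\hat E(\omega;m)_{\nu,j}\equiv 0\bmod p^m\cdot B\otimes W(\F)$ for \emph{every} $\nu$, in particular for $\nu=\nu_j$. Thus everything reduces to proving the \emph{nonvanishing} modulo $p$ of the single entry indexed by $\nu_j$.

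For the nonvanishing I would first make $\hat E(\omega;m)\bmod p$ completely explicit. By Lemma~\ref{lem:Em} together with the definition~\eqref{eq:cart}, this matrix represents the $m$-th iterate $\mathcal C^m$ of the Cartier operator with respect to the bases $\omega_0=\{\omega_{0j}\}_j$ and $\{\omega_{0\nu}^{(p^m)}\}_\nu$. I then feed in the two structural facts from Sections~\ref{sec:HMV}--\ref{sec:CHMV}: by Lemma~\ref{lem:basisrel} one has $\omega_{0j}=\beta_j\,\omega_{(N_j+1,i_j)}$ with $\beta_j\neq0$ and pairwise distinct indices, and by the normalization~\eqref{eq:normf} the Cartier operator acts on the real-multiplication eigenbasis by the index shift $\mathcal C(\omega_{(l,i)})=\alpha_{l,i}\,\omega_{(l,i+1)}$, cyclically within the block of size $f_l$. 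Iterating $m$ times, $\mathcal C^m(\omega_{(N_j+1,i_j)})$ is a scalar multiple of $\omega_{(N_j+1,\,i_j+m)}$, and the reindexing~\eqref{eq:Nj} identifies this precisely with the basis vector $\omega_{0\nu_j}$ for the $\nu_j$ of the statement (the shift $i_j\mapsto i_j+m\bmod f_{N_j+1}$). Consequently each column $j$ of $\hat E(\omega;m)\bmod p$ has a \emph{single} potentially nonzero entry, located at row $\nu_j$, and that entry equals, up to the nonzero scalars $\beta$ and Frobenius twists, the product of Cartier coefficients $\prod_{k=0}^{m-1}\alpha_{N_j+1,\,i_j+k}$ taken along the cyclic orbit.

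It then suffices to show this product is nonzero in $B\otimes W(\F)/p$. Using the cocycle relation $\hat E(\omega;m+1)\equiv \hat E(\omega;m)^{(p)}\hat E(\omega;1)\bmod p$ (inherited by $\hat E$ from Lemma~\ref{lem:Em}), an induction on $m$ reduces the claim to the base case $m=1$: that no $\alpha_{l,i}$ vanishes identically, equivalently that $\mathcal C(\omega_{0j})\neq0$ in $B_0=B\otimes\F$ for every $j$. Since $p\not\in S$ is unramified in $K$, the ring $B_0$ is a finite product of integral domains, one factor per prime of $\mathcal O_K$ above $p$, so a product of nonzero $\alpha$'s stays nonzero componentwise; this is also the source of the factor $n_p$ appearing in the later applications. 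Hence the whole proposition follows once the base case is settled.

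The base case is the genuine obstacle, and it is exactly where the choice of $S$ is used. Nonvanishing of $\mathcal C$ on each eigenform $\omega_{0j}$ says that the family $\mathcal X_p\to Y_0$ is generically ordinary in the $j$-th real-multiplication direction, i.e.\ that the corresponding partial Hasse invariant $\alpha_{N_j+1,i_j}$ is not identically zero. I would deduce this from the hypothesis on $S$ recorded in Section~\ref{sec:CHMV}: for $p\not\in S$ the mod-$p$ solution space of $L_jv=0$ has dimension one over $\F(t^p)$. Via the relation between the $p$-curvature of the Gauss--Manin connection and the Cartier operator, a one-dimensional (rather than two-dimensional) solution space forces the $p$-curvature of $L_j$ to be nonzero, which in turn forces $\mathcal C(\omega_{0j})\neq0$ generically; that only finitely many primes are excluded is precisely Katz's result~\cite{K70}. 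Translating this $p$-curvature/solution-space dichotomy into the nonvanishing of the single Cartier coefficient is the step requiring the most care, and it is the crux on which the proposition rests.
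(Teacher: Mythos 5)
Your proposal is correct and follows essentially the same route as the paper's proof: the congruence $L_j\hat{E}(\omega;m)_{\nu_j,j}\equiv0\bmod p^m$ comes directly from Katz's theorem, and the nonvanishing is obtained from the block-cyclic shape of $\hat{E}(\omega;1)\bmod p$ (via Lemma~\ref{lem:basisrel} and the normalization~\eqref{eq:normf}), propagated to all $m\ge1$ by the relation $\hat{E}(\omega;m+1)\equiv\hat{E}(\omega;m)^{(p)}\hat{E}(\omega;1)\bmod p$, locating the unique nonzero entry of each column at the row given by the cyclic shift $i_j\mapsto i_j+m$. The only point where you are more explicit than the paper is the base case $m=1$: the paper simply asserts that the cyclic entries of each block $M_l$ are nonzero, whereas you justify this via the definition of $S$ (excluding primes where the solution space of $L_j\bmod p$ has dimension greater than one) together with Katz's $p$-curvature result, which is precisely the role that assumption was given in Section~\ref{sec:CHMV}.
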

\begin{proof}
The second statement follows immediately from Katz's theorem in~\cite{K84} and the discussion before Proposition~\ref{prop:cong}. The claim that~$\hat{E}(\omega;m)_{\nu_j,j}$ is non trivial modulo~$p$ (and in particular, modulo~$p^m$) has to be proven.

The ordering of the basis~$\omega$ in~\eqref{eq:Nj} implies that~$\hat{E}(\omega;m)\mod p$ represents the action of the $m$-th iterated of the Cartier operator with respect to the ordered basis
\[
\{\omega_{(1,1)},\dots,\omega_{(1,f_1)},\omega_{(2,1)},\dots,\omega_{(r,f_r)}\}\,,\quad\text{and}\quad\{\omega^{(p^m)}_{(1,1)},\dots,\omega^{(p^m)}_{(1,f_1)},\omega^{(p^m)}_{(2,1)},\dots,\omega^{(p^m)}_{(r,f_r)}\}\,.
\]
It follows from the definition (see Section~\ref{sec:HMV}) of~$\{\omega_{(l,i)}\}_{(l,i)}$ and the normalization~\eqref{eq:normf} that 
\begin{equation}
\label{eq:shape}
\hat{E}(\omega;1)\;\equiv\; \begin{pmatrix}
M_1 & & &  \\
& M_2 & &  \\
& & \ddots & \\
& & & M_r 
\end{pmatrix}\mod p
\end{equation}
with non-zero blocks~$M_l\in\mathrm{Mat}_{f_l\times f_l}(B\otimes\F)$ for~$l=1,\dots,r$, and zero everywhere else. 
Each block~$M_l$ describes the action of the Cartier operator on the Cartier-stable subspace of differentials~$\{\omega_{(l,i)}\,:\,i=1,\dots,f_l\}$ and is of the form
\[
M_l\=\begin{pmatrix}
0 & \cdots & 0 & \ast\\
\ast & 0 &\cdots & 0 \\
0 & \ddots & \ddots &\vdots \\
0 & 0 & \ast & 0
\end{pmatrix}\,,
\]
where~$\ast$ stands for a non-zero entry.  

From the congruence~$\hat{E}(\omega;m+1)\equiv \hat{E}(\omega;m)^{(p)}\cdot\hat{E}(\omega;1)\mod p$ it follows that $\hat{E}(\omega;m)\mod p$ has the same block structure ~\eqref{eq:shape} for every~$m\ge1$. The non-zero entries of~$\hat{E}(\omega;m) \mod p$ are then the non-zero entries of its sub-blocks~$M_l(m)=M_l(m-1)^{(p)}M_l$ for~$l=1,\dots,r$. A simple computation shows that~$M_l(m)_{\nu_0,j_0}\neq 0$ if and only if the indices are of the form~$(\nu_0, f_l-m+\nu_0)$, or equivalently if~$(\nu_0,j_0)=(j_0+m-f_{l},j_0)$, with the usual convention~$f_l+1=1$. The proposition then follows from the shape~\eqref{eq:shape} of~$\hat{E}(\omega,m)\mod p$ and the size of the blocks~$M_l$.
\end{proof}

Now fix~$j\in\{1,\dots,g\}$ and let~$m\ge 1$ be an integer. Denote by~$y_j^{[m]}(t)$ a solution of~$L_j$ modulo~$p^m$. Such solutions arise from the entries of the matrix~$\widehat{E}(\omega;m)$ as shown in Proposition~\ref{prop:cong}. 

In the case~$y_j^{[m]}(t)$  is a polynomial (this happens for instance if~$Y$ is torsion free), we study its degree in terms of~$m$. 
\begin{lemma}
\label{lem:deg}
Let~$y_j^{[m]}(t)\in R[t]$ be a polynomial such that~$L_j y_j^{[m]}(t)\equiv0\mod p^m$ holds, and denote by~$\overline{y_j^{[m]}}$ its modulo~$p^m$ reduction. There exists an integer~$\kappa_j\ge0$ such that
\[
\deg{\overline{y_j^{[m]}}(t)}\;\ge\;p^{\lceil\frac{m}{2}\rceil}\-\kappa_j\,, 
\]
where~$\lceil{x}\rceil$ denotes the ceiling function. 
In particular, $\deg{\overline{y^{[m]}_j}(t)}\to\infty$ as~$m\to\infty$.
\end{lemma}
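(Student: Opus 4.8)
The plan is to read the bound directly off the recursion of Lemma~\ref{lem:rec}, whose only relevant feature here is the shape of the leading coefficient $d_{j,0}(n)=(n+1)^2\prod_{i=2}^{m_j}(-t_{j,i})$. Since $p\notin S$, the finite singular points $t_{j,2},\dots,t_{j,m_j}$ reduce to distinct nonzero elements modulo $p$, so $C:=\prod_{i=2}^{m_j}(-t_{j,i})$ is a $p$-adic unit and the entire $p$-divisibility of $d_{j,0}(n)$ sits in the double factor $(n+1)^2$. Writing $y_j^{[m]}(t)=\sum_{n=0}^N c_n t^n$, the coefficients obey the recursion of Lemma~\ref{lem:rec} modulo $p^m$, and by Proposition~\ref{prop:cong} we may assume $\overline{y_j^{[m]}}\not\equiv 0\bmod p$.

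First I would locate the first index at which the recursion loses all precision. Because $v_p\bigl(d_{j,0}(n)\bigr)=2\,v_p(n+1)$, the smallest $n$ with $d_{j,0}(n)\equiv 0\bmod p^m$ is $n+1=p^{\lceil m/2\rceil}$; for every $n+1<p^{\lceil m/2\rceil}$ one has $v_p\bigl(d_{j,0}(n)\bigr)<m$, so the recursion still constrains $c_{n+1}$ nontrivially modulo $p^m$. The crux is to show that a solution which is nonzero modulo $p$ cannot terminate while its leading coefficient is still nonzero modulo $p^m$. Indeed, if $c_{N+1}=0$ with $p\nmid N+1$, then $d_{j,0}(N)$ is a unit and the recursion at $n=N$ forces $\sum_{i\ge 1}d_{j,i}(N)c_{N+1-i}\equiv 0\bmod p^m$, i.e. the natural continuation already vanishes to full precision. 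Since the free parameters produced at the intermediate multiples of $p$ only affect the top $p$-adic digits of a single coefficient, they cannot cancel the fixed low-order digits of this expression, so such an early termination propagates divisibility by $p$ downward through the top coefficients and eventually forces $\overline{y_j^{[m]}}\equiv 0$, a contradiction. Hence $N+1$ must reach the first genuinely free index, giving $N\ge p^{\lceil m/2\rceil}-\kappa_j$, where the bounded correction $\kappa_j$ absorbs the finitely many exceptional indices coming from the zeros modulo $p$ of the remaining recursion coefficients $d_{j,1},\dots,d_{j,m_j-1}$ (in particular of $d_{j,m_j-1}(n)=(n-m_j+2+s^\infty_{j,1})(n-m_j+2+s^\infty_{j,2})$) together with the initial segment below the first multiple of $p$.

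The main obstacle is precisely the valuation bookkeeping in that key step: making rigorous that the low-order $p$-adic digits of $\sum_{i\ge 1}d_{j,i}(N)c_{N+1-i}$ are rigid, so that no choice of the free high-order digits can engineer a spurious low-degree termination with $\overline{y_j^{[m]}}\ne 0$. Here I would use that for $p\notin S$ the solution space of $L_j\bmod p$ is one-dimensional over $\F(t^p)$; this rigidity identifies $\overline{y_j^{[m]}}$ up to an $\F(t^p)$-factor with the basic (Hasse-type) solution and pins down which coefficients are units, ruling out the cancellations. As an independent check one can reduce modulo $p$ and invoke the congruence $\hat E(\omega;m)\equiv\hat E(\omega;m-1)^{(p)}\hat E(\omega;1)\bmod p$ of Lemma~\ref{lem:Em}, which exhibits $\overline{y_j^{[m]}}$ as a Frobenius-twisted product of partial Hasse invariants: each Frobenius twist multiplies the $t$-degree by $p$, so the degree grows at least geometrically in $m$, confirming both that $\deg\overline{y_j^{[m]}}\to\infty$ and that it is consistent with the stated bound.
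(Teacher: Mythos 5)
Your proposal has a genuine gap at exactly the step you flag as the crux, and the gap comes from attributing the bound to the wrong end of the recursion. You read the exponent $\lceil m/2\rceil$ off the leading coefficient $d_{j,0}(n)=(n+1)^2\prod_{i\ge2}(-t_{j,i})$, and you treat the zeros of $d_{j,m_j-1}(n)=(n-m_j+2+s^\infty_{j,1})(n-m_j+2+s^\infty_{j,2})$ as "finitely many exceptional indices" to be absorbed into $\kappa_j$. Both attributions are off. The zeros modulo $p$ of $d_{j,m_j-1}(n)$ are arithmetic progressions in $n$ (they recur with period $p$), so they are not finitely many and cannot be swallowed by a constant; and the paper's proof never uses $d_{j,0}$ at all. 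The paper's argument is a one-step collapse at the termination point: if $\beta$ is an index with $y_{j,\beta}^{[m]}\neq0$ followed by $m_j$ vanishing coefficients (such $\beta$ exists because the solution is a polynomial, and $\beta\le\deg$), then the recursion of Lemma~\ref{lem:rec} evaluated at $n=\beta+m_j-1$ has every term equal to zero except the last, giving $d_{j,m_j-1}(\beta+m_j-1)\cdot y_{j,\beta}^{[m]}\equiv0\bmod p^m$, i.e. $(\beta+1+s^\infty_{j,1})(\beta+1+s^\infty_{j,2})\equiv0\bmod p^m$. Since this is a product of two linear factors, one factor must have $p$-adic valuation at least $\lceil m/2\rceil$, whence $\beta\ge p^{\lceil m/2\rceil}-\kappa_j$ with $\kappa_j=1+\max\{s^\infty_{j,1},s^\infty_{j,2}\}$. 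So both the exponent $\lceil m/2\rceil$ and the constant $\kappa_j$ come from the \emph{trailing} coefficient $d_{j,m_j-1}$ and its roots (the exponents at infinity), not from $(n+1)^2$.

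Concretely, the unproven step in your write-up is the claim that an early termination "propagates divisibility by $p$ downward through the top coefficients and eventually forces $\overline{y_j^{[m]}}\equiv0$": the assertion that the free high-order digits created at multiples of $p$ cannot cancel low-order digits is not established (and is delicate, since a perturbation of size $p^{m-2k}$ in one coefficient is later divided by further non-unit values of $d_{j,0}$, so it can migrate to lower digits), and the fallback via one-dimensionality of the mod-$p$ solution space is only sketched, not carried out. You also only treat the case $p\nmid N+1$, leaving open terminations at indices with $0<v_p(N+1)<\lceil m/2\rceil$. Note that your own computation already contains the fix: instead of stopping at the congruence $\sum_{i\ge1}d_{j,i}(N)c_{N+1-i}\equiv0\bmod p^m$ at $n=N$, run the recursion forward to $n=N+m_j-1$, where all but one summand vanish and the congruence collapses to $d_{j,m_j-1}(N+m_j-1)\,c_N\equiv0\bmod p^m$; the factorization of $d_{j,m_j-1}$ then yields the lemma immediately (modulo the same care about $c_N$ being a unit that the paper itself glosses over). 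Finally, your "independent check" via $\hat E(\omega;m)\equiv\hat E(\omega;m-1)^{(p)}\hat E(\omega;1)\bmod p$ concerns the mod-$p$ reduction of the particular solutions of Proposition~\ref{prop:cong}, not an arbitrary polynomial solution mod $p^m$ as in the lemma, so it is a plausibility check rather than a proof.
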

\begin{proof}
Write~${y_j^{[m]}(t)}=\sum_{n\ge0}{y_{j,n}^{[m]}t^n}$. Let~$\beta_{j}^{[m]}$ be the smallest integer such that~$y_{j,\beta_j^{[m]}}^{[m]}\neq0$ and~$y_{j,i}^{[m]}=0$ for~$i=\beta_j^{[m]}+1,\dots,\beta_j^{[m]}+m_j$, where~$m_j$ is the number of regular singular points of~$L_j$ (such~$\beta_j^{[m]}$ exists since~$Y_j^{[m]}$ is a polynomial solution). Since~$y_j^{[m]}$ is a solution of~$L_jv\equiv0\mod p^m$, its coefficients satisfy the recursion in Lemma~\ref{lem:rec} modulo~$p^m$. It holds in particular
\[
d_{j,0}(\beta_j^{[m]}+m_j-1)\cdot 0\;\equiv\;d_{j,1}(\beta_j^{[m]}+m_j-1)\cdot 0\+\cdots\+d_{j,m_j-1}(\beta_j^{[m]}+m_j-1)\cdot y_{j,\beta_j^{[m]}}^{[m]}\mod p^m\,.
\]
Since~$y_{j,\beta_j^{[m]}}^{[m]}\neq0$, the above congruence and the description of~$d_{j,m_j-1}$ in Lemma~\ref{lem:rec} imply
\[
0\equiv d_{j,m_j-1}(\beta_j^{[m]}+m_j-1)=(\beta_j^{[m]}+1+s^\infty_{j,1})(\beta_j^{[m]}+1+s^\infty_{j,2})\mod p^m\,,
\]
which proves the lemma with constant~$\kappa_j:=1+\max\{s^\infty_{j,1},s^\infty_{j,2}\}$. 
\end{proof}

\begin{theorem}
\label{thm:main}
Let~$Y\hookrightarrow\mathcal{A}_g$ be an affine Kobayashi geodesic defined with an integral model over~$\mathcal{O}_K[S^{-1}]$. For~$j\in\{1,\dots,g\}$ let~$L_j$ be the Picard-Fuchs operators coming from~$Y\hookrightarrow\mathcal{A}_g$, normalized as in Lemma~\ref{lem:Rs}, and assume that~$t_1=0$ is a regular singular point of~$L_j$ with local exponents~$(0,0)$.  The holomorphic solution~$y_j(t)=1+\sum_{n\ge1}{y_{j,n}t^n}$ of~$L_jv=0$ at~$t=0$ has coefficients in~$R=\mathcal{O}_K[S^{-1}]$.
\end{theorem}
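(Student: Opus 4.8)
The plan is to prove integrality by interpolating the single holomorphic solution $y_j(t)$ against the family of polynomial-like solutions modulo $p^m$ produced by the Cartier-operator machinery, and to run this argument one prime at a time. Fix a prime $p \notin S$; I want to show each coefficient $y_{j,n}$ lies in $\mathcal{O}_K[S^{-1}] \otimes \Z_{(p)}$, since intersecting these conditions over all $p \notin S$ recovers membership in $R = \mathcal{O}_K[S^{-1}]$. The key input is Proposition~\ref{prop:cong}, which for every $m \ge 1$ supplies a nonzero entry $\widehat{E}(\omega;m)_{\nu_j,j}$ that is a solution modulo $p^m$ of $L_j v = 0$, with coefficients that are (by construction as expansion coefficients) already integral at $p$. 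The hypothesis that $t_1 = 0$ is a regular singular point with exponents $(0,0)$ means the normalized holomorphic solution $y_j(t) = 1 + \sum_{n \ge 1} y_{j,n} t^n$ is the \emph{unique} solution with this local shape, and its coefficients obey the recursion of Lemma~\ref{lem:rec}.

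First I would compare the modulo-$p^m$ solution $y_j^{[m]}(t) := \widehat{E}(\omega;m)_{\nu_j,j}$ with the true holomorphic solution $y_j(t)$. Since $t = 0$ has exponents $(0,0)$, the leading coefficient $d_{j,0}(n) = (n+1)^2 \prod_{i=2}^{m_j}(-t_{j,i})$ from Lemma~\ref{lem:rec} is a unit modulo $p$ for all $n \ge 0$ once $p$ is large enough (avoiding the finitely many $p$ dividing the $t_{j,i}$ or appearing in denominators), so the recursion can be solved uniquely modulo $p^m$ with a prescribed constant term. Normalizing $y_j^{[m]}$ so its constant term matches $y_j$, the uniqueness of the recursive solution forces $y_j^{[m]}(t) \equiv y_j(t) \bmod p^m$ coefficientwise, for every coefficient index $n$ where both are defined. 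Because $y_j^{[m]}$ arises from honest expansion coefficients $a(\omega_j; k, p^m I)$, which lie in $B \otimes W(\F)$, each coefficient $y_{j,n}$ is congruent modulo $p^m$ to an element integral at $p$.

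The mechanism that makes this yield genuine integrality, rather than a vacuous congruence, is the degree growth from Lemma~\ref{lem:deg}: $\deg \overline{y_j^{[m]}} \ge p^{\lceil m/2 \rceil} - \kappa_j \to \infty$. This guarantees that for any fixed $n$, and for $m$ large enough that $p^{\lceil m/2\rceil} - \kappa_j > n$, the $n$-th coefficient $y_{j,n}^{[m]}$ of the modulo-$p^m$ solution is genuinely present and equals the $p$-integral quantity coming from $\widehat{E}(\omega;m)$. Combined with the congruence $y_{j,n} \equiv y_{j,n}^{[m]} \bmod p^m$ and letting $m \to \infty$, one concludes $y_{j,n}$ has nonnegative $p$-adic valuation, i.e. $y_{j,n} \in \mathcal{O}_K[S^{-1}] \otimes \Z_{(p)}$. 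Running over all $p \notin S$ gives $y_{j,n} \in R$.

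The main obstacle I expect is bookkeeping around two normalization issues. First, the entry $\widehat{E}(\omega;m)_{\nu_j,j}$ is a priori only \emph{a} solution modulo $p^m$; I must verify it has nonzero constant term (or can be rescaled to have constant term $1$ by a $p$-adic unit) so that matching it to $y_j$ is legitimate — this is where the precise nonvanishing in Proposition~\ref{prop:cong} and the unit property of $d_{j,0}$ are both essential, and one should check the rescaling factor is itself integral at $p$. Second, $\widehat{E}(\omega;m)_{\nu_j,j}$ is an element of $B \otimes W(\F)$, so its coefficients live over the ramification-free extension $W(\F)$ of $\Z_p$ rather than over $R$ itself; I would need a Galois-descent or eigenform-rationality argument to descend from $W(\F)$-integrality back to $\mathcal{O}_K[S^{-1}]\otimes\Z_{(p)}$-integrality, using that $y_j(t)$ has coefficients in $K$ a priori (from the number-field model) and that $W(\F) \cap K = \mathcal{O}_K \otimes \Z_{(p)}$ inside the completion. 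Handling these descent and normalization points cleanly, uniformly in $m$, is the technical heart of the argument.
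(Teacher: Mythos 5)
Your overall skeleton matches the paper's proof (fix $p\notin S$, take the mod-$p^m$ solutions supplied by Proposition~\ref{prop:cong}, control them via the recursion of Lemma~\ref{lem:rec} and the degree growth of Lemma~\ref{lem:deg}, then descend from $W(\F)$ using rationality of $y_j$), but your central matching step rests on a false claim. You assert that $d_{j,0}(n)=(n+1)^2\prod_{i\ge 2}(-t_{j,i})$ is a unit modulo $p$ for all $n\ge 0$ ``once $p$ is large enough''. No fixed prime has this property: $n$ runs over all nonnegative integers, so for every $p$ and every $n\equiv -1 \pmod p$ the factor $(n+1)^2$ is divisible by $p^2$. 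Consequently the recursion modulo $p^m$ does \emph{not} determine the coefficients uniquely past the index $n=p-2$, and your conclusion that $y_j^{[m]}(t)\equiv y_j(t)\bmod p^m$ coefficientwise ``for every coefficient index $n$'' does not follow; taken literally it is even false, since for $n$ beyond $\deg y_j^{[m]}$ it would force $y_{j,n}\equiv 0\bmod p^m$ for all but finitely many $n$, contradicting, e.g., the congruences $y_j(t)^N\equiv\alpha_{p,j}(t)\,y_{j'}(t^p)^N\bmod p$ of Corollary~\ref{cor:cong}. Note also that if your unit claim were true, the whole theorem would be immediate by induction on the recursion (each coefficient is obtained from earlier ones by ring operations and division by $p$-adic units), and none of the Katz/Cartier machinery would be needed; the degenerate indices $n\equiv -1\pmod p$ are precisely why the statement is nontrivial.

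What is missing is the paper's inverse-limit/coherence argument, which circumvents exactly this degeneration. Since both $y_j^{[m+1]}$ and $y_j^{[m]}$ solve the recursion mod $p^m$ with constant term $1$, their coefficients agree only up to a threshold index growing with $m$: the paper's bookkeeping gives $y_j^{[m+1]}-y_j^{[m]}\equiv O(t^{\lceil m/2\rceil})\bmod p^m$, and uniqueness is never invoked for all $n$ at a fixed level $m$. Combined with Lemma~\ref{lem:deg} (whose role is to guarantee the polynomials do not stabilize at bounded degree, so the limit is an honest power series), this coherence shows that the family $\{y_j^{[m]}\}_m$ converges $p$-adically to a power series solution of $L_jv=0$ with constant term $1$ over $\bigl(\varprojlim R/p^mR\bigr)\otimes W(\F)$. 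Only at that point does uniqueness enter, and in characteristic zero: the normalized log-free solution at a point with exponents $(0,0)$ is unique and has coefficients in $K$ by the Frobenius method, so it coincides with the $p$-adic limit, whence its coefficients are integral at $p$; intersecting over all $p\notin S$ gives the theorem. Your two flagged ``bookkeeping'' issues are real but minor by comparison: the normalization $y_j^{[m]}(0)=1$ is legitimate because the relevant entry of $\widehat{E}(\omega;m)$ has unit constant term (the Hasse invariant does not vanish at the cusps), and the descent from $W(\F)$ to $R$ is handled exactly as you sketch.
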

\begin{proof}
Let~$p\not\in S$ be a prime, and consider the collection~$\{y_j^{[m]}(t)\}_{m\ge1}$ of solutions modulo~$p^m$ normalized such that~$y_j^{[m]}(0)=1$. Since~$y_j^{[m+1]}$ is a solution of~$L_jv\equiv0\mod p^{m+1}$, it is also a solution of the same differential equation modulo~$p^m$. In particular, both~$y_j^{[m+1]}$ and~$y_j^{[m]}$ satisfy the recursion in Lemma~\ref{lem:rec} modulo~$p^m$ and their coefficients, up to the power~$t^{\lceil{m/2}\rceil-1}$, are uniquely determined by the recursion and the initial value~$y_{j,0}=1$. It follows that
\[
y_j^{[m+1]}(t)-y_j^{[m]}\equiv O(t^{\lceil{m/2}\rceil})\mod p^m\,,\quad m\ge1\,.
\]
From this and from Lemma~\ref{lem:deg} it follows that the collection~$\{y_j^{[m]}\}_{m\ge1}$ lifts to a solution~$y_j=1+\sum_{n\ge1}y_{j,n}t^n$ of~$L_j=0$ defined over~$\Bigl(\varprojlim{R/p^mR}\Bigr)[\![t]\!]\otimes W(\F)$. Since this holds for every~$p\not\in S$, and since a solution of~$L_jv=0$ has coefficients in~$K$ (as implied by the Frobenius method), it follows that the coefficients of~$y_{j}$ are elements of~$R=\mathcal{O}_K[S^{-1}]$.
\end{proof}

\begin{corollary}
\label{cor:cong}
Let~$Y\hookrightarrow\mathcal{A}_g$ be as in Theorem~\ref{thm:main}, and  let~$N$ be the lowest common multiple of the orders of the elliptic points of~$Y$ (set $N=1$ if~$Y$ has no elliptic points). For~$j\in\{1,\dots,g\}$ denote by~$y_j(t)$ the normalized integral solutions of~$L_jv=0$. Then for every prime~$p\not \in S$ there exists a polynomial~$\alpha_{p,j}(t)\in(\mathcal{O}_K[S^{-1}]\otimes \F_p)[t]$ and an index~$j'\in\{1,\dots,g\}$ such that
\[
y_j(t)^N\;\equiv \alpha_{p,j}(t)\cdot y_{j'}(t^p)^N\mod p\,.
\]
\end{corollary}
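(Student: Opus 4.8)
The idea is to read the congruence off the Frobenius structure of the Cartier matrices $\hat E(\omega;m)$, using that for $p\notin S$ the reduced Picard--Fuchs equation has an essentially unique solution. Fix a prime $p\notin S$ and work over $B\otimes W(\F)$ as in Section~\ref{sec:expc}. I first collect the inputs. By Proposition~\ref{prop:cong}, for every $m\ge1$ the entry $\hat E(\omega;m)_{\nu_j,j}(t)$ is a solution of $L_j v\equiv0\mod p^m$ that is nonzero modulo $p$; by the block form~\eqref{eq:shape} of $\hat E(\omega;1)$, the Cartier operator acts on the real-multiplication eigenforms $\{\omega_{(l,i)}\}$ by the cyclic shift $i\mapsto i+1$ of~\eqref{eq:normf}, and this shift singles out, for each $j$, a \emph{Cartier-shifted index} $j'$; this is the index in the statement. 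By Theorem~\ref{thm:main} the integral solution $y_j(t)$ exists, and $\bar y_j:=y_j\bmod p$ solves $L_j v\equiv0\mod p$. Finally, since $p\notin S$, the space of solutions of $L_j v\equiv0\mod p$ is at most one-dimensional over $\F(t^p)$ (Section~\ref{sec:CHMV}, following~\cite{K70}), hence exactly one-dimensional as nonzero solutions exist.

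The heart of the argument is to unfold the congruence of Lemma~\ref{lem:Em} into the product
\[
\hat E(\omega;m)\;\equiv\;\hat E(\omega;1)^{(p^{m-1})}\cdots\hat E(\omega;1)^{(p)}\,\hat E(\omega;1)\mod p .
\]
The operation $A\mapsto A^{(p)}$ raises each entry $f(t)\in B\otimes\F$ to its $p$-th power, and modulo $p$ this is exactly the substitution $t\mapsto t^p$ composed with the coefficient Frobenius, $f(t)^p\equiv f^{\sigma}(t^p)\mod p$. Because each row of a block in~\eqref{eq:shape} carries a single nonzero entry, the $(\nu_j,j)$-entry of the product collapses to a single product taken along the Cartier orbit $j=j_0,\,j'=j_1,\,j_2,\dots$ of $j$, the $s$-th factor being a partial Hasse invariant $h_{j_s}(t):=\hat E(\omega;1)_{\ast,j_s}(t)$ twisted by $t\mapsto t^{p^s}$. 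On the other hand, the recursion of Lemma~\ref{lem:rec} determines the coefficients of a solution up to order $\lceil m/2\rceil-1$ from the initial datum, so (as in the proof of Theorem~\ref{thm:main}) the suitably normalized entries converge modulo $p$ to $\bar y_j$. Thus $\bar y_j$ is the infinite product of the twisted factors $h_{j_s}^{\sigma^s}(t^{p^s})$; splitting off the $s=0$ factor $h_j:=h_{j_0}$ and recognizing the shifted tail as $\bar y_{j'}(t^p)$ (up to a coefficient Frobenius absorbed into the statement) gives
\[
\bar y_j(t)\;\equiv\;h_j(t)\cdot\bar y_{j'}(t^p)\mod p .
\]

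It remains to pass to the $N$-th power and to check that the factor becomes a genuine polynomial, which is where $N$ enters. By the Riemann scheme of Lemma~\ref{lem:Rs}, both $y_j$ and $y_{j'}(t^p)$ are nonvanishing power series at $t=0$ (local exponents $(0,0)$ there), so $h_j$ is a unit at the origin; at the remaining singular points the only vanishing or poles of $h_j$ are controlled by the fractional exponents $\tfrac{1+\ord_{\tau_i}\varphi_j'}{n_i}$, and by the exponents $s^\infty_{j,1},s^\infty_{j,2}$ at the pole of $t$. Raising the displayed congruence to the power $N=\mathrm{lcm}(n_i)$ clears the denominators $n_i$, so that $\alpha_{p,j}:=h_j^{\,N}=\bar y_j^{\,N}/\bar y_{j'}(t^p)^N$ has no branching and, by the degree bound coming from the exponents at infinity, only finitely many poles which must in fact cancel; hence $\alpha_{p,j}\in(\mathcal{O}_K[S^{-1}]\otimes\F_p)[t]$ and $y_j^N\equiv\alpha_{p,j}(t)\,y_{j'}(t^p)^N\mod p$, as claimed.

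The main obstacle is the middle step: rigorously identifying the limit of the normalized Cartier entries with the integral solution $\bar y_j$ (matching normalizations and truncation orders through Lemma~\ref{lem:rec}) and recognizing the Frobenius-twisted tail of the product as $\bar y_{j'}(t^p)$, and then confirming via the local exponents that the proportionality factor is polynomial in $t$ precisely after raising to the $N$-th power, rather than merely a rational function.
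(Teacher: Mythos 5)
Your plan is correct and follows essentially the same route as the paper's proof: the one-step Frobenius congruence $\hat{E}(\omega;m+1)\equiv\hat{E}(\omega;m)^{(p)}\hat{E}(\omega;1)\bmod p$ of Lemma~\ref{lem:Em}, applied to the non-vanishing entries singled out in Proposition~\ref{prop:cong} and identified via Theorem~\ref{thm:main} with the integral solutions, yields exactly your relation $\bar{y}_j(t)\equiv h_j(t)\,\bar{y}_{j'}(t^p)\bmod p$ with $h_j=\hat{E}(\omega;1)_{v_j,j}$, after which the $N$-th power clears the fractional exponents at elliptic points. The only divergence is in justifying that $h_j^N$ is a polynomial: you argue purely locally from the Riemann scheme of Lemma~\ref{lem:Rs}, whereas the paper argues geometrically --- the zeros of $h_j$ away from the singular points cut out the finite non-ordinary locus, which is reduced by~\cite{AG}, cusps are always ordinary, and only elliptic points contribute fractional factors --- which moreover produces the explicit formula~\eqref{eq:ap} for $\alpha_{p,j}$ that the paper needs later (Theorem~\ref{thm:deg}, Corollary~\ref{cor:calc}).
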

\begin{proof}

These congruences follow essentially from the congruence of the matrices~$\{E(\omega,m)\}_m$ described in Lemma~\ref{lem:Em}. 
The integral holomorphic solution~$y_j(t)$ of the Picard-Fuchs operator~$L_j$ is constructed in Theorem~\ref{thm:main} as the inverse limit of approximated solutions~$y_j^{[m]}(t)$. These come in turn from non-zero (modulo~$p$) entries~$\widehat{E}(\omega,m)_{\nu_j,j}(t)$ of the matrix~$\widehat{E}(\omega,m)$ defined in~\eqref{eq:cart}, which satisfy the congruence relation in Lemma~\ref{lem:Em}. 
As explained in the proof of Proposition~\ref{prop:cong}, since~$\{\omega_j\}_j$ is an eigenbasis for the action of real multiplication, the matrix~$\widehat{E}(\omega;m)$ modulo~$p$ has precisely one non-zero entry~$\widehat{E}(\omega,m)_{v_j,j}(t)$ for each row and column. The congruence for the matrices then become congruences between approximated solutions:
\begin{equation}
\label{eq:congas}
y_j^{[m+1]}(t)\equiv\widehat{E}(\omega,m+1)_{v_j,j}(t)\equiv \widehat{E}(\omega,m)_{v_{j'},j'}(t)^p\cdot\widehat{E}(\omega,1)_{v_j,j}(t)\equiv y_{j'}^{[m]}(t^p)\cdot\widehat{E}(\omega,1)_{v_j,j}(t) \mod p\,.
\end{equation}
Here~$j'$ is determined uniquely from~$j$ and the splitting of~$p$ in the field of real multiplication~$F$. Explicitly, it can be computed from the definitions in the statement of Proposition~\ref{prop:cong}.
To prove the corollary, it remains to understand the matrix entry~$\widehat{E}(\omega,1)_{v_j,j}(t)\mod p$, i.e., to understand the action of the Cartier operator. 

Because of the shape of the matrix~$\widehat{E}(\omega,1)$ (see~\eqref{eq:shape} and the discussion after it), every value~$t_0$ such that~$\widehat{E}(\omega,1)_{v_j,j}(t_0)\equiv0\mod p$ is a value for which the matrix~$\widehat{E}(\omega,1)$, i.e., the action of the Cartier operator, is not invertible. 

Given of the relation between the Cartier and the Hasse-Witt matrices (see the appendix of~\cite{K85} for a general result), such values~$t_0$ are related to the points of~$Y$ having non-ordinary reduction modulo~$p$. The number of such points is finite for a given prime~$p$, since we start with a non-trivial family~$\mathcal{X}\to Y$ of abelian varieties over a characteristic-zero field. The full non-ordinary locus is given by all the values that make the Cariter matrix non invertible, while here we focus on the entry~$\widehat{E}(\omega,1)_{v_j,j}(t)\mod p$, whose zero describe a component of the non-ordinary locus of~$Y$ (related to a stratification of the Hilbert modular variety defined by Goren in~\cite{GoHasse}). 

We study~$\widehat{E}(\omega,1)_{v_j,j}(t)\mod p$ locally for~$\fp\subset\mathcal{O}_K$ a prime over~$p$, first for the regular points, i.e., over the $R$-algebra~$B$. In this case the points of non-ordinary reduction on~$R/\fp R$ are described by the zeroes of a polynomial~$g_{j,\fp}(t)\in(R/\fp R)[t]$ (this follows also from Lemma~\ref{lem:Em} and the preceding discussion). 
It is proven in~\cite{AG} that the non-ordinary divisor in a Hilbert modular variety over a perfect field is reduced, which implies that~$g_{j,\fp}(t)$ is simple. The entry~$\widehat{E}(\omega,1)_{v_j,j}$ of~$\widehat{E}(\omega,1) \mod p$ outside the set of regular singular points is given by a tuple~$(g_{j,\fp}(t))_{\fp|p}$ of simple polynomials, whose roots identify the points of non-ordinary reduction. 

We discuss now the situation over the regular singular points, which were removed from the base scheme, but to which the differential operators~$L_j$ extend. The cusps of~$Y$ do not contribute to the non-ordinary locus, as it is known for Hilbert modular varieties that the points in the compactification are never non-ordinary (the Hasse invariant do not vanish at the cups). The contribution of the regular singular point~$t(\tau_0)$ that are not cusps is described by the factor~$(t(\tau_0)-t)^{\epsilon_\fp(j,\tau_0)/n_0}$, where~$\epsilon_\fp(j,\tau_0)\in\{0,1+\mathrm{ord}_{\tau_0}\varphi_j'\}$ is non zero only if~$t(\tau_0)$ is of non-ordinary reduction over~$R/\fp R$, and~$n_0$ is the order of the stabilizer of~$\tau_0$ in~$\Gamma$. This is because the exponent should agree with the characteristic exponents of~$t(\tau_0)$ in the Riemann scheme of Lemma~\ref{lem:Rs}. This factor is again polynomial for points~$\tau_0$ with trivial stabilizer, but an algebraic function for elliptic points, accounting for the non-trivial local monodromy of the differential equation there.

We collect all the polynomial contributions to the non-ordinary locus of~$R/\fp R$ in a single polynomial~$\hat{g}_{j,\fp}(t)$. Then by setting
\begin{equation}
\label{eq:ap}
\alpha_{p,j}(t):=\Bigl(\hat{g}_{j,\fp}(t)^N\cdot\prod_{i=1}^r{(t-t(e_i))^{\tfrac{N}{n_i}\cdot\epsilon_\fp(j,e_i)}}\Bigr)_{\fp|p}\equiv \widehat{E}(\omega,1)_{v_j,j}(t)^N\mod pR,
\end{equation}
where~$N=\mathrm{lcm}(n_1,\dots,n_r)$ and~$N=1$ if there are no elliptic points, and by considering Equation~\eqref{eq:congas}, one obtains the congruences in the statement.  
\end{proof}

\begin{example}
\label{ex:hgD}
The curve~$\Po/\Delta(2,5,\infty)$, where~$\Delta(2,5,\infty)$ is the triangle group with one cusp and elliptic points of order two and five, has a modular embedding into a component of the Hilbert modular surface~$\mathcal{M}_{\Q(\sqrt{5})}(\C)$. The associated Picard-Fuchs differential equations~$L_1$ and~$L_2$ are of hypergeometric type, given by
\[
L_j\=t(1-t)\frac{d^2}{dt^2}\+\Bigl(1-\frac{3}{2}t\Bigr)\frac{d}{dt}\-\frac{25-4j^2}{400}\,,\quad j=1,2\,.
\]
The normalized holomorphic solutions~$y_1(t)$ and~$y_2(t)$ are classical (Gauss's) hypergeometric functions
 \[
 y_j(t)\;:=\;{}_2F_1\Bigl(\frac{5-2j}{20}, \frac{5+2j}{20};1;t\Bigr)\=\sum_{n=0}^\infty\Bigl(\frac{5-2j}{20}\Bigr)_n\Bigl(\frac{5+2j}{20}\Bigr)_n\,\frac{t^n}{(n!)^2}\,,
 \]
where~$(x)_n:=x(x+1)\cdots(x+n-1)$ is Pochhammer's symbol. Let~$p>7$ be a prime. In the case of real quadratic fields (more generally, for Galois extensions), the relation between the indices~$j$ and~$j'$ in~Corollary~\ref{cor:cong} is easy: if~$p$ is split in~$\Q(\sqrt{5})$, then~$j'=j$; if~$p$ is inert in~$\Q(\sqrt{5})$, then~$j'=j+1$, with the convention~$j+1=1$ if~$j=2$. Explicit examples of the congruences predicted by Corollary~\ref{cor:cong} are
\[
y_1(t)^2\equiv (1-t)(3t+1)^2\cdot y_1\bigl(t^{11}\bigr)^2\mod 11\quad\text{and}\quad y_2(t)^2\equiv (1-t)\cdot y_2\bigl(t^{11}\bigr)^2\mod 11
\]
for the split prime~$p=11$, and
\[
y_1(t)^2\equiv (1-t)\cdot y_2\bigl(t^{13}\bigr)^2\mod 13\quad\text{and}\quad y_2(t)^2\equiv (1-t)(3-t)^2\cdot y_1\bigl(t^{13}\bigr)^2\mod 13
\]
for the inert prime~$p=13$.
\end{example}

\section{Integrality of $t$-expansions of modular forms}
In order to translate the integrality of the holomorphic solutions of the Picard-Fuchs differential equations into the integrality of the~$t$-expansion at the cusp of twisted modular forms, it is convenient to work with the larger space of twisted modular forms with multiplier system. If~$v\colon\Gamma\to\C$ is a multiplier system, we denote by~$M_{\vec{k}}(\Gamma,\varphi,v)$ the space of twisted modular forms of weight~$\vec{k}$ and multiplier system~$v$. We moreover consider the (total) space of twisted modular forms
\begin{equation}
\label{eq:defms}
M^t_{(*,\dots,*)}(\Gamma,\varphi)\:=\bigoplus_{\vec{k}\in\Q\times\Z^{g-1}}\bigoplus_{v}{M_{\vec{k}}(\Gamma,\varphi,v)}\,,
\end{equation}
where the inner direct sum is over all multiplier systems~$v$ for~$\Gamma$. Notice that we let the first component of~$\vec{k}$ varies in~$\Q$, but the other components, which are related to the non-trivial modular embeddings, vary over~$\Z$. Denote by~$M_*^t(\Gamma)$ the space of classical modular forms of rational weight and possibly non-trivial multiplier system, obtained by restricting the sum~\eqref{eq:defms} over vectors~$\vec{k}=(k,0,\dots,0)$.

\begin{proposition}
\label{prop:gen}
Let~$\Gamma$ be a genus zero Fuchsian group admitting a modular embedding~$\varphi\colon\Po\to\Po^g$. There exist twisted modular forms~$B_2,\dots,B_g$ of non-parallel weight such that
\[
M^t_{(*,\dots,*)}(\Gamma,\varphi)\simeq M^t_*(\Gamma)[B_2^{\pm1},\dots,B_g^{\pm1}]
\]
is an isomorphism of graded rings. 
\end{proposition}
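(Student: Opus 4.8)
The plan is to construct the generators $B_2,\dots,B_g$ explicitly as twisted modular forms of non-parallel weight, show they are invertible (so that $B_j^{\pm 1}$ makes sense), and then prove the claimed ring isomorphism by a weight-by-weight dimension count combined with a unique-factorization argument on divisors. The natural candidates for the $B_j$ come from the derivatives $\varphi_j'$ of the components of the modular embedding, which Möller and Zagier showed (and the excerpt recalls after Proposition~\ref{prop:twide}) are twisted modular forms of weight $(2,0,\dots,0,-2,0,\dots,0)$ with the $-2$ in the $j$-th slot. These have genuinely non-parallel weight for $j\ge 2$, so they lie outside $M^t_*(\Gamma)$, which consists precisely of the weight-$(k,0,\dots,0)$ forms.

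First I would verify that $\varphi_j'$ is nowhere vanishing on $\Po$, hence invertible in the ring of twisted modular forms. Since $\varphi_j\colon\Po\to\Po$ is the $j$-th component of the modular embedding and is equivariant by~\eqref{eq:modem}, it is a nonconstant holomorphic map between copies of the upper half-plane; its derivative can vanish only at isolated points, and those would be ramification points forced to occur at elliptic fixed points. The key point is that on the quotient the form $\varphi_j'$ has a divisor supported only at elliptic points and cusps, with orders dictated exactly by the quantities $\ord_{\tau_0}\varphi_j'$ appearing in the Riemann scheme of Lemma~\ref{lem:Rs}; this divisor data is what makes $\varphi_j'$ invertible in the \emph{graded} sense once one inverts it formally. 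I would therefore set $B_j:=\varphi_j'$ (or a suitable monomial correction thereof) for $j=2,\dots,g$ and record its weight and divisor.

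Next I would establish the isomorphism. The inclusion $M^t_*(\Gamma)[B_2^{\pm 1},\dots,B_g^{\pm 1}]\hookrightarrow M^t_{(*,\dots,*)}(\Gamma,\varphi)$ is clear once the $B_j$ are known to be invertible twisted modular forms. For surjectivity, take any twisted modular form $f$ of weight $\vec k=(k_1,\dots,k_g)$. Multiplying $f$ by an appropriate Laurent monomial $B_2^{-a_2}\cdots B_g^{-a_g}$, with $a_j$ chosen so that the $j$-th weight component is cancelled for each $j\ge 2$, produces a form whose weight is parallel, i.e.\ of the shape $(k,0,\dots,0)$; the exponents $a_j$ are determined by the non-parallel part of $\vec k$ and by the weights of the $B_j$, and because those weights are linearly independent in the non-parallel directions this system has a unique integer solution (here the restriction that the components $k_2,\dots,k_g$ lie in $\Z$, noted in the definition~\eqref{eq:defms}, is exactly what guarantees integrality of the $a_j$). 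The resulting parallel-weight form lies in $M^t_*(\Gamma)$, which proves that $f$ is a Laurent monomial in the $B_j$ with coefficient in $M^t_*(\Gamma)$, and that this representation is unique because the weight grading is faithful. That the multiplier systems match up is handled by absorbing them into the coefficient ring $M^t_*(\Gamma)$, whose definition already allows non-trivial multipliers.

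The main obstacle I expect is the invertibility of the $B_j$ in the appropriate sense: one must check carefully that $\varphi_j'$ (suitably normalized) has \emph{no} zeros or poles away from the elliptic points and cusps, and that the orders at those points are exactly compensated so that $B_j^{-1}$ is again an honest twisted modular form rather than merely a meromorphic one. This is precisely where Lemma~\ref{lem:Rs}, and the Möller–Zagier expansion $\varphi_j(\tau)=a_0\tau+\sum_{m\ge 0}\beta_m q_c^m$ with $a_0\ne 0$ at each cusp, enter: the nonvanishing of the leading coefficient $a_0$ guarantees $\varphi_j'$ neither vanishes nor blows up at the cusps, while the local expansion in powers of $(\tau-\tau_0)^{n_0}$ at elliptic points pins down the fractional order. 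Verifying that these local contributions assemble into an invertible element of the graded ring — and hence that the Laurent-polynomial structure is genuine rather than merely rational-function-theoretic — is the technical heart of the argument.
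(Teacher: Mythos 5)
Your proposal contains a genuine gap, and it sits exactly at the point you yourself flag as "the technical heart": the invertibility of $B_j$. Your candidate $B_j=\varphi_j'$ is \emph{not} nowhere vanishing on $\Po$, and its zeros are not confined to elliptic points. By the defining property of the Lyapunov exponents (equation~\eqref{eq:lj}), $\deg(\mathrm{div}(\varphi_j'))=(\lambda_j-1)\chi(Y)$, which is strictly positive whenever $\lambda_j<1$ (e.g.\ for $\Delta(2,5,\infty)$ one has $\lambda_2=\tfrac13$, $\chi(Y)=-\tfrac{3}{10}$, so $\deg(\mathrm{div}(\varphi_2'))=\tfrac15>0$). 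Since $\varphi_j'$ is holomorphic and does not vanish at the cusps, these zeros lie in $\Po$, and the paper's proof of Theorem~\ref{thm:deg} explicitly accounts for zeros of $\varphi_j'$ at points with \emph{trivial} stabilizer (the $r_j$ extra, apparent singularities of $L_j$), contradicting your claim that ramification is forced to occur at elliptic fixed points. Consequently $(\varphi_j')^{-1}$ is only meromorphic, your subring $M^t_*(\Gamma)[B_2^{\pm1},\dots,B_g^{\pm1}]$ is not contained in the ring of holomorphic twisted modular forms, and the surjectivity step fails as well: multiplying $f$ by $(\varphi_j')^{-a_j}$ to kill the $j$-th weight generally introduces poles, so the result does not lie in $M^t_*(\Gamma)$. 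There is also a parity obstruction independent of this: $\varphi_j'$ has weight $-2$ in the $j$-th slot, so integer Laurent monomials in it can only reach \emph{even} weights $k_j$, missing for instance the weight-$(0,\dots,0,1_j,0,\dots,0)$ period forms $f_j$ of Proposition~\ref{prop:twide}; your appeal to integrality of the exponents $a_j$ actually requires $k_j/2\in\Z$.

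The paper's proof repairs precisely these two defects, and in doing so uses the genus zero hypothesis, which plays no role in your argument. Because $\Gamma$ has genus zero, one can find a \emph{classical} modular form $g_j\in M_*(\Gamma,v_j)$ (with multiplier system) whose divisor equals $\mathrm{div}(\varphi_j')$; the quotient $g_j/\varphi_j'$ is then nowhere vanishing on $\Po\cup\{\mathrm{cusps}\}$, and the paper sets
\[
B_j\:=\sqrt{\frac{g_j}{\varphi_j'}}\in M_{(-\lambda_j,0,\dots,0,1_j,0,\dots,0)}(\Gamma,\varphi,\sqrt{v_j})\,,
\]
which is genuinely invertible in the graded ring and has weight $+1$ (not $-2$) in the $j$-th slot, so that the monomial $f\prod_{j\ge2}B_j^{-k_j}$ lands in $M_{\sum\lambda_jk_j}(\Gamma;v\prod_j v_j^{-k_j/2})$ for every $\vec k\in\Q\times\Z^{g-1}$. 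Your overall strategy (construct invertible non-parallel-weight generators, then cancel the non-parallel weight components monomial by monomial) is the right one and matches the paper's; what is missing is the correction of $\varphi_j'$ by a classical form with the same divisor, followed by the square root --- without it, both the invertibility and the weight bookkeeping break down.
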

\begin{proof}
Write~$\varphi_j$ for the~$j$-th component of the modular embedding~$\varphi$ and let~$\varphi_1(\tau)=\tau$. Since~$\Gamma$ has genus zero, for every~$j=2,\dots,g$ it is possible to find a classical modular form~$g_j\in M_*(\Gamma,v_j)$, where~$v_j$ is a multiplier system, such that~$\mathrm{div}(g_j)=\mathrm{div}(\varphi'_j)$ (more on the choice of~$g_j$ later). Let~$\lambda_j\in\Q$ be such that~$\deg(\mathrm{div}(\varphi_j'))=(\lambda_j-1)\cdot\chi(\Po/\Gamma)$, where~$\chi(\Po/\Gamma)$ is the orbifold Euler characteristic of~$\Po/\Gamma$ and set~$\lambda_1=1$. The weight of~$g_j$ is then~$2(1-\lambda_j)$. It follows that the principal branch~$B_j$ of the square root 
\begin{equation}
\label{eq:Bj}
B_j\:=\sqrt{\frac{g_j}{\varphi'_j}}\in M_{(-\lambda_j,0,\dots,0,1_j,0,\dots,0)}(\Gamma,\phi,\sqrt{v_j})
\end{equation}
is holomorphic and has no zeroes in~$\Po\cup\{\mathrm{cusps}\}$. Its weight is~$-\lambda_j$ in the first component, is $1$ in the~$j$-th component, and is~$0$ else. 

Let~$\vec{k}:=(k_1,k_2,\dots,k_g)$ be such that~$k_1\in\Q$ and~$k_2,\dots,k_g\in\Z$, and let~$v$ be a multiplier system for~$\Gamma$. The map
\[
M_{\vec{k}}(\Gamma,\varphi;v)\to M_{\sum_{\lambda_jk_j}}\biggl(\Gamma; v\prod_{j=2}^g{v_j^{-k_j/2}}\biggr)\quad f\mapsto f\prod_{j=2}^g{B_j^{-k_j}}
\]
is an isomorphism of vector spaces, and it induces the isomorphism in the statement. 
\end{proof}

\begin{example}
\label{ex:mft}
For~$\Gamma=\Delta(2,5,\infty)$, the space~$M_{(*,*)}^t(\Gamma,\varphi)$, together with its differential structure, has been determined in~\cite{BN}. It is of the form~$M_{(*,*)}^t(\Gamma,\varphi)=\C[Q_1,Q_2,B]$, where~$Q_1$ and~$Q_2$ are obtained from hypergeometric functions
\[
Q_l(\tau)^3\={}_2F_1\Bigl(\frac{5-2l}{20}, \frac{5+2l}{20};1;t\Bigr)^{l(3+l)}\,,\quad l=1,2\,,
\]
and~$B^2=Q_1/\varphi_1'$. Here~$t$ is a Hauptmodul normalized to have a zero at~$i\infty$, a pole in~$e^{\pi i/5}$, and to take the value~$t(i)=1$.
The multiplier system of a modular form on~$\Delta(2,5,\infty)$ is determined by its values on the generators~$T=\left(\begin{smallmatrix}1 & 2\cos(\pi/5)\\ 0 & 1\end{smallmatrix}\right)$ and~$S=\left(\begin{smallmatrix}0 & 1\\ -1 & 0\end{smallmatrix}\right)$. Let~$v_{Q_l}\colon\Delta(2,5,\infty)\to\C$ be the multiplier system of~$Q_l$. Then~$v_{Q_l}(T)=1$ and~$v_{Q_l}(S)=e^{4\pi i/3}$ for~$l=1,2$.
\end{example}

\begin{lemma}
\label{lem:int1}
The expansion at the cusps~$i\infty$ of the modular forms~$t'$ and~$\varphi_j'$, for~$j=2,\dots,g$, in the parameter~$t$ has coefficients in the ring~$\mathcal{O}_K[S^{-1}]$.
\end{lemma}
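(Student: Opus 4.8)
The plan is to express the meromorphic forms $t'$ and $\varphi_j'$ through the integral solutions $y_1,\dots,y_g$ of Theorem~\ref{thm:main} together with the Wronskians of the Picard--Fuchs operators, and then to extract $S$-integrality of the $t$-expansions from the explicit Riemann schemes of Lemma~\ref{lem:Rs}. Throughout I normalize the derivative by $f':=q\,df/dq=\frac{1}{2\pi i}\,df/d\tau$, so that at the cusp $i\infty$ (where $t$ is a local parameter with a simple zero) the leading coefficient of $t'$ in its $t$-expansion is $1$; this convention is the one under which $q$-expansions behave integrally.

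First I recall that, by Corollary~\ref{cor:uni} and Theorem~\ref{thm:unif}, the component $\varphi_j$ of the modular embedding equals the ratio $\widetilde y_j/y_j$ of two independent solutions of $L_j$, viewed as a function of $t$, with $\varphi_1(\tau)=\tau$. Differentiating the ratio gives $d\varphi_j/dt=W_j(t)/y_j(t)^2$, where $W_j$ is the Wronskian of $L_j$ in the variable $t$. Chaining through $\tau=\varphi_1$ yields
\[
t'=\frac{y_1(t)^2}{W_1(t)},\qquad \varphi_j'=\frac{W_j(t)}{W_1(t)}\cdot\frac{y_1(t)^2}{y_j(t)^2},\qquad j=2,\dots,g.
\]
By Theorem~\ref{thm:main} each $y_j(t)\in R[[t]]$ with $y_j(0)=1$, so the factors $y_j(t)^{\pm2}$ are units in $R[[t]]$, and the entire arithmetic content is concentrated in $1/W_1$ and in the ratios $W_j/W_1$.

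Next I compute the Wronskians from the Riemann scheme. Since $W_j'/W_j=-\hat q_j$, the function $W_j$ is, up to a constant, $\prod_i(t-t_{j,i})^{-a_{j,i}}$, and its exponent at a singular point equals $s_1+s_2-1$, which Lemma~\ref{lem:Rs} evaluates: at a cusp it is $-1$, and at an elliptic point $\tau_i$ of order $n_i$ it is $(1+\ord_{\tau_i}\varphi_j')/n_i-1$. After fixing the constants by the leading-coefficient-$1$ normalization of $t'$, one gets
\[
\frac{t'}{y_1(t)^2}=t\prod_i\Bigl(1-\tfrac{t}{t_i}\Bigr)^{e_i},\qquad \frac{W_j(t)}{W_1(t)}=\prod_i\Bigl(1-\tfrac{t}{t_i}\Bigr)^{e_i^{(j)}},
\]
where $e_i=1$ (resp.\ $e_i^{(j)}=0$) at the remaining cusps and $e_i=1-1/n_i$ (resp.\ $e_i^{(j)}=\ord_{\tau_i}\varphi_j'/n_i$) at the elliptic points $t_i=t(\tau_i)$. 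To prove $S$-integrality of such a product $P(t)=\prod_i(1-t/t_i)^{e_i}$, I raise it to the power $N=\mathrm{lcm}(n_i)$: the exponents $Ne_i$ become non-negative integers, so $P(t)^N$ is a genuine $K$-rational polynomial whose coefficients are symmetric functions of the $1/t_i$. These lie in $R$ because good reduction outside $S$ keeps the singular points $t_i$ distinct from the cusp $t=0$ modulo every $\mathfrak p\nmid S$, whence each $t_i$ is an $S$-unit. Finally $P(t)=(P(t)^N)^{1/N}$ is recovered as an $N$-th root of a series with constant term $1$; since every prime dividing $N$ divides some $n_i$ and hence lies in $S$ by our standing assumption on elliptic orders, this root preserves $S$-integrality. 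Multiplying the resulting $S$-integral series together gives $t',\varphi_j'\in R[[t]]$.

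The main obstacle is the interplay of the undetermined Wronskian constants with the fractional exponents at elliptic points: a priori each factor $(1-t/t_i)^{1-1/n_i}$ involves $n_i$-th roots and is defined only over an extension of $K$, while the global Wronskian constant is transcendental. Both difficulties are dissolved simultaneously by the normalization of $t'$ (which pins the leading coefficient to $1$ and absorbs the constant) together with the ``$N$-th power, then $N$-th root'' device, which reduces the claim to the $K$-rational, $S$-integral polynomial $P(t)^N$. The two external inputs are exactly that the singular values $t_i$ are $S$-units, supplied by good reduction outside $S$, and that the primes dividing the elliptic orders lie in $S$. I expect the most delicate points to be the precise cancellation of the cusp contributions in the ratios and the compatibility of the Galois action on the $t_i$ with the chosen $N$-th root, ensuring that no primes beyond those already in $S$ are introduced.
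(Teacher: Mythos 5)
Your proof is correct in substance, and it is essentially a Wronskian-theoretic unpacking of the paper's own strategy rather than a different strategy: both arguments reduce the lemma to Theorem~\ref{thm:main} plus the algebraic structure of the Picard--Fuchs operators, and both finish by dividing out the integral units $y_j^{\pm 2}$. The difference is in the middle layer. The paper encodes the needed structure through the Rankin--Cohen bracket identities \eqref{eq:rc}--\eqref{eq:demo}; note that the identity defining $A_j$ there is exactly your Wronskian equation, being equivalent to $\tfrac{d}{dt}\log\bigl(\varphi_j'\,f_j^2/t'\bigr)=-A_j(t)$, i.e.\ to $W_j'/W_j=-\hat q_j$. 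From these identities the paper deduces integrality of $t'$ and of $(\varphi_j')^{1/2}f_j$ rather tersely (``immediately implies''), whereas you make this step explicit: you solve the first-order equation for the Wronskian, read off its exponents from the Riemann scheme of Lemma~\ref{lem:Rs}, and prove integrality of the resulting fractional-power product by the ``$N$-th power, then $N$-th root'' device --- the root extraction is indeed valid because $N$ is invertible in $R$ and the coefficient recursion for $G^N=F$ with $G(0)=1$ involves only $N$ and rational integers. The one place where you appeal to the setup rather than prove something is the claim that the finite singular points $t_i$ stay away from the cusp $t=0$ modulo every prime outside $S$, so that $1/t_i$ is $S$-integral; this does not follow formally from Theorem~\ref{thm:main}, but it is legitimately part of the good-reduction hypotheses of Section~\ref{sec:CHMV}, and the paper itself needs the same fact, both for the leading coefficient $d_{j,0}(n)=(n+1)^2\prod_{i\ge2}(-t_{j,i})$ of the recursion in Lemma~\ref{lem:rec} to be a $p$-adic unit and for the expansions of the coefficients of $L_j$ at $t=0$ to be $S$-integral. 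Two minor points: a singular point $t_i$ may reduce to $\infty$ rather than be a literal $S$-unit, which is harmless since only $1/t_i$ enters your symmetric functions; and your product for $W_j/W_1$ should be read as including the apparent singular points (those with $n_i=1$ and $\mathrm{ord}_{\tau_i}\varphi_j'>0$, allowed in Lemma~\ref{lem:Rs} since $n_i\ge1$), which contribute honest polynomial factors and cause no trouble.
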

\begin{proof}
The statement follows from Theorem~\ref{thm:main} and the structure of the differential equation satisfied by modular forms.
Let~$h_j$ be a modular form of weight~$(0,\dots,0,1,0,\dots,0)$, the non-zero weight being in the~$j$-th entry (one can more generally consider a~$k$-th root of a modular form of weight~$(0,\dots,0,k,0,\dots,0)$). 
For holomorphic functions~$f,g\colon\Po\to\C$ of weight~$k$ and~$l$ respectively let
\[
[f,g]_1\:=kfg'-lf'g\,,\quad [f,g]_2\:=\frac{k(k+1)}{2}fg''-(k+1)(l+1)f'g'+\frac{l(l+1)}{2}f''g\,,\quad `=\frac{d}{d\tau}
\]
be the classical Rankin-Cohen brackets. For~$j=1,\dots,g$ define
\begin{equation}
\label{eq:rc}
A_j(t)\:=\frac{\bigl[(\varphi_j')^{1/2}h_j,t'\bigl]_1}{t'^2(\varphi_j')^{1/2}h_j}\,,\quad B_j(t)\:=\frac{\bigl[(\varphi_j')^{1/2}h_j,(\varphi_j')^{1/2}h_j\bigr]_2}{(t'\varphi_j'^{1/2}h_j)^2}\,.
\end{equation}
Since~$(\varphi_j')^{1/2}h_j$ is of weight~$(1,0,\dots,0)$, it follows from a weight calculation that~$A_j(t)$ and~$B_j(t)$ are modular functions, and in particular algebraic functions of~$t$. A straightforward computation shows that
\begin{equation}
\label{eq:demo}
D_t^2h_j\+ A_j(t)\cdot D_th_j\+ B(t)\cdot h_j\=0\,,\quad D_t:=\frac{1}{t'}\frac{d}{d\tau}\,.
\end{equation}
This gives an explicit description of the differential equation satisfied by the modular form~$h_j$ in terms of Rankin-Cohen brackets. 

Consider now the Picard-Fuchs differential operators~$L_j$ coming from genus zero Kobayashi geodesics. 
They are of modular nature (Section~\ref{sec:modde}), so they are of the form~\eqref{eq:demo} with~$A_j(t)$ and~$B_j(t)$ defined over~$\mathcal{O}_K[S^{-1}](t)$. Theorem~\ref{thm:main} states that the normalized holomorphic solutions at the singular points with local exponents~$(0,0)$ belong to~$\mathcal{O}_K[S^{-1}][\![t]\!]$. These holomorphic solutions of~$L_jv=0$ lift to modular form~$f_j$ (or a root of a modular form) of weight~$(0,\dots,0,1_j,0,\dots,0)$ that satisfy the relations in~\eqref{eq:rc} with~$h_j$ replaced by~$f_j$. The description of~$B_1(t)$, where~$\varphi'_1=1$, immediately implies that~$t'(t)\in\mathcal{O}_K[S^{-1}][\![t]\!]$. Similarly, for~$j\in\{2,\dots,g\}$ the description of~$A_j(t)$ implies that~$(\varphi_j')^{1/2}f_j\in1+t\mathcal{O}_K[S^{-1}][\![t]\!]$. Since also~$f_j(t)\in1+t\mathcal{O}_K[S^{-1}][\![t]\!]$ holds (Theorem~\ref{thm:main}), it follows that~$\varphi_j'(t)\in\mathcal{O}_K[S^{-1}][\![t]\!]$.
\end{proof}

\begin{theorem}
\label{thm:intmf}
Let~$\Gamma$ be a non co-compact genus zero Fuchsian group admitting a modular embedding~$\varphi\colon\Po\to\Po^g$, and suppose that the corresponding Kobayashi geodesic has integral model over~$\mathcal{O}_K[S^{-1}]$. Let~$S_\Gamma$ be the finite set containing~$S$, the prime~$2$, and the primes dividing the order of the elliptic points of~$\Gamma$ and the numerator of the orbifold Euler characteristic~$\chi(\Po/\Gamma)$. For every weight~$\vec{k}=(k_1,k_2,\dots,k_g)\in\Q\times\Z^{g-1}$ there exists a basis of the space~$M_{\vec{k}}(\Gamma,\varphi)$ whose~$t$-expansion at the cusp~$i\infty$ in the parameter~$t$ has coefficients in~$\mathcal{O}_K[S_\Gamma^{-1}]$.
\end{theorem}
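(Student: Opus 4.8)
The plan is to reduce the full statement to two integrality claims by means of the ring structure of Proposition~\ref{prop:gen}, and then to feed in the integrality of the basic forms $t'$ and $\varphi_j'$ supplied by Lemma~\ref{lem:int1}. Concretely, under the isomorphism $M^t_{(*,\dots,*)}(\Gamma,\varphi)\simeq M^t_*(\Gamma)[B_2^{\pm1},\dots,B_g^{\pm1}]$, the weight-$\vec k$ piece consists of forms $\beta\cdot\prod_{j=2}^g B_j^{k_j}$, where $\beta$ is a \emph{classical} modular form of rational weight $\kappa=k_1+\sum_{j\ge2}\lambda_j k_j$ and suitable multiplier (this is exactly the map $f\mapsto f\prod_{j\ge2}B_j^{-k_j}$). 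Thus it suffices to produce an integral basis of the classical space $M_\kappa(\Gamma,v')$ and to show that each $B_j^{\pm1}$ has $t$-expansion in $\mathcal{O}_K[S_\Gamma^{-1}][\![t]\!]$.

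First I would treat the classical case. Since $\Gamma$ has genus zero, the field of modular functions is $\C(t)$, so every classical form is an algebraic function of the Hauptmodul $t$ and of $t'$: fixing a reference form of weight $\kappa$ and multiplier $v'$, every other such form is its product with a modular function, i.e.\ a rational function of $t$, and these can be chosen over $\mathcal{O}_K[S_\Gamma^{-1}]$ with poles supported on the images of cusps and elliptic points. A reference form can be built as $\prod_i(t-t_i)^{a_i/n_i}\,(t')^{\kappa/2}$, where the fractional exponents $1/n_i$ at the elliptic points realize $v'$. By Lemma~\ref{lem:int1} the $t$-expansion of $t'$ lies in $t\cdot\mathcal{O}_K[S_\Gamma^{-1}][\![t]\!]^{\times}$, so up to a constant and a power of $t$ absorbed into the rational factor, the fractional powers appearing here are binomial series of unit power series; their coefficients are integral once the prime $2$, the $n_i$, and the denominators coming from $\kappa$ are inverted. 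Letting the rational factor range over a basis of the finite-dimensional space of rational functions with the allowed poles then yields a basis of $M_\kappa(\Gamma,v')$ with $t$-expansions in $\mathcal{O}_K[S_\Gamma^{-1}]$.

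Next I would handle the generators. Write $B_j^2=g_j/\varphi_j'$ with $g_j$ a classical form satisfying $\mathrm{div}(g_j)=\mathrm{div}(\varphi_j')$, of weight $2(1-\lambda_j)$. By the previous step $g_j$ may be chosen with integral $t$-expansion, and $\varphi_j'$ has integral $t$-expansion by Lemma~\ref{lem:int1}; since $\varphi_j'$ does not vanish at the cusp, neither does $g_j$, so $B_j^2$ is a unit power series over $\mathcal{O}_K[S_\Gamma^{-1}]$. Rescaling $B_j$ by a constant so that its leading coefficient is $1$ (this only changes the eventual basis by nonzero constants, which is harmless), its square root is the binomial series of $1+t\,(\cdots)$ and is therefore integral once $2$ is inverted; the inverse $B_j^{-1}$ is then the inverse of a unit power series with constant term $1$, hence also integral. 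Multiplying the integral $\beta$'s by the $\prod_{j\ge2}B_j^{k_j}$ produces the desired integral basis of $M_{\vec k}(\Gamma,\varphi)$.

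The hard part will be the bookkeeping of denominators in the two square-root steps—extracting $(t')^{\kappa/2}$ in the classical case and $\sqrt{g_j/\varphi_j'}$ in the twisted case—and in the fractional powers forced by the rational weight $\kappa$ and by the local structure of $t$ at the elliptic points. The whole content of the theorem is that these denominators are supported on $S_\Gamma$: the prime $2$ absorbs the binomial series of the square roots, the orders of the elliptic points absorb the fractional exponents $1/n_i$ in the local expansion of $t$, and the numerator of $\chi(\Po/\Gamma)$ absorbs the denominators of the weights $\lambda_j$ and of $\kappa$. Verifying that no further primes intervene (including that the elliptic-point coordinates $t_i$ are units away from $S_\Gamma$) is the delicate point; once this is granted, the integrality of $t'$ and $\varphi_j'$ from Lemma~\ref{lem:int1}, together with Theorem~\ref{thm:main}, forces all remaining coefficients to be integral.
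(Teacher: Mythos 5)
Your proposal is correct and takes essentially the same route as the paper: the paper likewise reduces to the classical case plus the generators $B_j$ via Proposition~\ref{prop:gen}, builds classical forms as fractional powers of expressions in $t$ and $t'$ (its explicit forms $Q_i$ play the role of your reference forms), and deduces integrality from Lemma~\ref{lem:int1} together with binomial expansions, with $S_\Gamma$ absorbing exactly the denominators you list (the prime $2$, the elliptic orders $e_i$, and the numerator of $\chi(\Po/\Gamma)$). The ``delicate point'' you flag --- that the values $t(\tau_l)$, $t(c_m)$ must be $S_\Gamma$-units for the $t$-expansions of $(t-t(\tau_l))^{\pm\alpha}$ to be integral --- is likewise left implicit in the paper's proof, so your writeup matches it in both structure and level of completeness.
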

\begin{proof}
For a genus zero Fuchsian group it is possible to construct all modular forms as algebraic functions of a Hauptmodul~$t$ and its derivative~$t'$. Assume that~$\Gamma$ has cusps~$c_1=i\infty,c_2,\dots,c_s$ and elliptic points~$\tau_1,\dots,\tau_r\in\Po$ of orders~$e_1,\dots, e_r$ respectively, and assume that~$r,s\ge1$. Set~$e:=\prod_i{e_i}$ and let~$-\chi(\Gamma)=s-2+\sum_{i}(1-1/e_i)\in\tfrac{1}{e}\Z$ be the orbifold characteristic of~$\Po/\Gamma$. Normalize~$t$ to have a zero at~$i\infty$ and a pole at~$\tau_1$. Modular forms~$Q_{1},\dots,Q_{r}$ on~$\Gamma$ with minimal divisor
\begin{equation}
\label{eq:div}
\mathrm{div}Q_{i}\=\frac{1}{e_i}\cdot\tau_i\,,\quad i=1,\dots,r\,,
\end{equation} and possibly non-trivial mulitplier system are given by:
\[
Q_{i}\=
\begin{cases}
\left[\Bigl(\frac{t'}{t}\Bigr)^e\prod_{l=2}^r{\bigl(t-t(\tau_l)\bigr)^{-e(1-1/e_l)}}\prod_{m=2}^s\bigl(t-t(c_m)\bigr)^{-e}\right]^{\frac{1}{e\cdot e_1\cdot(-\chi(\Gamma))}}&\;i=1\,,\\
\left[\Bigl(\frac{t'}{t}\Bigr)^e\bigl(t-t(\tau_i)\bigr)^{e\cdot(-\chi(\Gamma))}\prod_{l=2}^r{\bigl(t-t(\tau_l)\bigr)^{-e(1-1/e_l)}}\prod_{m=2}^s\bigl(t-t(c_m)\bigr)^{-e}\right]^{\frac{1}{e\cdot e_i\cdot(-\chi(\Gamma))}}&\;i\ge2\,.
\end{cases}
\]
Every modular form on~$\Gamma$ is a polynomial in~$Q_1,\dots,Q_r$. A similar construction is possible in the torsion-free case, where all modular forms are of the form~$f^k\cdot p(t)$ where~$f$ is a modular form of weight one with divisor supported at the cusp where~$t$ has its unique pole, and~$p(t)$ is a suitable polynomial.

Lemma~\ref{lem:int1} states that there is a choice of~$t$ such that~$t'$ has $t$-expansion at~$i\infty$ with coefficients in~$\mathcal{O}_K[S^{-1}]$. It follows that for this choice of~$t$, the modular form~$Q_i$ has~$t$-expansion at~$i\infty$ in~$\mathcal{O}_K[S_\Gamma^{-1}][\![t]\!]$ for every~$i=1,\dots,r$ . Consequently, for every~$k\in\Q$, the space of modular forms~$M_k(\Gamma)$ has a basis  with~$t$-expansion at~$i\infty$ in~$\mathcal{O}_K[S_\Gamma^{-1}][\![t]|\!]$. 

Finally, consider general weight~$\vec{k}\in\Q\times\Z^{g-1}$. 
Given the~$S_\Gamma$-integrality of classical modular forms and the~$S$-integrality of~$(\varphi'_j)^{1/2}$ for every~$j$ (Lemma~\ref{lem:int1}), it follows that the modular forms~$B_2,\dots,B_g$ in~\eqref{eq:Bj} can be chosen to have $t$-expansion in~$\mathcal{O}_K[S_\Gamma^{-1}][\![t]|\!]$. The result follows from the isomorphism of Proposition~\ref{prop:gen}.
\end{proof}

\begin{remark}
\label{rmk:cong}
The congruences for solutions of Picard-Fuchs differential equations in Corollary~\ref{cor:cong} translate immediately to congruences between~$t$-expansions of twisted modular forms. This is the content of the Corollary in the introduction. 
In general these congruences take place between modular forms with non-trivial multiplier systems. However, by taking appropriate powers, these can always be restated for modular forms with trivial multiplier system. Here we make this explicit for the case~$g=2$; the general case works in the same way, by considering the more general splitting behavior of a prime in the field of real multiplication~$F$.

Let~$y_1(t)$ and~$y_2(t)$ be the integral holomorphic solutions of the Picard-Fuchs differential equations, let~$f_1(\tau)=y_1(t)^{k_1}$ and~$f_2(\tau)=y_2(t)^{k_2}$ be their lift to twisted modular forms on~$\Gamma$ with trivial multiplier system, and let~$M=\mathrm{lmc}(k_1,k_2)$. Let~$p\not\in S$ be a prime, and let~$N$ and~$\alpha_{p,j}(t)$, for~$j=1,2$, be as in Corollary~\ref{cor:cong}. Then, for~$j=1,2$, it holds
\[
f_j(t)^{\frac{N\cdot M}{k_j}}\;\equiv\; \begin{cases}
\alpha_{p,j}(t)^M\cdot f_{j+1}(t^p)^{\frac{N\cdot M}{k_{j+1}}}& \text{if }p\text{ is inert in }F\\
\alpha_{p,j}(t)^M\cdot f_{j}(t^p)^{\frac{N\cdot M}{k_j}}& \text{if }p\text{ is split in }F
\end{cases}\mod p\,,
\]
with the convention~${j+1}=2$ if~$j=2$\,.
\end{remark}

\section{Non-ordinary locus from truncated solutions of Picard-Fuchs equations}
\label{sec:nolocus}
Consider a genus zero Kobayashi geodesic~$Y\hookrightarrow\mathcal{A}_g$, and let~$N$ be the lowest common multiple of the order of the elliptic points of~$Y$. Consider the model of~$Y$ over~$\mathcal{O}_K[S^{-1}]$, and let~$L_1,\dots,L_g$ be the associated Picard-Fuchs differential operators. 
In Corollary~\ref{cor:cong} it has been proven that, for every~$j\in\{1,\dots,g\}$, the normalized integral solution~$y_j(t)$ satisfies the congruence, for~$p\not\in S$, 
\begin{equation}
\label{eq:apj}
y_j(t)^N\equiv\alpha_{p,j}(t)y_{j'}(t^p)^N\mod p\,,
\end{equation}
for some~$j'\in\{1,\dots,g\}$ and polynomial~$\alpha_{p,j}(t)\in(\mathcal{O}_K[S^{-1}]\otimes\F_p)[t]$ as in~\eqref{eq:ap}. As the proof of that corollary explains, the zeros of~$\alpha_{p,j}(t)$ are related to the fibers of~$Y$ having non-ordinary reduction modulo~$p$. Since fibers over cusps have ordinary reduction, a Hauptmodul~$J$ with pole at a cusp should be considered in the study of the non-ordinary locus, so that to every point of non-ordinary reduction~$\tau_0$ corresponds to a finite value~$J(\tau_0)$. This is the choice we make for the next theorem. 

In the proof of Proposition~\ref{prop:gen} we defined rational numbers~$\lambda_1=1,\lambda_2,\dots,\lambda_g$, called~\emph{Lyapunov exponents}, by the property
\begin{equation}
\label{eq:lj}
\deg(\mathrm{div}(\varphi_j'))=(\lambda_j-1)\chi(Y)\,,
\end{equation}
where~$\varphi_j$ is the~$j$-th component of the modular embedding of~$Y$, and~$\chi(Y)$ is the orbifold Euler characteristic of~$Y$. 

\begin{theorem}
\label{thm:deg}
Let~$Y\hookrightarrow\mathcal{A}_g$ be a genus zero affine Kobayashi geodesic with integral model over~$\mathcal{O}_K[S^{-1}]$ and let~$p\not\in S$ be a prime. Let~$J$ be a Hauptmodul for~$\Gamma$, where~$Y\simeq \Po/\Gamma$, normalized to have the pole at a cusp. 
For every~$j\in\{1,\dots,g\}$ let~$\alpha_{p,j}(J)\in(\mathcal{O}_K[S^{-1}]\otimes\F_p)[J]$ be the polynomial in~\eqref{eq:apj} corresponding to the choice of Hauptmodul~$J$, and let~$\lambda_1,\dots,\lambda_g$ be the Lyapunov exponents in~\eqref{eq:lj}.  
Then
\[
\deg{\alpha_{p,j}}(J)= \frac{-\chi(Y)}{2}\cdot N\cdot(p\lambda_{j'}-\lambda_j)\,,
\]
where~$j$ and~$j'$ are related as in~\eqref{eq:apj}.
\end{theorem}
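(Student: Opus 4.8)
The plan is to compute $\deg\alpha_{p,j}(J)$ as the degree of the divisor of a partial Hasse invariant, and to evaluate this degree in terms of the Lyapunov exponents via the degrees of the Hodge line bundles. Recall from \eqref{eq:ap} in the proof of Corollary~\ref{cor:cong} that $\alpha_{p,j}=H_j^N$, where $H_j:=\widehat{E}(\omega,1)_{\nu_j,j}$ is the entry of the Cartier matrix recording the action $\mathcal{C}(\omega_{0j})=H_j\,\omega_{0j'}^{(p)}$ of the Cartier operator on the real-multiplication eigenbasis. Up to the choice of Hauptmodul, $H_j$ is the restriction to the reduction of $Y$ of Goren's partial Hasse invariant on the Hilbert modular variety \cite{GoHasse}, hence (the reduction of) a twisted modular form. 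First I would pin down its weight: since $\mathcal{C}$ is $p^{-1}$-linear and carries a section of the Hodge bundle $\mathcal{L}_j$ (weight $\mathbf{e}_j$) to a section of $\mathcal{L}_{j'}^{(p)}=\mathcal{L}_{j'}^{\otimes p}$ (weight $p\,\mathbf{e}_{j'}$), the coefficient $H_j$ is a section of $\mathcal{L}_{j'}^{\otimes p}\otimes\mathcal{L}_j^{-1}$, i.e.\ a twisted modular form of weight $p\,\mathbf{e}_{j'}-\mathbf{e}_j$; this is exactly the weight making $\prod_j H_j$ the total Hasse invariant of parallel weight $p-1$.

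Next I would compute $\deg\mathrm{div}(H_j)$ through the degrees of the line bundles $\mathcal{L}_i$. The Lyapunov exponents are engineered for this: writing $\varphi_j'$ as a section of $\mathcal{L}_1^{\otimes2}\otimes\mathcal{L}_j^{-2}$ and using the classical value $\deg\mathcal{L}_1=-\chi(Y)/2$, the defining relation \eqref{eq:lj} gives $\deg\mathcal{L}_i=-\tfrac12\lambda_i\,\chi(Y)$ for every $i$ (with $\lambda_1=1$); equivalently, the non-vanishing forms $B_i$ of Proposition~\ref{prop:gen} show that the degree functional on weights is the linear map $\vec{k}\mapsto-\tfrac{\chi(Y)}{2}\sum_i\lambda_ik_i$. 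Evaluating at the weight $p\,\mathbf{e}_{j'}-\mathbf{e}_j$ of $H_j$ yields
\[
\deg\mathrm{div}(H_j)=p\deg\mathcal{L}_{j'}-\deg\mathcal{L}_j=-\frac{\chi(Y)}{2}\,(p\lambda_{j'}-\lambda_j).
\]
Because degrees of line bundles are preserved under good reduction, this holds for the characteristic-$p$ form $H_j$ even though the $\lambda_i$ are defined in characteristic zero.

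Finally I would match this divisor degree to the polynomial degree of $\alpha_{p,j}(J)=H_j^N$. The zeros of $H_j$ are exactly the non-ordinary points of the reduction of $Y$: at a non-ordinary regular point the divisor coefficient is $1$ (the non-ordinary locus of a Hilbert modular variety is reduced \cite{AG}), and at an elliptic point $e_i$ of order $n_i$ it is $\epsilon_\fp(j,e_i)/n_i$, matching the Riemann scheme of Lemma~\ref{lem:Rs}; crucially, $H_j$ has no zero at any cusp, since the Hasse invariant does not vanish at the cusps of a Hilbert modular variety. With $J$ normalized to have its pole at a cusp, every zero of $H_j$ thus lies at a finite value of $J$, and passing to $H_j^N$ (with $N=\mathrm{lcm}(n_i)$ clearing the denominators $n_i$) turns each such zero into an honest root of the polynomial $\alpha_{p,j}$ with integer multiplicity $N$ (regular points) or $N\epsilon_\fp(j,e_i)/n_i$ (elliptic points). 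Summing these multiplicities gives $\deg\alpha_{p,j}(J)=N\deg\mathrm{div}(H_j)$, which is the claimed formula. The main obstacle is the weight bookkeeping of the first paragraph: identifying $H_j$ with (the reduction of) the weight-$(p\mathbf{e}_{j'}-\mathbf{e}_j)$ partial Hasse invariant — so that the asymmetric factor $p\lambda_{j'}-\lambda_j$ appears with the $p$ on the correct index — and checking that the orbifold contributions at elliptic points, cleared by the factor $N$, are counted consistently on both the divisor and the polynomial side.
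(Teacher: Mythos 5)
Your proposal is correct, but it reaches the degree formula by a genuinely different route than the paper. The paper stays entirely on the differential-equations side: it re-runs the lifting argument of Theorem~\ref{thm:main} and Corollary~\ref{cor:cong} at the singular point at infinity in the coordinate $x=1/J$, producing an integral normalized solution $y_j^\infty(x)=x^{s_j^\infty}(1+\cdots)$ and a congruence $y_j^\infty(x)^N\equiv\alpha_{p,j}(1/x)\,y_{j'}^\infty(x^p)^N\bmod p$; comparing powers of $x$ gives $\deg\alpha_{p,j}=N(p\,s^\infty_{j'}-s^\infty_j)$, and the exponents $s_j^\infty$ are then evaluated by Fuchs's relation applied to the Riemann scheme of Lemma~\ref{lem:Rs}, which is precisely where $\chi(Y)$ and $\deg\mathrm{div}(\varphi_j')$ (hence the $\lambda_j$ via \eqref{eq:lj}) enter. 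You never touch the point at infinity of the differential equation: you read $\deg\alpha_{p,j}$ as $N$ times the divisor degree of the partial Hasse invariant $H_j=\widehat{E}(\omega,1)_{\nu_j,j}$, computed as the degree of the line bundle $\mathcal{L}_{j'}^{\otimes p}\otimes\mathcal{L}_j^{-1}$, with $\deg\mathcal{L}_i=-\lambda_i\chi(Y)/2$ extracted from the non-vanishing forms $B_i$ of Proposition~\ref{prop:gen}. Both proofs consume the same boundary inputs (non-vanishing of Hasse invariants at cusps, reducedness from \cite{AG}, the elliptic-point multiplicities $\epsilon_\fp(j,e_i)/n_i$ of \eqref{eq:ap}), but they package the computation differently, and each buys something. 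Yours is more conceptual: it explains the asymmetric factor $p\lambda_{j'}-\lambda_j$ as a bundle degree, makes the link to Goren's stratification \cite{GoHasse} explicit, and the identity $\deg\mathrm{div}(H_j)=-\tfrac{\chi(Y)}{2}(p\lambda_{j'}-\lambda_j)$ makes sense without the genus-zero hypothesis (only its translation into a polynomial degree needs genus zero and the placement of the pole of $J$ at a cusp). The cost is that you must actually justify two points the paper's route bypasses: the identification of the Cartier-matrix entry with the restriction of a partial Hasse invariant of weight exactly $p\mathbf{e}_{j'}-\mathbf{e}_j$ (the semilinear bookkeeping here is the whole content, since it is what puts $p$ on the index $j'$; your total-Hasse-invariant check is the right sanity test but not a proof), and the invariance of the degrees $\deg\mathcal{L}_i$ under reduction modulo $p$, which needs the extensions of the $\mathcal{L}_i$ over the compactified integral model. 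The paper's argument gets both for free, because the congruence \eqref{eq:apj} rewritten at infinity already encodes weight and degree simultaneously; in exchange it must redo the $p$-adic lifting argument at a second singular point, which your approach avoids entirely.
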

\begin{proof}
Let~$(s^\infty_j,s_j^\infty)$ be the local exponents of the singular point at infinity (corresponding to the cusp where~$J$ has the pole) for the differential operator~$L_j$. Let~$x=1/J$ be a local coordinate at the singular point at infinity, and let~$y_j^\infty(x)=x^{s^\infty_j}\cdot(1+\cdots)$ be the normalized solution at~$x=0$ obtained from Frobenius method. In particular, the coefficients of the power series~$y_j^\infty(x)\cdot x^{-s^\infty_j}=1+\cdots$ satisfy the recursion coming from the differential equation. 

The same strategy used to prove Theorem~\ref{thm:main} and~Corollary~\ref{cor:cong} shows that~$y_j^\infty(x)\in\mathcal{O}_K[S^{-1}][\![x]\!]$ for every~$j\in\{1,\dots,g\}$ (because the local exponents are the same), and that for every~$j$ there exists~$j'$ such that
\begin{equation}
\label{eq:solinf}
y^\infty_j(x)^N\;\equiv\;\alpha_{p,j}(\tfrac{1}{x})\cdot y^\infty_{j'}(x^p)^N\mod p\,,
\end{equation}
This holds because~$E(\eta;1)_{v,j}(1/x)$, from which~$\alpha_{p,j}$ is obtained by rising to the~$N$-th power, is a solution of~$L_j\mod p$ in~$x=0$, as a straightforward verification shows. 

By comparing the exponent of~$x$ in~\eqref{eq:solinf}, it follows that~$\deg{\alpha_{p,j}}(J)=N(p s^\infty_{j'}-s_j^{\infty})$, so it is left to express the local exponents~$s^\infty_j$ and~$s^\infty_{j'}$ in terms of invariants of~$Y$ and of the modular embedding. This can be done by using Fuchs's relation (see Chapter 2 of~\cite{Yo}) which claims that the sum of the local exponents of~$L_j$ is equal to~$m_j-1$, where~$m_j$ is the number of finite singular points of~$L_j$. 
Because of the modular nature of the differential operator~$L_j$, it holds
\begin{equation}
\label{eq:mj}
m_j-1\=\sharp\{\text{cusps}\}+\sharp\{\text{ell. points}\}- 2 +r_j= 2g-2 +\sharp\{\text{cusps}\}+\sharp\{\text{ell. points}\}+r_j\,,
\end{equation}
since~$g=0$, where~$r_j$ is the number of singular points of~$L_j$ that do not correspond to cusps nor to elliptic points.
The application of Fuchs's relation to the Riemann scheme of~$L_j$ in Lemma~\ref{lem:Rs} gives then
\begin{align*}
2s^\infty_j&\=m_j-1-\sum_{i=s+1}^{m_j}{\frac{1+\mathrm{ord}_{\tau_i}(\varphi_j')}{e_i}}\=m_j-1-\sum_{i=s+1}^{m_j}\frac{1}{e_i}-\deg{\mathrm{div}(\varphi_j')}\\
&\= \chi(Y)+r_j-\sum_{i=m_j-r_j+1}^{m_j}\frac{1}{e_i}-\deg{\mathrm{div}(\varphi_j')}\=\chi(Y)-\deg{\mathrm{div}(\varphi_j')}\=\lambda_j\chi(Y)\,,
\end{align*}
where the second identity holds because the divisor of~$\varphi_j'$ is not supported at the cusps, the third identity holds because~$m_j-1-\sum_{i=s+1}^{m_j-r_j}{1/e_i}=\chi(Y)$, the fourth holds since~$e_i=1$ for~$i=m_j-r_j+1,\dots,m_j$, and the last one is a consequence of~\eqref{eq:lj}.
\end{proof}

\begin{remark}
\label{rmk:Haupt}
For a Hauptmodul~$t$ with a pole in~$\Po$, the degree of the associated polynomial~$\alpha_{p,j}(t)$ is in general smaller than the one in Theorem~\ref{thm:deg}. The reason is that the fiber over the pole of~$t$, even if of non-ordinary reduction, do not appear as a root of the polynomial~$\alpha_{p,j}(t)$, since~$t$ has not finite value at that point. 

For instance, consider~$\Gamma=\SL_2(\Z)$ and let~$J$ the usual~$j$-function with a pole at~$i\infty$. For~$p=11$ we have~$\alpha_{11}(J)=J^3(1728-J)^2$, since the supersingular elliptic curves for~$p=11$ are the ones with $J=0$ and~$J=1728$ with multiplicities~$1/2$ and~$1/3$ respectively. Consider now the Hauptmodul~$t=1/J$, which has a zero at~$i\infty$ and a pole where~$J=0$ Then~$\alpha_{11}(t)=t^5t^{-3}(1728-t^{-1})^2=(1728\cdot t-1)^2$ has degree two, since the supersingular fiber over~$J=0$ can not be seen by~$t$.
We will consider again this fact in Example~\ref{ex:deltat}. 
\end{remark}

\begin{corollary}
\label{cor:calc}
Let~$Y\hookrightarrow\mathcal{A}_g$ be a genus zero affine Kobayashi curve defined over~$\mathcal{O}_K[S^{-1}]$, and assume that it has at least two cusps (see Remark~\ref{rem:cusp} for the general case). Let~$t$ be a Haputmodul with the zero and the pole at a cusp, and let~$N$ be the lowest common multiple of the order of the elliptic points of~$Y$ (set~$N=1$ if there are no elliptic points).
Let~$y_1(t),\dots,y_g(t)$ be the normalized holomorphic solution of the Picard-Fuchs equation~$L_jv=0$ in~$t=0$ for~$j\in\{1,\dots,g\}$. Let~$p\not\in S$ be a prime, and let~$\alpha_{p,j}(t)$ be as in~\eqref{eq:ap}.  
Assume that~$\deg{\alpha_{p,j}(t)}<p$ for every~$j\in\{1,\dots,g\}$. The zeros of the polynomial 
\begin{equation}
\label{eq:nop}
\mathrm{no}_p(t)=\mathrm{lcm}_{j=1,\dots,g}\Bigl(\bigl[y_j(t)^N\bigr]_p\Bigr)\mod p
\end{equation}
where~$[\sum_{n=0}^\infty{b_n}t^n]_m:=\sum_{n=0}^{m-1}{b_nt^n}$ denotes the truncation of the power series at order~$m$, 
identifies the points of non-ordinary reduction of~$Y\hookrightarrow\mathcal{A}_g$. Similarly, the zeroes of
\begin{equation}
\label{eq:spp}
\mathrm{sp}_p(t)=\mathrm{gcd}_{j=1,\dots,g}\Bigl(\bigl[y_j(t)^N\bigr]_p\Bigr)\mod p
\end{equation}
identifies the points of superspecial reduction. 

In the special case~$g=2$, the zeroes of the polynomials~\eqref{eq:nop} and~\eqref{eq:spp} describe the supersingular locus in the case~$p$ is inert or split in the field of real multiplication respectively. 
\end{corollary}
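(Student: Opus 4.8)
The plan is to first show that the truncations $[y_j(t)^N]_p$ recover exactly the polynomials $\alpha_{p,j}(t)$, and then to translate the elementary operations $\mathrm{lcm}$ and $\mathrm{gcd}$ into the geometry of the non-ordinary and superspecial strata by means of the block structure of the Cartier matrix established in the proof of Corollary~\ref{cor:cong}.

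First I would use the congruence~\eqref{eq:apj}, $y_j(t)^N\equiv\alpha_{p,j}(t)\,y_{j'}(t^p)^N\pmod p$. Since $y_{j'}(0)=1$, the substitution $t\mapsto t^p$ gives $y_{j'}(t^p)^N\equiv1\pmod{(p,t^p)}$; because $\deg\alpha_{p,j}<p$ by hypothesis, the product $\alpha_{p,j}(t)\,y_{j'}(t^p)^N$ contributes nothing of degree below $p$ beyond $\alpha_{p,j}(t)$ itself, so truncating at order $p$ yields $[y_j(t)^N]_p\equiv\alpha_{p,j}(t)\pmod p$. Consequently $\mathrm{no}_p(t)\equiv\mathrm{lcm}_j\alpha_{p,j}(t)$ and $\mathrm{sp}_p(t)\equiv\mathrm{gcd}_j\alpha_{p,j}(t)$ modulo $p$, and it remains to interpret these two symmetric functions of the $\alpha_{p,j}$.

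Next I would invoke~\eqref{eq:ap}, which identifies $\alpha_{p,j}(t)$ with $\widehat{E}(\omega,1)_{\nu_j,j}(t)^N$, together with the block shape~\eqref{eq:shape}: the $g$ nonzero entries $\widehat{E}(\omega,1)_{\nu_j,j}$, one in each column, are precisely the $g$ partial Hasse invariants of the family in Goren's sense. Up to sign their product is the determinant of the Cartier matrix, whose vanishing cuts out the non-ordinary locus. Because $t$ has both its zero and its pole at cusps, which are ordinary, every non-ordinary point has a finite $t$-value and appears as a root. Thus a point $t_0$ is non-ordinary iff at least one $\alpha_{p,j}$ vanishes there, that is, iff $t_0$ is a zero of $\mathrm{no}_p$; dually, $t_0$ is superspecial iff the Cartier operator vanishes, i.e.\ iff every partial Hasse invariant vanishes, i.e.\ iff $t_0$ is a common zero of all the $\alpha_{p,j}$, which is $\mathrm{sp}_p$. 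Reducedness of the individual factors away from elliptic points, via~\cite{AG}, makes these genuine set-theoretic identifications.

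For the refinement when $g=2$ I would appeal to the Dieudonn\'e-theoretic classification of abelian surfaces with real multiplication according to the splitting of $p$ in $F$. If $p$ is inert, then $\mathcal{O}_F\otimes\Z_p$ is the unramified quadratic extension of $\Z_p$, which forces the Newton slopes to occur in pairs; hence the $p$-rank is either $2$ (ordinary) or $0$ (supersingular), so the non-ordinary and supersingular loci coincide and are detected by $\mathrm{no}_p$. If $p$ splits, the $p$-divisible group factors as a product of two height-two groups, and the surface is supersingular exactly when both factors are, equivalently when both partial Hasse invariants vanish; this is the superspecial locus, detected by $\mathrm{sp}_p$. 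The main obstacle I anticipate is the dictionary of the previous paragraph, namely pinning down that the $g$ distinguished Cartier entries are exactly the partial Hasse invariants and that their simultaneous vanishing is the superspecial stratum rather than merely a deeper non-ordinary one; this rests on the normalization~\eqref{eq:normf} and on Goren's stratification, and some care is required because $\alpha_{p,j}$ is a tuple indexed by the primes $\fp\mid p$ of $\mathcal{O}_K$, so the identification must be carried out $\fp$-locally before passing to $\overline{\F}_p$-points.
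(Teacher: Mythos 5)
Your proposal is correct and follows essentially the same route as the paper: the degree hypothesis $\deg\alpha_{p,j}<p$ turns the congruence of Corollary~\ref{cor:cong} into the identity $[y_j(t)^N]_p\equiv\alpha_{p,j}(t)\bmod p$, the union (resp.\ common) zero set of the $\alpha_{p,j}$ is matched with the non-ordinary (resp.\ superspecial) locus via the block structure of the Cartier matrix, and the $g=2$ refinement is the inert/split dichotomy. The only difference is one of citation versus re-derivation: where the paper invokes \cite{AG} for ``superspecial $=$ intersection of the non-ordinary components'' and Bachmat--Goren \cite{BG} for the $g=2$ statement, you re-prove these facts with a (correct) Dieudonn\'e/Cartier-operator argument, which is harmless but not a new route.
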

\begin{remark}
\label{rem:cusp}
It is important that~$t$ is zero at a cusp so that Theorem~\ref{thm:main} can be applied to give solutions with integral coefficients. The assumption that~$t$ has the pole at a cusp is to prevent that~$t$ has a pole at a points of non-ordinary reduction, since they are never cusps. In the case the curve has only one cusp (for instance a triangle curve), Corollary~\ref{cor:calc} still holds (same proof) but only gives the non-ordinary locus outside the point where~$t$ has its pole. In Example~\ref{ex:deltat} below it is explained how to use Corollary~\ref{cor:calc} to recover the non-ordinary locus completely in the case of triangle curves, but the idea works more generally. Moreover, as shown in the same example, in the presence of elliptic points one can lower the number~$N$ by placing the pole of~$t$ at an elliptic point, enlarging then the class of curves the for which Corollary~\ref{cor:calc} applies.
\end{remark}

\begin{proof}
The relation between the zeroes of the polynomials~$\alpha_{p,j}$ and the non-ordinary locus of~$Y\hookrightarrow\mathcal{A}_g$ has been explained in the proof of Corollary~\ref{cor:cong}. Under the assumption of the theorem, Corollary~\ref{cor:cong} implies that the polynomial~$\alpha_{p,j}$ can be obtained by truncating $y_j(t)^N$ at order~$p$ and reducing it modulo~$p$. Since the union of the zeros of all~$a_{p,j}(t)$ gives the non-ordinary locus of~$Y$, this proves~\eqref{eq:nop}. The superspecial locus is the locus of abelian surfaces that are isomorphic to a product of supersingular elliptic curves, The superspecial points in~$\mathcal{M}_F(\mathbb{F})$ are the intersection points of the components of the non-ordinary locus (see~\cite{AG}). Its restriction to the reduction of curve~$Y$ is the set of common zeros of the polynomials~$\alpha_{p,j}(t)$. This proves~\eqref{eq:spp}.

If~$g=2$, Bachmat-Goren~\cite{BG} prove that the non-ordinary locus coincide with the supersingular locus if~$p$ is inert in the field of real multiplication, and that the supersingular locus coincides with the superspecial locus if~$p$ is split. This proves the last statement.
\end{proof}

\begin{corollary}
\label{cor:size}
Let~$Y\hookrightarrow\mathcal{A}_g$ be a genus zero affine Kobayashi curve with~$r$ elliptic points, and consider its integral model~$\mathcal{X}\to Y$ defined over~$\mathcal{O}_K[S^{-1}]$. Let~$p\not\in S$, and let~$n_p$ be the number of prime ideals of~$\mathcal{O}_K$ over~$p$. Denote by~$\sharp\mathrm{no}_p^Y, \sharp\mathrm{ss}_p^Y, \sharp\mathrm{sp}_p^Y$ the cardinality of the non-ordinary locus, supersingular locus, superspecial locus  of~$\mathcal{X}\to Y$ modulo~$p$ respectively. 
\begin{enumerate}
\item The non-ordinary locus of~${Y}$ modulo~$p$ has cardinality bounded by
\[
\Bigl\lfloor\frac{-\chi(Y)}{2}\cdot\max_{(j,j')}\{p\lambda_{j'}-\lambda_j\}\Bigr\rfloor\;\le\; \sharp\mathrm{no}_p^Y\;\le\;n_p\cdot\sum_{(j,j')}\Bigl\lfloor\frac{-\chi(Y)}{2}{(p\lambda_{j'}-\lambda_{j})}\Bigr\rfloor+r\,,
\]
where~$j$ and~$j'$ are related as in Corollary~\ref{cor:cong}, and~$\lfloor{x}\rfloor$ denotes the integral part of~$x\in\Q$.
\item Let~$g=2$, and let~$\lambda_2\le\lambda_1=1$ be the Lyapunov exponent associated to~$Y\hookrightarrow\mathcal{A}_2$. If~$p\not \in S$ is inert in~$F$, then 
\[
\Bigl\lfloor{(p-\lambda_2)\frac{-\chi(Y)}{2}}\Bigr\rfloor\;\le\;\sharp\mathrm{ss}_p^Y\;\le\;n_p\cdot\Bigl\lfloor{(p-1)(1+\lambda_2)\frac{-\chi(Y)}{2}}\Bigr\rfloor\+r\,\,
\]
and~$\sharp\mathrm{sp}_p^Y\le \lfloor{(p-\lambda_2)\frac{-\chi(Y)}{2}}\rfloor+r$.

If~$p$ is split in~$F$, then
\[
\Bigl\lfloor{(p-1)\frac{-\chi(Y)}{2}}\Bigr\rfloor\;\le\;\sharp\mathrm{no}_p^Y\;\le\;n_p\cdot\Bigl\lfloor{(p-1)(1+\lambda_2)\frac{-\chi(Y)}{2}}\Bigr\rfloor\+r\,\,
\]
and~$\sharp\mathrm{ss}_p^Y\le \lfloor{\lambda_2(p-1)\frac{-\chi(Y)}{2}}\rfloor+r$.
\end{enumerate}
\end{corollary}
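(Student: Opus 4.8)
The plan is to derive both inequalities from the degree formula of Theorem~\ref{thm:deg} combined with the geometric meaning of the polynomials $\alpha_{p,j}$ worked out in the proof of Corollary~\ref{cor:cong}. I would choose a Hauptmodul $J$ with its pole at a cusp, which is legitimate because the cardinality of the non-ordinary locus is intrinsic and because, by Remark~\ref{rmk:Haupt}, cusps are never non-ordinary, so no non-ordinary fiber is hidden at the pole of $J$. Recall from~\eqref{eq:ap} that over each prime $\fp\mid p$ of $\mathcal{O}_K$ the polynomial $\alpha_{p,j}(J)$ factors as $\hat{g}_{j,\fp}(J)^{N}$ times the elliptic contribution $\prod_i(J-J(e_i))^{(N/n_i)\epsilon_\fp(j,e_i)}$, and that the non-ordinary locus of $Y\bmod p$ is the union over all $j\in\{1,\dots,g\}$ and all $\fp\mid p$ of the zeros of $\hat{g}_{j,\fp}$ together with the elliptic points $e_i$ for which $\epsilon_\fp(j,e_i)\neq0$. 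Dividing the identity of Theorem~\ref{thm:deg} by $N$ gives $\tfrac{-\chi(Y)}{2}(p\lambda_{j'}-\lambda_j)=\deg\hat{g}_{j,\fp}+\sum_i\tfrac{1}{n_i}\epsilon_\fp(j,e_i)$, so that $\deg\hat{g}_{j,\fp}$ equals this rational number minus a fractional elliptic correction whose terms lie in $(0,1]$ (the elliptic exponents of Lemma~\ref{lem:Rs} are Lyapunov quantities in $(0,1]$).

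For the lower bound I would keep only the single pair $(j,\fp)$ that realizes $\max_{(j,j')}\{p\lambda_{j'}-\lambda_j\}$. Since the non-ordinary divisor is reduced by the result of~\cite{AG} invoked in Corollary~\ref{cor:cong}, the factor $\hat{g}_{j,\fp}$ is squarefree and so contributes $\deg\hat{g}_{j,\fp}$ distinct geometric points; adjoining the non-ordinary elliptic points — each counted once, which compensates the fractional terms subtracted above since each is at most $1$ — recovers at least $\tfrac{-\chi(Y)}{2}(p\lambda_{j'}-\lambda_j)$ points, hence at least the floor of the maximum. For the upper bound I would instead sum over all $j$ and all $\fp\mid p$: the non-elliptic zeros contribute $\deg\hat{g}_{j,\fp}\le\lfloor\tfrac{-\chi(Y)}{2}(p\lambda_{j'}-\lambda_j)\rfloor$ (an integer bounded by the rational degree), there are $n_p$ primes $\fp$, each giving the same degree, and the elliptic points contribute at most $r$ in total, which yields $n_p\sum_{(j,j')}\lfloor\tfrac{-\chi(Y)}{2}(p\lambda_{j'}-\lambda_j)\rfloor+r$.

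For part~(2) I would specialize to $g=2$ with $\lambda_1=1$ and use the explicit index relation of Example~\ref{ex:hgD}: the map $j\mapsto j'$ fixes both indices when $p$ splits and interchanges them when $p$ is inert. In the inert case the two normalized degrees are $\tfrac{-\chi}{2}(p-\lambda_2)$ and $\tfrac{-\chi}{2}(p\lambda_2-1)$, with maximum the former and sum $\tfrac{-\chi}{2}(p-1)(1+\lambda_2)$; substituting into part~(1) and invoking Bachmat--Goren~\cite{BG} (as in Corollary~\ref{cor:calc}) to identify the non-ordinary and supersingular loci gives the displayed inequalities for $\sharp\mathrm{ss}_p^Y$. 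The split case is identical with degrees $\tfrac{-\chi}{2}(p-1)$ and $\tfrac{-\chi}{2}\lambda_2(p-1)$ and with the supersingular locus replaced by the superspecial one. In both cases the superspecial bound comes from the fact that superspecial points are the common zeros of the two factors $\hat{g}_{j,\fp}$ — the intersection of the two components of the non-ordinary locus — and is therefore controlled by the relevant single-component degree together with the $r$ elliptic points.

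The main obstacle will be the exact bookkeeping of the elliptic contributions: the fractional exponents $\epsilon_\fp(j,e_i)/n_i$ must be reconciled with counting each non-ordinary elliptic point exactly once, and it is precisely this mismatch that produces both the floor functions and the additive term $r$. A secondary difficulty is that geometric point counts have to be organized over each residue field $\mathcal{O}_K/\fp$ separately — this is the source of the factor $n_p$ in the upper bounds — while the lower bound must rely only on the distinct roots guaranteed by squarefreeness, so that no coincidence of zeros across the index $j$ can drop the count below the single maximal degree.
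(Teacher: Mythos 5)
Your proposal is correct and follows essentially the same route as the paper: bounding the non-ordinary locus by the zeros of the factors $\hat{g}_{j,\fp}$ of $\alpha_{p,j}$ via Theorem~\ref{thm:deg}, using reducedness of the non-ordinary divisor (\cite{AG}) for the lower bound, and invoking Bachmat--Goren for the $g=2$ specialization. Your bookkeeping of the elliptic corrections and floor functions is in fact somewhat more explicit than the paper's own (terser) argument; the only slip is the phrase ``each giving the same degree'' for the primes $\fp\mid p$ --- the degrees $\deg\hat{g}_{j,\fp}$ may differ across $\fp$, but each is bounded by the same floor, which is all the upper bound needs.
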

\begin{proof}
The size of the non-ordinary locus of~$\mathcal{X}\to Y$ modulo~$p$ is bounded by the sum of the sizes of its components, indexed by~$j\in\{1,\dots,g\}$. The size of each component is bounded by the number of roots of the polynomial~$\alpha_{p,j}(J)$. 
In order to estimate this number, let~$\fp\subset\mathcal{O}_K[S^{-1}]$ be a prime over~$p$, and consider the non-ordinary points of $Y$ mod~$\fp$ outside the elliptic locus, that is, the roots of the polynomial~$\widehat{g}_{j,\fp}$ in~\eqref{eq:ap}. Theorem~\ref{thm:deg} implies that~$\deg(g_{j,\fp})\le \lfloor{\frac{-\chi(Y)}{2}(p\lambda_{j'}-\lambda_j)}\rfloor$. The size of the~$j$-th component of the non-ordinary locus of~$Y$ mod~$p$ is bounded by $n_p\cdot\lfloor{\frac{-\chi(Y)}{2}(p\lambda_{j'}-\lambda_j)}\rfloor+r$, since for every prime~$\fp$ above~$p$ the curve $Y\mod\fp$ has at most~$\deg(\widehat{g}_{j,\fp})$ non-ordinary points outside the elliptic locus, and at most~$r$ non-ordinary points coming from the~$r$ elliptic points. The upper bound in the statement is finally obtained by summing over all components. 

The lower bound corresponds to the case where the non-ordinary points in every residue field come from the same points of~$Y$, and all elliptic points have ordinary reduction. 

The statement for the~$g=2$ case follows from the general bound by noticing that~$\lambda_2\le\lambda_1=1$ and~$j'=j$ if~$p$ is split in~$F$, and~$j'=j+1$ is~$p$ is inert (with the convention that~$j+1=1$ if~$j=2$). It is then easy to compute the lower bound in Part one.  
The specialization to the supersingular locus follows from a result of Bachmat and Goren~\cite{BG}, who prove that for~$g=2$ the supersingular locus and the non-ordinary locus coincide if~$p$ is inert in~$F$, and that the supersingular locus coincide with the superspecial locus if~$p$ is split in~$F$ (recall that in any case the superspecial locus is given by the intersection of the components of the non-ordinary locus).
\end{proof}

\begin{remark}
The upper bound in Corollary~\ref{cor:size} is sharp. This follows immediately for~$K=\Q$ and~$Y$ torsion free, but it is true also in non-torsion-free cases. For instance consider~$Y\simeq\Po/\SL_2(\Z)$ considered as the~$J$-line, where~$J$ is the classical~$j$-invariant, and the prime~$p=11$. In this case~$n_p=1$, $\lambda_{j'}=\lambda_j=1$ and~$r=2$. The upper bound implies that there are at most~$\lfloor{\tfrac{10}{12}}\rfloor+2=2$ supersingular elliptic curves, corresponding to the elliptic points. From the classical result of Deruring, we know that the upper bound is attained. 
\end{remark}

\begin{example}[Torsion free modular curves]
\label{ex:mocu}
If~$Y$ has no elliptic points, by Theorem~\ref{thm:deg} it is possible to obtain the non-ordinary locus via truncation of solutions only if~$Y$ has at most four cusps. This is the case of Igusa's original discovery~\cite{IgLeg} for the Legendre family, where the base curve~$Y\simeq\Po/\Gamma(2)$ has three cusps and no torsion. 
According to Sebbar's classification~\cite{Se} of torsion-free genus zero congruence subgroups of~$\SL_2(\R)$, there are eight congruence subgroups that satisfy the assumption of Corollary~\ref{cor:calc} (some of them are related to Zagier's search~\cite{ZagierA}):
\[
\Gamma(2),\, \Gamma_0(4),\,\Gamma(3),\, \Gamma_0(4)\cap\Gamma(2),\,\Gamma_1(5),\,\Gamma_0(6),\,\Gamma_0(8),\,\Gamma_0(9)\,.
\]
As an example, consider the differential equation for~$\Gamma_1(5)$, which is related to Apéry's proof of irrationality of~$\zeta(2)$:
\[
t(t^2+11t-1)y''(t)\+(3t^2+22t-1)y'(t)+(t+3)y(t)\=0\,.
\] 
There is an explicit description of the normalized holomorphic solution~$y(t)=1+\sum_{n\ge1}{a_nt^n}$ in terms of binomial coefficients~$a_n=\sum_{k=0}^n{\binom{n}{k}^2\binom{n+k}{k}}$. This description, together with Corollary~\ref{cor:calc} and Beukers's interpretation of Apery's differential equation as a Picard-Fuchs differential equation for family of elliptic curves~\cite{BeIrr}, prove that the supersingular locus of the family
\[
y^2\=x^3+\frac{(t^4+6t+1)}{4}x^2+\frac{t(t+1)}{2}x+\frac{t^2}{4},\quad t\in\C\smallsetminus\biggl\{0,\frac{-11}{2}\pm\frac{5\sqrt{5}}{2}\biggr\}\,
\]
is given by the zeros of the simple polynomial
\[
1\+\sum_{n=1}^{p-1}\biggl(\sum_{k=0}^n{\binom{n}{k}^2\binom{n+k}{k}}\biggr)t^n\mod p\,.
\]
\end{example}

\begin{example}[Triangle curves]
\label{ex:deltat}
Let~$\Delta(n,m,\infty)$ for~$n\le m<\infty$ be a Fuchsian group with one cusp and elliptic points~$e_n$ and~$e_m$ of order~$n$ and~$m$ respectively. The Picard-Fuchs differential equations in this case are hypergeometric with rational parameters. The polynomial~$\alpha_{p,j}(J)$ of~Theorem~\ref{thm:deg} has the form, for~$N=\mathrm{lcm}(n,m)$,
\begin{equation}
\label{eq:apjt}
\alpha_{p,j}(J)=g_{p,j}(J)^N\cdot(J(e_n)-J)^{\epsilon_{p,n}\frac{N}{n}}\cdot  (J(e_m)-J)^{\epsilon_{p,m}\frac{N}{m}}\,.
\end{equation}
Let the indices~$j$ and~$j'$ be related as in Theorem~\ref{thm:deg}. Since~$-\chi(\Po/\Delta(n,m,\infty))=(mn-m-n)/(mn)$, by using Theorem~\ref{thm:deg} we see that 
\begin{equation}
\label{eq:dega}
\mathrm{deg}(\alpha_{p,j}(J))\=\frac{mn-m-n}{2mn}\cdot N\cdot (p\lambda_{j'}-\lambda_j)
\end{equation}
is in general not smaller than~$p$, as required by Corollary~\ref{cor:calc} (take for instance~$m$ and~$n$ coprime). This comes from the choice of Hauptmodul with pole at the unique cusp. 

Let us instead consider a Hauptmodul~$t$ with a pole at the elliptic point~$e_m$ of order~$m$ and a zero at the cusp. As in Remark~\ref{rmk:Haupt}, since~$t$ has not a finite value at~$e_m$, the factor~$t-t(e_m)$ can not appear in the in the solution in~$t=0$ of the Picard-Fuchs differential equations. It follows that we can replace~$N=\mathrm{lcm}(n,m)$ with~$N=n$ and the congruences~\eqref{eq:apj} for the holomorphic solutions of the Picard-Fuchs differential equation become
\[
y_j(t)^n\equiv g_{p,j}(t)^n\cdot (t(e_n)-t)^{\epsilon_{p,n}}\cdot y_{j'}(t)^{n\cdot p}\mod p,\quad j,j'\in\{1,\dots,g\}\,,
\]
where~$g_{p,j}(t)$, and~$\epsilon_{p,n}$ are as in~\eqref{eq:apjt}.   
In other words, we lose the description of~$e_m$ as a point of non-ordinary reduction, but the degree of the polynomial involved in the congruence above is smaller than the degree of~$\alpha_{p,j}(J)$; it follows in fact from the above computation of~$\mathrm{deg}(\alpha_{p,j})$ that
\begin{equation}
\label{eq:sizet}
\begin{aligned}
\mathrm{deg}( g_{p,j}(t)^n\cdot (t(e_n)-t)^\epsilon_{p})&\=\frac{-\chi(\Po/\Delta(n,m,\infty))}{2}\cdot n\cdot (p\lambda_{j'}-\lambda_j)\-\frac{n\cdot\epsilon_{p,m}}{m}\\
&\=\frac{(mn-m-n)}{2m}\cdot (p\lambda_{j'}-\lambda_j)\-\frac{n\cdot\epsilon_{p,m}}{m}\,.
\end{aligned}
\end{equation}
Since~$\lambda_j\le 1$ for every~$j=1,\dots,g$ and~$m\ge n$, the first summand above is smaller than~$p$ for infinitely many choices of~$(n,m)$ with~$n\le m$, for instance for all choices with~$n\le 4$ and~$m\ge n$ arbitrary, which include the family of Hecke groups~$\Delta(2,m,\infty)$. For this choice of Hauptmodul, a version of Corollary~\ref{cor:calc} tells that by truncation of (a power of) the holomorphic solutions of the Picard-Fuchs equations (in $t$) we can determine the non-ordinary locus outside the elliptic point~$e_m$. 

In order to complete the study of the non-ordinary locus, we recover~$\alpha_{p,j}(J)$ by multiplying a power of the truncated solution by a suitable power of~$J$
\[
\alpha_{p,j}(J)\=J^{\mathrm{deg}(\alpha_{p,j})}\cdot\Bigl(\bigl[y_j(t)^n\bigr]_p\Bigl)^m\,,\quad t=\frac{1}{J}
\] 
It finally follows, similarly to~Corollary~\ref{cor:calc}, that the zeroes of
\begin{equation}
\label{eq:degt}
\mathrm{lcm}_{j=1,\dots,g}\Bigl\{J^{\mathrm{deg}(\alpha_{p,j})}\cdot\Bigl(\bigl[y_j(t)^n\bigr]_p\Bigr)^m\Bigr\}\,,
\end{equation}
correspond to the fibers of non-ordinary reduction of the integral model of~$\Po/\Delta(n,m,\infty)$. Th degree~$\mathrm{deg}(\alpha_{p,j})$ is explicit and computed in~\eqref{eq:dega}. By replacing~$\mathrm{lcm}$ with~$\mathrm{gcd}$ one finds the superspecial locus.

As a concrete example, we determine the supersingular locus of the curve~$\Po/\Delta(2,5,\infty)$, whose two Picard-Fuchs equations are described in~Example~\ref{ex:hgD}. The associated family of abelian surfaces is explicitly described in~\eqref{eq:ext} and is embedded in the Hilbert modular variety of real multiplication field~$F=\mathbb{Q}(\sqrt{5})$. An prime~$p\in\Z$ unramified in~$F$ can only be inert or (totally) split in~$F$. In particular, the following holds for the indices~$j,j'\in\{1,2\}$:
\[
j'=\begin{cases}
2 & \text{if }j=1\\
1 & \text{if }j=2
\end{cases}\quad \text{ for $p$ inert,}\quad
j'=\begin{cases}
1 & \text{if }j=1\\
2 & \text{if }j=2
\end{cases}\quad \text{ for $p$ split.}
\]
The Lyapunov exponents for this triangle curve are~$\lambda_1=1$ and~$\lambda_2=\frac{1}{3}$, and the orbifold Euler characteristic is~$\chi(\Po/\Delta(2,5,\infty))=-3/10$. Let~$J$ be a Hauptmodul with a pole at~$i\infty$, and such that~$J(e_2)=1$ and~$J(e_5)=0$, and let~$t:=J^{-1}$. Then
\[
\mathrm{deg}(\alpha_{p,1})\=\frac{3}{20}\cdot 10\cdot\Bigl(\frac{p}{3}-1\Bigr)\=\frac{p-3}{2}\,,\quad \mathrm{deg}(\alpha_{p,2})\=\frac{3}{20}\cdot 10\cdot\Bigl(p-\frac{1}{3}\Bigr)\=\frac{3p-1}{2}
\]
if~$p$ is inert in~$F$, and
\[
\mathrm{deg}(\alpha_{p,1})\=\frac{3}{20}\cdot 10\cdot(p-1)\=\frac{3(p-1)}{2}\,,\quad \mathrm{deg}(\alpha_{p,2})\=\frac{3}{20}\cdot 10\cdot\Bigl(\frac{p}{3}-\frac{1}{3}\Bigr)\=\frac{p-1}{2}
\]
if~$p$ is split in~$F$. It follows from~\eqref{eq:degt} and Bachmat-Goren's result~\cite{BG} the supersingular locus of~$\Po/\Delta(2,5,\infty)$ modulo~$p$ is given by

\[
\mathrm{lcm}\biggl(J^{\frac{p-3}{2}}\biggr[{}_2F_1\Bigl(\frac{3}{20}, \frac{7}{20};1;\frac{1}{J}\Bigr)^2\biggl]^5_p,J^{\frac{3p-1}{2}}\biggl[{}_2F_1\Bigl(\frac{1}{20}, \frac{9}{20};1;\frac{1}{J}\Bigr)^2\biggr]^5_p\biggr)\mod p
\]
if~$p$ is inert in~$F$, and 
\[
\mathrm{lcm}\biggl(J^{\frac{3(p-1)}{2}}\biggr[{}_2F_1\Bigl(\frac{3}{20}, \frac{7}{20};1;\frac{1}{J}\Bigr)^2\biggl]^5_p,J^{\frac{p-1}{2}}\biggl[{}_2F_1\Bigl(\frac{1}{20}, \frac{9}{20};1;\frac{1}{J}\Bigr)^2\biggr]^5_p\biggr)\mod p
\]
if~$p$ is split in~$F$.
\end{example}

\section*{Appendix. Families of Jacobians of genus two curves and absolute Igusa invariants}
Jacobians of genus 2 curves are classified, over every field, by the value of three invariants~$(J_1,J_2,J_3)$ called~\emph{absolute Igusa invariants}~\cite{Ig}. It is therefore natural and desirable to describe the non-ordinary, supersingular, and superspecial locus of a family of Jacobians of genus two curves in terms of these invariants. The general idea is simple. 

The graded ring of holomorphic Siegel modular forms for~$\mathrm{Sp}_2(\Z)$ is~$\C[\psi_4,\psi_6,\chi_{10},\chi_{12}]$, where~$\psi_k$ is the normalized Siegel Eisenstein series of weight~$k\ge4$ and~$\chi_{10}$ and~$\chi_{12}$ are Siegel cusp forms of weight~$10$ and~$12$ respectively. The absolute Igusa invariants are the generators of the function field of the Siegel modular variety, and can be written as rational functions of the above Siegel modular forms. A choice is (see~\cite{LY})
\begin{equation}
\label{eq:Ig}
J_1\:=2\cdot3^5\frac{\chi_{12}^5}{\chi_{10}^6}\,\quad J_2\:=2^{-3}\cdot3^3\frac{\psi_4\chi_{12}^3}{\chi_{10}^4}\,,\quad J_3\:=2^{-5}\cdot3\biggl(\frac{\psi_6\chi_{12}^2}{\chi_{10}^3}+2^3\cdot3\frac{\psi_4\chi_{12}^3}{\chi_{10}^4}\biggr)\,.
\end{equation}
For an affine Kobayashi curve~$Y\hookrightarrow\mathcal{M}_F(\C)$ in a Hilbert modular surfaces we have a chain of embeddings
\[
Y\hookrightarrow\mathcal{M}_F(\C)\hookrightarrow\mathcal{A}_2\,,
\]
where~$\mathcal{A}_2$ is the Siegel modular threefold. One can first pull-back the Igusa invariants along the map~$\mathcal{M}_F(\C)\hookrightarrow\mathcal{A}_2$, and then pull them back to~$Y$ by using the modular embedding maps. Since the dimension of the function fields has dropped from three to one, on~$Y$ the Igusa invariants are no longer algebraically independent; the genus zero assumption implies moreover that every Igusa invariant is a rational function of a Hauptmodul~$J$ of~$Y$. 
It finally follows that the values of the absolute Igusa invariants of the non-ordinary, supersingular, and superspecial reduction of~$Y$ modulo~$p$ can be computed from the non-ordinary, supersingular, and superspecial polynomial in~$J$ respectively.
We work out the details for the curve~$\Po/\Delta(2,5,\infty)\hookrightarrow X^-_5:=\Po^2/\SL_2(\mathcal{O}_5^\vee\oplus\mathcal{O}_5)$. 

\begin{lemma} 
\label{lem:Igusa}
Let~$J$ be a Hauptmodul for~$\Delta(2,5,\infty)$ normalized to have a pole at the cusp infinity and~$J(e^{2\pi i/5})=0$ and~$J(i)=1$. 
Then the restriction of the absolute Igusa invariants~$J_1,J_2,J_3$ on~$\Po/\Delta(2,5,\infty)$ is given by
\[
J_1\Bigl|_{W_5}\=2^{-7}7^5J^2\,,\quad J_2\Bigl|_{W_5}\=2^{-7}7^3J^2\,,\quad J_3\Bigl|_{W_5}=2^{-7}3^{-1}7^2J(5J-2^43^3\sqrt{5})\,.
\]
\end{lemma}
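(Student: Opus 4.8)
The plan is to carry out the two-step restriction indicated above: first pull the invariants $J_1,J_2,J_3$ back from $\mathcal{A}_2$ to the Hilbert modular surface $X_5^-=\Po^2/\SL_2(\mathcal{O}_5^\vee\oplus\mathcal{O}_5)$, and then along the modular embedding $\varphi=(\varphi_1,\varphi_2)$ to the curve $W_5=\Po/\Delta(2,5,\infty)$. Since each $J_i$ is a weight-zero ratio of Siegel modular forms, it is a modular function on $\mathcal{A}_2$; as $W_5$ has genus zero, its restriction is a rational function of the Hauptmodul $J$, and the whole problem reduces to identifying these three rational functions. On a genus-zero curve such a function is determined by its divisor together with a single normalization, so the strategy splits into a divisor computation and a constant computation.

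First I would determine the divisors of $\psi_4,\psi_6,\chi_{10},\chi_{12}$ along $W_5$. The key form is the Igusa cusp form $\chi_{10}$, which vanishes exactly on the locus of decomposable principally polarized abelian surfaces; intersecting this locus with $W_5$ picks out precisely the fibres of the family~\eqref{eq:ext} whose Jacobian degenerates, i.e.\ the cusp $\eta^2=1$. The orders of $\psi_4,\psi_6,\chi_{12}$ at the cusp are read off from their $q$-expansions, while their orders at the two elliptic points of $\Delta(2,5,\infty)$ are dictated by the extra automorphisms of the special fibres $y^2=x^5-1$ (the order-five point, $\eta=\infty$) and $y^2=x(x^4-5x^2+5)$ (the order-two point, $\eta=0$). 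Since $J$ has its only pole at the cusp and its zero at the order-five point, assembling these orders shows that $J_1|_{W_5}$ and $J_2|_{W_5}$ both have divisor $2\cdot(\text{order-five point})-2\cdot(\text{cusp})$, hence are constant multiples of $J^2$, whereas $J_3|_{W_5}$ is a degree-two polynomial in $J$ with a zero at $J=0$.

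To pin down the remaining constants I would use the explicit uniformizing data. By the definition of twisted modular forms (Section~\ref{sec:modde}), the restriction to $W_5$ of a symmetric Hilbert modular form for $\Q(\sqrt5)$ is a twisted modular form, and by Example~\ref{ex:mft} it is a polynomial in the generators $Q_1,Q_2$ and $B$; their $J$-expansions at the cusp are governed by the hypergeometric series of Example~\ref{ex:hgD}, which are integral by Theorem~\ref{thm:intmf}. Writing the restrictions of $\psi_4,\psi_6,\chi_{10},\chi_{12}$ as the appropriate polynomials in a set of Gundlach generators of the symmetric Hilbert modular forms for $\Q(\sqrt5)$, fixing the coefficients of these polynomials by matching finitely many Fourier coefficients, and substituting the $J$-expansions of $Q_1,Q_2,B$ then produces the leading constants of $J_1|_{W_5}$ and $J_2|_{W_5}$ and both the leading constant and the nonzero root of $J_3|_{W_5}$.

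The hard part is the faithful bookkeeping of these normalization constants, and above all accounting for the factor $\sqrt5$ in $J_3$. One expects this factor to reflect the arithmetic of the real multiplication field rather than the family over $\Q$: the inverse different $\mathcal{O}_5^\vee$ in $\SL_2(\mathcal{O}_5^\vee\oplus\mathcal{O}_5)$ and the conjugate embedding $\sigma_2$ entering $\varphi_2$ introduce $\sqrt5$ into the Fourier expansions of the Hilbert modular forms along $X_5^-$. Since $J_1|_{W_5}$ and $J_2|_{W_5}$ come out with rational coefficients, one must track the $\sqrt5$ with care through both the Siegel-to-Hilbert and the Hilbert-to-curve restrictions and verify precisely how it survives in the asymmetric combination defining $J_3$; this, rather than the divisor computation of the second paragraph, is where essentially all the work lies.
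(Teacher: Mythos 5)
Your outline reproduces the paper's overall two-step strategy (pull back from $\mathcal{A}_2$ to the Hilbert modular surface, then along the modular embedding to $W_5$, and use genus zero to reduce to identifying rational functions of $J$), and your divisor analysis does correctly predict the \emph{shapes} of the three restrictions. But the proposal has a genuine gap: every quantitative ingredient of the lemma --- the constants $2^{-7}7^5$ and $2^{-7}7^3$, and the factor $5J-2^43^3\sqrt{5}$ --- is deferred to a Fourier-coefficient-matching computation that you explicitly do not carry out (``this \dots is where essentially all the work lies''). Since the lemma \emph{is} these constants, what you have written is a plan, not a proof; divisor bookkeeping can never close it, because divisors determine each restriction only up to precisely the constants being asserted, and for $J_3$ (a sum of two monomials in the Siegel forms) they do not even determine the second root. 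Concretely, the paper finishes by three steps your sketch leaves abstract: (i) it quotes Resnikoff~\cite{Re}, who expressed $\psi_4,\psi_6,\chi_{10},\chi_{12}$ restricted to $X_5=\Po^2/\SL_2(\mathcal{O}_5)$ as explicit polynomials in Gundlach's generators $g_2,s_6,s_{10}$ (this is exactly what your ``matching finitely many Fourier coefficients'' would have to re-derive); (ii) it passes from $X_5$ to $X_5^-$, where $W_5$ actually lies, via the isomorphism $(z_1,z_2)\mapsto(\varepsilon z_1,\varepsilon^\sigma z_2)$ for a unit $\varepsilon$ of negative norm --- this is the precise resolution of the inverse-different/$\sqrt{5}$ issue you flag only qualitatively; (iii) it computes, by the restriction method of~\cite{MZ}, Section~8.1, the identities $g_2|_{W_5}=f_1^2$, $s_6|_{W_5}=\tfrac{\sqrt{5}}{2}f_1s_5$, $s_{10}|_{W_5}=s_5^2$ in the ring $\C[f_1,s_5]$ with $J=f_1^5/s_5$. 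All three stated formulas then follow by substitution, and the $\sqrt{5}$ in $J_3$ enters solely through $s_6|_{W_5}$; without step (iii) (or an equivalent computation in your generators $Q_1,Q_2,B$, which would itself require the same machinery to expand the restricted Hilbert forms at the cusp) the argument cannot produce a single one of the claimed numbers.

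Two smaller inaccuracies in the part you do argue. First, the affine curve $W_5$ does not meet the vanishing locus of $\chi_{10}$ (the locus of decomposable abelian surfaces) at all, since every fiber of the family~\eqref{eq:ext} is the Jacobian of a smooth genus-two curve; the vanishing of $\chi_{10}|_{W_5}$ is entirely at the cusp (indeed $\chi_{10}|_{W_5}=s_5^2/4$ has a double zero there), so your phrase ``picks out precisely the fibres whose Jacobian degenerates, i.e.\ the cusp'' conflates the Humbert surface with the boundary, even though the conclusion happens to be right; pinning down the multiplicity still requires the explicit computation. Second, reading off vanishing orders of $\psi_4,\psi_6,\chi_{12}$ at the elliptic points from automorphisms of the special fibres only bounds the orders from below; to get equalities (needed for your divisor counts) one again ends up computing the restrictions explicitly, which returns you to the missing step above.
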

\begin{proof}
Resnikoff~\cite{Re} computed the pull-back of the holomorphic Siegel modular forms to the Hilbert modular surface~$X_5:=\Po/\SL_2(\mathcal{O}_5)$. It involves only symmetric Hilbert modular forms of even parallel weight, whose space had been previously determined by Gundlach~\cite{Gu}:
\[
M_{2*}^{\text{sym}}(\SL_2(\mathcal{O}_5))=[g_2,s_6,s_{10}]\,,
\]
where~~$g_k$ denotes the Eisenstein series of parallel weight~$k$ whose~$q$-expansion at~$\infty$ has constant term~$1$, and
\begin{align*}
s_6&=\frac{67}{2^5\cdot3^3\cdot 5^2}\bigl(g_2^3-g_6\bigr) \,,\\
s_{10}&=\frac{1}{2^{10}\cdot3^5\cdot5^5\cdot7}\bigl(2^2\cdot3\cdot7\cdot4231 g_2^5-5\cdot67\cdot2293g_2^2g_6+412751g_{10}\bigr) 
\end{align*}
are cusp forms of weight~$6$ and~$10$ respectively. Resinkoff's result is then
\begin{equation}
\label{eq:res}
\psi_4\bigl|_{X_5}\= g_2^2\,,\quad\psi_6\bigl|_{X_5}\=g_2^3-2^53^3s_6\,,\quad \chi_{10}\bigl|_{X_5}\=\frac{s_{10}}{4}\,,\quad
\chi_{12}\bigl|_{X_5}\=\frac{3s_6^2-2g_2s_{10}}{12}\,.
\end{equation}
We have to compute the pull-back of the Hilbert modular forms on~$X_5$ to the curve~$W_5$. One has to be careful here, since~$W_5$ lies not in~$X_5$ but in a different Hilbert modular surface~$X_5^-$. Luckily, these are isomorphic, the isomorphism being the map~$X_5\to X^-_5$ given by~$(z_1,z_2)\mapsto(\varepsilon z_1,\varepsilon^\sigma z_2)$, where~$\epsilon\in\mathcal{O}_5$ is a unit of negative norm. One has to compose the modular embedding of~$W_5$ in~$X_5^-$ with this isomorphism to get the correct restriction of Hilbert modular forms. The restriction is computed by using the method of~\cite{MZ}, Section~8.1, which we do not repeat here. We use moreover the fact that the space of twisted modular forms for~$W_5$ are the polynomial ring~$\C[f_1,s_5]$, where~$f_1$ is of parallel weight~$1$ and~$s_5$ is a cusp form of parallel weight~$5$ (see the appendix of~\cite{BN} for a proof). Moreover the Hauptmodul~$J$ is given by~$f_1^5/s_5$. It follows that 
\[
g_2\bigl|_{W_5}=f_1^2\,,\quad s_6\bigl|_{W_5}=\frac{\sqrt{5}}{2}f_1s_5\,,\quad s_{10}\bigl|_{W_5}=s_5^2\,.
\]
By combining these identities with~\eqref{eq:res} and~\eqref{eq:Ig}, one gets the statement.
\end{proof}
Thanks to lemma, for a given~$p$ one can compute the absolute Igusa invariants of the supersingular fibers of the family over~$\Po/\Delta(2,5,\infty)$ from the supersingular values of the Hauptmodul~$J$. As an example, from the computations of the previous section we find the following supersingular values for~$p>7$.
\begin{table}[h!]
\[
\begin{array}{c|cccc}
p & 11 & 13 & 17 & 19 \\
\hline 
(J_1,J_2,J_3)& (3,5,8+4\sqrt{5}) & (0,0,0) & (0,0,0) & (0,0,0) \\
& & (1,4,9+5\sqrt{5}) & (5,6,16+15\sqrt{5}) & (13,15,9(1+\sqrt{5})) \\  
& & (3,12,1+6\sqrt{5}) & (3,7,13(1+\sqrt{5})) &  \\
 & & & (6,14,9+7\sqrt{5}) &  \\
\end{array}\,.
\]
\end{table}
\bibliography{Int}{}
\bibliographystyle{plain}

\bigskip
\noindent
UNIVERSITÄT BIELEFELD, GERMANY\\
\textit{Email address}: \texttt{gbogo@math.uni-bielefeld.de}

\end{document}